\newcommand{\bmw}{\bm{w}}
\newcommand{\bmwt}{\bm{w_t}}
\newcommand{\bmws}{\bm{w_s}}
\newcommand{\bmp}{\bm{p}}
\newcommand{\bmx}{\bm{x}}
\newcommand{\bmy}{\bm{y}}
\newcommand{\bmomega}{\bm{\omega}}
\newcommand{\bmc}{\bm{x_t}}
\newcommand{\bmn}{\bm{x_{t+1}}}
\newcommand{\bmh}{\bm{\hat x}}
\newcommand{\bmo}{\bm{x^*}}
\newcommand{\grad}{\nabla}
\DeclareMathOperator*{\argmin}{argmin}
\DeclareMathOperator*{\regret}{regret}
\DeclareMathOperator*{\violation}{violation}
\newcommand{\bbE}{\mathbb{E}}
\newcommand{\bbP}{\mathbb{P}}
\newtheorem{assumption}{Assumption}
\newtheorem{definition}{Definition}
\newtheorem{theorem}{Theorem}
\newtheorem{lemma}{Lemma}[section]
\newtheorem{remark}[lemma]{Remark}
\patchcmd\maketitle{\def\@makefnmark{\rlap{\@textsuperscript{\normalfont\@thefnmark}}}}{}{}{}
\def\thanksAAffil#1{%
  \footnotemarkAAffil\protected@xdef\@thanks{\@thanks%
        \protect\footnotetextAAffil[\the \c@footnoteAAffil]{#1}}%
}
\def\thanksANote#1{%
  \footnotemarkANote%
  \protected@xdef\@thanks{\@thanks%
        \protect\footnotetextANote[\the \c@footnoteANote]{#1}}%
}
\title{Projection-Free Online Convex Optimization with Stochastic Constraints}
\author{
	Duksang Lee
	\thanksAAffil{Department of Industrial and Systems Engineering, KAIST, Daejeon 34141, Republic of Korea}
	\and
	Nam Ho-Nguyen
	\thanksAAffil{Discipline of Business Analytics, The University of Sydney, Sydney, NSW 2006, Australia}
	\and
	Dabeen Lee
	\FootnotemarkAAffil{1}$~$
    \thanksANote{Correspondence to <\url{dabeenl@kaist.ac.kr}>}
}	
\date{\today}
\begin{document}

\maketitle

\begin{abstract}
    This paper develops projection-free algorithms for online convex optimization with stochastic constraints. We design an online primal-dual projection-free framework that can take any projection-free algorithms developed for online convex optimization with no long-term constraint. With this general template, we deduce sublinear regret and constraint violation bounds for various settings. Moreover, for the case where the loss and constraint functions are smooth, we develop a primal-dual conditional gradient method that achieves $O(\sqrt{T})$ regret and $O(T^{3/4})$ constraint violations. Furthermore, for the setting where the loss and constraint functions are stochastic and strong duality holds for the associated offline stochastic optimization problem, we prove that the constraint violation can be reduced to have the same asymptotic growth as the regret.
\end{abstract}

\section{Introduction}

\emph{Online convex optimization (OCO)} is a widely used  framework for decision-making under uncertainty. In OCO, the decision-maker attempts to minimize a sequence of convex loss functions chosen by the adversarial environment. At each iteration, the decision-maker chooses a decision without knowing the loss function, after which the associated loss is revealed by the environment. Based on these repeated interactions, the decision-maker adapts to the environment so as to minimize the total cumulative loss. As the description suggests, the OCO framework is useful for designing iterative solution methods for optimizing a complex system even under limited information.

OCO with \emph{long-term constraints}~\citep{long-term-1} extends the OCO framework. When the problem involves complex functional constraints, projection onto the feasible set can be difficult. For such scenarios, an alternate way is to take and aggregate the constraint functions and then require satisfying the aggregated constraint in the long run. For resource planning, there is a budget for each period, but budgets can be pooled over multiple time periods, over which resource allocation is flexible. The framework is given as the following online optimization model. 
\begin{equation}\label{coco}
    \min_{\bm{x_1},\ldots,\bm{x_T}\in\mathcal{X}}\quad \sum_{t=1}^Tf_t(\bmc)\quad \text{s.t.}\quad \sum_{t=1}^T g_t(\bmc)\leq 0
\end{equation}
where $\{f_t\}_{t=1}^T$ and $\{g_t\}_{t=1}^T$ are the convex loss and constraint functions over $T$ time periods. Here, the decision-maker selects $\bmc$ from a domain $\mathcal{X}$ based on the history up to time step $t$ before observing $f_t$ and $g_t$. The setting where the constraint functions $g_1,\ldots, g_T$ in~\eqref{coco} are independent and identically  distributed (i.i.d.) with an unknown probability distribution is referred to as OCO with \emph{stochastic constraints}~\citep{OCO-stochastic}. 

The existing algorithms for OCO with long-term constraints and stochastic constraints, however, still require projections onto the domain $\mathcal{X}$. The online primal-dual augmented Lagrangian algorithm~\citep{long-term-1,long-term-2} and the drift-plus-penalty algorithm~\citep{long-term-3,yu-neely} provide the two main algorithmic frameworks for OCO with long-term constraints, and both are variants of the primal-dual projected gradient method, requiring projection onto the domain at each iteration. When $\mathcal{X}$ is given by linear inequalities, projection onto it boils down to solving a quadratic program in the general case. Matrix completion for recommender systems requires projection onto a spectahedron~\citep{hazan-projection-free}. 

There has been a surge of interest in projection-free algorithms based on the famous Frank-Wolfe method, replacing each projection step by a linear optimization over $\mathcal{X}$~\citep{hazan-projection-free}. \cite{conservative} developed a projection-free algorithm for stochastic optimization with stochastic constraints. However, as far as we know, no projection-free algorithm exists for OCO with long-term constraints. Motivated by this, we develop projection-free algorithms for online convex optimization with stochastic constraints. 

\paragraph{Our Contributions}
\begin{enumerate}
    \item We design an online primal-dual projection-free learning framework for OCO with stochastic constraints. The framework works as a general template, and it can take any projection-free algorithm developed for OCO with no long-term constraint. In particular, we apply the framework to different settings depending on (1) whether the loss and constraint functions are smooth or not and (2) whether the loss functions are arbitrary or stochastic. We provide sublinear regret and constraint violation bounds for various settings.

    \item For the case where the loss and constraint functions are smooth, we develop Primal-Dual Meta-Frank-Wolfe, which is a variant of the Meta-Frank-Wolfe algorithm of~\citep{meta-frank-wolfe}. Subject to a per-iteration cost of $O(\sqrt{T})$, the algorithm guarantees $O(\sqrt{T})$ regret and $O(T^{3/4})$ constraint violation.

    \item When the loss functions are also stochastic and strong duality of the associated offline stochastic optimization problem is satisfied, we prove that the algorithms achieve smaller constraint violations. More precisely, we may reduce the constraint violation  to have the same asymptotic growth as the regret if the strong duality assumption holds and the loss functions are i.i.d. with a probability distribution.
\end{enumerate}
Our results are summarized in \Cref{results1}.
\begin{table}        
\begin{center}
\begin{tabular}{c|c|c|c|c|c}
\toprule
& {Functions} &{Setting} & {Regret}  &  Constraint violation& Per-round cost\\
\hline
\multirow{5}{*}{Alg. \ref{alg1}} & Non-smooth  & Adversarial & $T^{5/6+\beta}$ & $T^{11/12-\beta/2}$ &  1 \\
& Non-smooth  & Stochastic, SD & $T^{5/6}$ & $T^{5/6}$ &  1 \\
& Smooth & Adversarial & $T^{4/5+\beta}$ & $T^{9/10-\beta/2}$ &  1\\
& Smooth & Stochastic & $T^{3/4+\beta}$ & $T^{7/8-\beta/2}$ &  1\\
& Smooth & Stochastic, SD & $T^{3/4}$ & $T^{3/4}$ &  1\\
\midrule
\multirow{2}{*}{Alg. \ref{alg1}} & Non-smooth & Adversarial &  $T^{3/4+\beta}$ & $T^{7/8-\beta/2}$ &  $T$ \\
& Non-smooth & Stochastic, SD &  $T^{3/4}$ & $T^{3/4}$ &  $T$ \\
\midrule
\multirow{2}{*}{Alg. \ref{alg2}} & Smooth & Adversarial & $T^{1/2+\beta}$ & $T^{3/4-\beta/2}$ &  $\sqrt{T}$ \\
& Smooth & Stochastic, SD & $T^{1/2}$ & $T^{1/2}$ &  $\sqrt{T}$ \\
\bottomrule
\end{tabular}
\end{center}
\caption{Regret and constraint violation bounds for various settings. Here SD means strong duality holds for the associated stochastic optimization problem; see \cref{sec:stochastic-loss}.}\label{results1}
\end{table}
In column ``Setting", ``Adversarial" means that the loss functions are adversarially chosen as the standard OCO framework, and ``Stochastic" means that the loss functions are i.i.d. with an unknown probability distribution.

\section{Related Work}

Our work is related to the literature on online convex optimization with long-term constraints and the projection-free online learning and optimization literature.

\paragraph{OCO with Long-Term Constraints} The most general setting is where $\{g_t\}_{t=1}^T$ is adversarially chosen and the benchmark $\bmo$ is set to an optimal fixed solution of~\eqref{coco}. However, for the general setting, it is known that it is impossible to simultaneously bound the regret and constraint violation by a sublinear function in $T$~\citep{mannor-coco}. \citet{long-term-1} then considered the special case where $g_t=g$ for some fixed function $g$ for all $t\in[T]$, and they provided an augmented-Lagrangian-based algorithm that achieves $O(\sqrt{T})$ regret and $O(T^{3/4})$ constraint violation. \citet{long-term-2} modified the algorithm to obtain $O(T^{\max\{\beta,1-\beta\}})$ regret and $O(T^{1-\beta/2})$ constraint violation where $\beta\in(0,1)$ is an algorithm parameter. 

\citet{OCO-stochastic} considered the case where $g_t$'s are time-varying but are i.i.d. with an unknown probability distribution, {i.e., stochastic constraints}. They provided the drift-plus-penalty (DPP) algorithm which guarantees $O(\sqrt{T})$ expected regret and $O(\sqrt{T})$ expected constraint violation under Slater's condition. \citet{dpp-md} gave a variant of DPP attaining the same asymptotic performance {under a more general strong duality assumption.}

\citet{yu-neely,cautious,stronger-benchmark} consider adversarially chosen constraint functions but they set different benchmarks with more restrictions. Recently, \citet{castiglioni2022a} came up with a unifying framework that works for both stochastic and adversarial constraint functions. \citet{cumulative-1,cumulative-2,cumulative-3} studied the notion of \emph{cumulative constraint violation}, given by $\sum_{t=1}^T\left[g_t(\bmc)\right]_+$ where $\left[a\right]_+=\max\{a,0\}$ over $a\in\mathbb{R}$, instead of imposing long-term constraints.

\paragraph{Projection-Free Online Learning and Optimization} \citet{hazan-projection-free} developed the first online projection-free algorithm, based on the Frank-Wolfe method~\citep{frank-wolfe}, that guarantees $O(T^{3/4})$ regret for non-smooth loss functions. \citet{Garber-Hazan} provided an improved algorithm for smooth and strongly convex loss functions. After these works, there has been many results on projection-free algorithms for online convex optimization with no long-term constraint, and \Cref{known-bounds} summarizes the known regret bounds for various settings.
\begin{table}        
\begin{center}
\begin{tabular}{c|c|c|c|c}
\toprule
{Loss function} &{Setting} & {Regret}  & Per-round cost& {Reference} \\
\hline
Non-smooth  & Adversarial & $T^{3/4}$ & 1 & OCG  \citep{hazan-oco}\\
Smooth & Adversarial & $T^{2/3}$ & 1 & OSPF \citep{faster-projection-free}\\
Smooth & Stochastic & $T^{1/2}$ & 1 & ORGFW \citep{Xie_Shen_Zhang_Wang_Qian_2020}\\
\midrule
{Non-smooth} & {Adversarial} & {$T^{1/2}$} & {$T$} & SFTRL {\citep{faster-projection-free}}\\
Smooth & Adversarial & $T^{1/2}$ & $\sqrt{T}$ & MFW  \citep{meta-frank-wolfe}\\
\bottomrule
\end{tabular}
\end{center}
\caption{Regret bounds for projection-free algorithms for OCO with no long-term constraint.}\label{known-bounds}
\end{table}

\section{Online Convex Optimization with Stochastic Constraints}\label{sec:oco}

Let $\mathcal{X}\subseteq\mathbb{R}^d$ be a known fixed compact convex set. Let $f_1,\ldots, f_T:\mathbb{R}^d\to\mathbb{R}$ be a sequence of arbitrary convex loss functions. Let $\bar g(\bmx)=\mathbb{E}_{\bmomega}\left[g(\bmx,\bmomega)\right]:\mathbb{R}^d\to\mathbb{R}$ be a function where $g(\bmx,\bmomega)$ is convex with respect to $\bmx\in\mathcal{X}$ and the expectation is taken with $\bmomega\in\Omega$ from an unknown distribution.

Like~\citet{OCO-stochastic}, we take the benchmark decision $\bmo$ defined as an optimal solution to 
\begin{equation}\label{problem}
    \min_{\bmx\in\mathcal{X}}\ \ \sum_{t=1}^Tf_t(\bmx)\ \ \text{s.t.}\quad \bar g(\bmx) = \mathbb{E}_{\bmomega}\left[g(\bmx,\bmomega)\right] \leq 0.
\end{equation}
{Instead of direct access to $\bar{g}$, we are presented constraint functions $g_1,\ldots, g_T:\mathbb{R}^d\to\mathbb{R}$ where $g_t(\bmx):=g(\bmx, \bm{\omega_t})$ for $t\in[T]$ where $\bm{\omega_1},\ldots, \bm{\omega_T}$ are i.i.d. samples of $\bmomega$.}

Our goal is to design an algorithm {for choosing $\bmc$, $t \in [T]$} that guarantees a sublinear regret against the benchmark $\bmo$ and a sublinear constraint violation at the same time, {under the condition that each $\bmc$ only depends on functions from previous time steps $f_1,\ldots,f_{t-1},g_1,\ldots,g_{t-1}$}. Here, the regret and constraint violation are defined as follows:
$$\regret(T)=\sum_{t=1}^Tf_t(\bmc)-\sum_{t=1}^Tf_t(\bmo),\quad \violation(T)=\sum_{t=1}^Tg_t(\bmc).$$
We focus on the single constraint setting for simplicity, but our framework easily extends to multiple constraints.
We assume that each loss $f_t$ is chosen adversarially, but is independent of $\bm{\omega_s}$ for $s\geq t+1$. {In other words, $f_t$ can be chosen with full knowledge of the history up to time $t$, but \emph{not} future random realizations.}

We also consider the case when the loss functions are stochastic i.i.d. realizations of some random function $f(\bmx,\bmomega)$, i.e., $f_t$ is given by $f_t(\bmx)=f(\bmx,\bm{\omega_t})$ where $f(\bmx, \bmomega)$ is convex with respect to $\bmx\in\mathcal{X}$. {Note here that the random variable $\bmomega$ is the same as the one that appears in $g_t$, meaning that $f_t$ and $g_t$ are possibly dependent.} A direct application of this setting is \emph{stochastic constrained stochastic optimization}, that is formulated as the following optimization problem.
$$\min_{\bmx\in\mathcal{X}}\quad \bar f(\bmx)=\mathbb{E}_{\bmomega}\left[f(\bmx,\bmomega)\right]\quad \text{s.t.}\quad \bar g(\bmx)\leq 0.$$
An iterative algorithm would obtain an i.i.d. sample $\bm{\omega_t}$ of $\bmomega$ at each iteration $t$ and consider $f_t=f_t(\cdot,\bm{\omega_t})$ and $g_t=g_t(\cdot,\bm{\omega_t})$. Given $\{\bmc\}_{t=1}^T$, we may obtain $\bm{\bar x_T} = (1/T)\sum_{t=1}^T\bmc$. By Jensen's inequality, the optimality gap and constraint violation of $\bm{\bar x_T}$ are given by 
$$\mathbb{E}\left[\bar f(\bm{\bar x_T})\right]-\bar f(\bmx)\leq \frac{1}{T}\mathbb{E}\left[\sum_{t=1}^Tf_t(\bmc) - \sum_{t=1}^T f_t(\bmx)\right],\quad \bar g(\bm{\bar x_T})\leq\frac{1}{T}\mathbb{E}\left[\sum_{t=1}^T g_t(\bmc)\right].$$

\section{Online Primal-Dual Projection-Free Learning Framework}\label{sec:alg1}

\cref{alg1} provides a projection-free algorithmic framework for online convex optimization with stochastic constraints. The algorithm is a general template that can take any projection-free algorithm for online convex optimization with no long-term constraint.
{This allows us to use different projection-free algorithms depending on the structure of the loss and constraint functions.}

\begin{algorithm}[tb]
    \caption{Online Primal-Dual Projection-Free Learning Framework}
    \label{alg1}
    \begin{algorithmic}
        \STATE {\bfseries Initialize:} time horizon $T$, number of blocks $Q$, block size $K$, initial iterates $\bm{x_1}\in\mathcal{X}$, $\lambda_1=0$, the number of blocks $Q$, step size $\mu$, augmentation parameter $\theta$, and a projection-free convex optimization oracle $\mathcal{E}$.
        
        \FOR{$q=1$ {\bfseries to} $Q$}
        
        \FOR{$k=1$ {\bfseries to} $K$}
        \STATE Observe $f_t$ and $g_t$.

        \STATE Use oracle $\mathcal{E}$ to obtain $\bmn$ based on $h_t(\bmx)=f_t(\bmx)+\lambda_qg_t(\bmx)$ and $\bmx=\bmc$. 

        \STATE Set $t\leftarrow t+1$.

        \ENDFOR
        \STATE Set 
        $\lambda_{q+1}=\left[(1-\theta\mu)\lambda_{q} +\mu\sum_{t=(q-1)K+1}^{qK}g_t(\bmc) \right]_+$.
        \ENDFOR
    \end{algorithmic}
\end{algorithm}

The idea behind~\cref{alg1} is as follows. First, we break the time horizon $T$ into $Q$ blocks. Each block has $K$ time steps. Then for each of the $Q$ blocks, we use as an oracle a projection-free algorithm developed for online convex optimization with no long-term constraint. To be specific, for block $q\in [Q]$, the oracle is applied to 
$$f_t(\bmx)+\lambda_q g_t(\bmx)$$
which is the loss function $f_t$ at time $t$ penalized by the constraint function $g_t$ where $\lambda_q$ is the penalty parameter for block $q$. Once iterations in block $q$ are completed, we update the penalty parameter $\lambda_q$ (the dual variable) based on the constraint function values realized in block $q$. The update rule is motivated by the online primal-dual augmented Lagrangian algorithm due to \citet{long-term-1,long-term-2},
{
and it makes use of the following augmented Lagrangian function
$$L_t(\bmx, \lambda) = f_t(\bmx)+ \lambda g_t(\bmx) - \frac{\theta}{2K}\lambda^2.$$
Given functions $f_t,g_t$ for time steps $t=(q-1)K+1,\ldots,qK$ observed in block $q$, \cref{alg1} then updates the penalty parameter using the gradient ascent-type update
$$\lambda_{q+1} = \left[\lambda_q + \mu \sum_{t=(q-1)K+1}^{qK}\grad_\lambda L_t(\bmc,\lambda_q)\right]_+ = \left[(1-\theta \mu) \lambda_q + \mu \sum_{t=(q-1)K+1}^{qK} g_t(\bmc) \right]_+,$$
where $\mu$ is a step size.
}

Throughout this section, we work over the $\ell_2$ norm $\|\cdot\|_2$ in $\mathbb{R}^d$ for simplicity. 
\begin{definition}[Lipschitz continuity]
We say that a function $h:\mathcal{X}\to\mathbb{R}$ is $D$-Lipschitz for some $D\geq 0$ if $\|\grad h(\bmx)\|_2\leq D$ for all $\bmx\in\mathcal{X}$. 
\end{definition}

\begin{definition}[Smoothness]
We say that a function $h:\mathcal{X}\to\mathbb{R}$ is $L$-smooth for some $L\geq 0$ if $\|\grad h(\bmx)-\grad h(\bmy)\|_2\leq L\|\bmx-\bmy\|_2$ for all $\bmx,\bmy\in\mathcal{X}$. 
\end{definition}

We formally define the notion of oracle $\mathcal{E}$ used as a subroutine in~\cref{alg1}.

\begin{definition}[Oracle]
We say that $\mathcal{E}$ is an $(\alpha,C_0,C_1,C_2)$-oracle for some $\alpha\in(0,1)$ and $C_0,C_1,C_2\geq0$ if for any sequence of (adversarial or stochastic) convex loss functions $h_1,\ldots, h_K$ that are $D$-Lipschitz and $L$-smooth over domain $\mathcal{X}\subseteq\mathbb{R}^d$, oracle $\mathcal{E}$ guarantees
$$\mathbb{E}\left[\sum_{k=1}^K h_k(\bm{x_k}) -\min_{x\in \mathcal{X}} \sum_{k=1}^K h_k(\bm{x})\right]\leq (C_0+C_1D +C_2L)K^\alpha$$
where the expectation is taken over the randomness of oracle $\mathcal{E}$ itself and the randomness of the convex loss functions. Constants $C_0,C_1,C_2$ are independent of parameters $D,L,K$. 
\end{definition}

\begin{assumption}
When a function is non-smooth, we assume that it is $\infty$-smooth. Moreover, we assume that $0\cdot \infty = 0$.
\end{assumption}
\begin{remark}
Let $\mathcal{E}$ be an $(\alpha,C_0,C_1,C_2)$-oracle for some $\alpha\in(0,1)$ and $C_0,C_1,C_2\geq0$. If $C_2=0$, then $\mathcal{E}$ guarantees an $O(K^\alpha)$ regret for any Lipschitz  loss functions that can be non-smooth. If $C_2>0$, then $\mathcal{E}$ guarantees an $O(K^\alpha)$ regret only if the loss functions are smooth.
\end{remark}

If we are given an $(\alpha, C_0,C_1,C_2)$-oracle, then we set the parameters of \cref{alg1} as follows:
\begin{align}\label{alg1-parameters}
\begin{aligned}
Q=T^\frac{2-2\alpha}{3-2\alpha}, K=T^\frac{1}{3-2\alpha}, \beta \in \left[0, \frac{1-\alpha}{3-2\alpha} \right],  \theta = 3(C_1D+C_2L) T^{\frac{\alpha}{3-2\alpha}-\beta},  \mu=\frac{1}{\theta(Q+1)}.
\end{aligned}
\end{align}
We assume that both $T^\frac{2-2\alpha}{3-2\alpha}$ and $T^\frac{1}{3-2\alpha}$ are integers. Even if they are not, we {can take the ceiling $\lceil \cdot \rceil$ as needed}, and our framework still achieves the same asymptotic guarantees.
\begin{theorem}\label{thm:alg1}
Suppose that loss functions $f_1,\ldots,f_T$ and stochastic constraint functions $g_1,\ldots, g_T$ are $D$-Lipschitz and $L$-smooth.
If $\mathcal{E}$ is an $(\alpha,C_0,C_1,C_2)$-oracle for some $\alpha\in(0,1)$ and $C_0,C_1,C_2\geq0$, then \cref{alg1} whose parameters are set as in~\eqref{alg1-parameters} guarantees that
$$\mathbb{E}\left[\sum_{t=1}^T f_t(\bm{x_t}) -\min_{x\in \mathcal{X}} \sum_{t=1}^T f_t(\bm{x})\right] =O\left(T^{\frac{2-\alpha}{3-2\alpha}+\beta}\right),\quad \mathbb{E}\left[\sum_{t=1}^T g_t(\bm{x_t})\right] =O\left(T^{\frac{5-3\alpha}{6-4\alpha}-\frac{\beta}{2}}\right)$$
where the expectations are taken over the randomness of oracle $\mathcal{E}$ and the randomness of the loss and constraint functions.
\end{theorem}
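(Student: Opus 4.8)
The plan is a blockwise online primal--dual analysis: view the penalty update in \cref{alg1} as projected online gradient ascent, over the $Q$ blocks, on the augmented Lagrangians $\phi_q(\lambda):=\sum_{t\in B_q}L_t(\bmc,\lambda)$ with $B_q=\{(q-1)K+1,\dots,qK\}$, and bound the per-block primal regret using the oracle $\mathcal{E}$. Write $G_q:=\sum_{t\in B_q}g_t(\bmc)$, $R:=\mathrm{diam}(\mathcal{X})$, let $G$ bound $|g_t|$ on $\mathcal{X}$, and let $\bmo$ be the benchmark of \eqref{problem}, so that the quantity to bound is $\regret(T)=\sum_t f_t(\bmc)-\sum_t f_t(\bmo)$. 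In block $q$ the oracle is fed $h_t=f_t+\lambda_q g_t$, which is $(1+\lambda_q)D$-Lipschitz and $(1+\lambda_q)L$-smooth; conditioning on the $\sigma$-algebra $\mathcal{F}_{q-1}$ generated by the first $q-1$ blocks (which freezes $\lambda_q$, hence these constants) and using $\min_{\bmx\in\mathcal{X}}\sum_{t\in B_q}h_t(\bmx)\le\sum_{t\in B_q}h_t(\bmo)$, the oracle guarantee gives $\mathbb{E}\big[\sum_{t\in B_q}(f_t(\bmc)-f_t(\bmo))+\lambda_q(G_q-\sum_{t\in B_q}g_t(\bmo))\,\big|\,\mathcal{F}_{q-1}\big]\le(C_0+(C_1D+C_2L)(1+\lambda_q))K^\alpha$. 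Taking total expectations, summing over $q\in[Q]$, and using $\mathbb{E}\big[\sum_q\lambda_q\sum_{t\in B_q}g_t(\bmo)\big]\le0$ (since $\bar g(\bmo)\le0$ and $\lambda_q$ is $\mathcal{F}_{q-1}$-measurable), I get
$$\mathbb{E}\!\left[\textstyle\sum_{t=1}^T\big(f_t(\bmc)-f_t(\bmo)\big)\right]+\mathbb{E}\!\left[\textstyle\sum_{q=1}^Q\lambda_q G_q\right]\le(C_0+C_1D+C_2L)QK^\alpha+(C_1D+C_2L)K^\alpha\,\mathbb{E}\!\left[\textstyle\sum_q\lambda_q\right].$$

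For the dual side, since $\lambda_{q+1}=[\lambda_q+\mu\grad\phi_q(\lambda_q)]_+$ with $\grad\phi_q(\lambda_q)=G_q-\theta\lambda_q$ and $\lambda_1=0$, nonexpansiveness of $[\cdot]_+$ applied with comparator $\lambda=0$, after $(G_q-\theta\lambda_q)^2\le2G^2K^2+2\theta^2\lambda_q^2$ and absorbing $\mu\theta^2\sum_q\lambda_q^2\le\tfrac\theta2\sum_q\lambda_q^2$ (valid as $\mu\theta^2=\theta/(Q+1)$), gives $\sum_q\lambda_q G_q\ge\tfrac\theta2\sum_q\lambda_q^2+\tfrac1{2\mu}\lambda_{Q+1}^2-\mu QG^2K^2$. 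Substituting this into the displayed primal inequality and using $\sum_t(f_t(\bmc)-f_t(\bmo))\ge-DRT$ together with $\sum_q\lambda_q\le\sqrt{Q\sum_q\lambda_q^2}$ produces, with $M:=DRT+\mu QG^2K^2+(C_0+C_1D+C_2L)QK^\alpha$, an inequality of the form
$$\tfrac\theta2\,\mathbb{E}\!\left[\textstyle\sum_q\lambda_q^2\right]+\tfrac1{2\mu}\mathbb{E}\!\left[\lambda_{Q+1}^2\right]\ \le\ M+(C_1D+C_2L)K^\alpha\sqrt{Q\,\mathbb{E}\!\left[\textstyle\sum_q\lambda_q^2\right]}.$$
Read as a quadratic inequality in $\sqrt{\mathbb{E}[\sum_q\lambda_q^2]}$ and evaluated with the parameters \eqref{alg1-parameters} (the $DRT/\theta$ term dominating), this yields $\mathbb{E}[\sum_q\lambda_q^2]=O(T^{(3-3\alpha)/(3-2\alpha)+\beta})$ and $\mathbb{E}[\lambda_{Q+1}^2]=O(T^{(1-\alpha)/(3-2\alpha)+\beta})$.

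For the regret I move $\mathbb{E}[\sum_q\lambda_q G_q]$ to the right of the primal inequality and, rather than estimating $(C_1D+C_2L)K^\alpha\sum_q\lambda_q$ in isolation, combine it with the $-\tfrac\theta2\sum_q\lambda_q^2$ term supplied by the dual bound and complete the square, $(C_1D+C_2L)K^\alpha\sqrt{Q\sum_q\lambda_q^2}-\tfrac\theta2\sum_q\lambda_q^2\le\tfrac{(C_1D+C_2L)^2K^{2\alpha}Q}{2\theta}$, which for the chosen $\theta$ is of order $T^{(2-\alpha)/(3-2\alpha)+\beta}$; adding the $QK^\alpha$ and $\mu QG^2K^2$ terms gives the claimed $O(T^{(2-\alpha)/(3-2\alpha)+\beta})$ regret. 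For the violation, summing $\mu G_q\le\lambda_{q+1}-(1-\theta\mu)\lambda_q$ over $q$ gives $\sum_{t=1}^T g_t(\bmc)=\sum_q G_q\le\lambda_{Q+1}/\mu+\theta\sum_q\lambda_q$, so $\mathbb{E}[\sum_t g_t(\bmc)]\le\theta(Q+1)\sqrt{\mathbb{E}[\lambda_{Q+1}^2]}+\theta\sqrt{Q\,\mathbb{E}[\sum_q\lambda_q^2]}=O(T^{(5-3\alpha)/(6-4\alpha)-\beta/2})$.

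The main obstacle is the self-referential structure: because the oracle's per-block regret scales with $1+\lambda_q$, the penalty variables cannot be bounded a priori and must be controlled using the very primal--dual inequality in which they occur. This is exactly what the augmentation $-\tfrac{\theta}{2K}\lambda^2$ is for --- it produces the negative $-\tfrac\theta2\sum_q\lambda_q^2$ term that closes the loop, via the completing-the-square step for the regret and via the quadratic inequality for $\mathbb{E}[\sum_q\lambda_q^2]$ itself --- and making the exponents align is what forces the parameter choice \eqref{alg1-parameters}. Beyond this, the work is careful expectation bookkeeping: invoking the oracle conditionally on $\mathcal{F}_{q-1}$ so that the random smoothness and Lipschitz constants $(1+\lambda_q)L,(1+\lambda_q)D$ are frozen, and justifying $\mathbb{E}[\sum_q\lambda_q\sum_{t\in B_q}g_t(\bmo)]\le0$ despite $\bmo$ depending on the (adversarially chosen) loss functions.
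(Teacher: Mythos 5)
Your argument is correct and reproduces both rates, and it shares the paper's primal--dual skeleton: the oracle is invoked blockwise conditionally on the block filtration so that $\lambda_q$ (hence the constants $(1+\lambda_q)D$, $(1+\lambda_q)L$) is frozen, the dual step is analyzed as projected gradient ascent with the estimate $(G_q-\theta\lambda_q)^2\le 2G^2K^2+2\theta^2\lambda_q^2$ where $G_q=\sum_{t=(q-1)K+1}^{qK}g_t(\bmc)$, the benchmark term is discarded via $\mathbb{E}\bigl[\lambda_q\sum_{t=(q-1)K+1}^{qK}g_t(\bmo)\bigr]\le 0$, and the $\lambda_q$-dependent oracle cost is absorbed into the $\theta$-augmentation. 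Where you genuinely diverge is the endgame. The paper (Lemmas \ref{alg1:lemma2} and \ref{alg1:lemma3}) carries a free dual comparator $\lambda\ge0$ all the way through and then substitutes $\lambda\propto\bigl[\mathbb{E}\sum_t g_t(\bmc)\bigr]_+T^{-\frac{2-\alpha}{3-2\alpha}+\beta}$, so the regret bound and a quadratic inequality in the expected violation drop out simultaneously, the latter closed by the $-TDR$ lower bound on regret; no explicit control of the dual iterates is ever needed. You instead fix the comparator at $\lambda=0$, first solve a quadratic inequality (again via the $-DRT$ lower bound) to get $\mathbb{E}\bigl[\sum_q\lambda_q^2\bigr]=O\bigl(T^{\frac{3-3\alpha}{3-2\alpha}+\beta}\bigr)$ and $\mathbb{E}\bigl[\lambda_{Q+1}^2\bigr]=O\bigl(T^{\frac{1-\alpha}{3-2\alpha}+\beta}\bigr)$, recover the regret by completing the square against $-\tfrac{\theta}{2}\sum_q\lambda_q^2$, and bound the violation by the drift-plus-penalty-style telescoping $\sum_qG_q\le\lambda_{Q+1}/\mu+\theta\sum_q\lambda_q$; I checked the exponent arithmetic and it indeed yields $O\bigl(T^{\frac{2-\alpha}{3-2\alpha}+\beta}\bigr)$ and $O\bigl(T^{\frac{5-3\alpha}{6-4\alpha}-\frac{\beta}{2}}\bigr)$. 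The paper's comparator trick is shorter and bypasses dual-iterate bounds entirely; your route is slightly longer but produces explicit magnitude estimates for the dual variables as a byproduct, which can be of independent interest. Note that, exactly like the paper's own proof, you implicitly use a uniform bound $G$ on $|g_t|$ over $\mathcal{X}$, the diameter $R$ of $\mathcal{X}$, and the identity $\mathbb{E}[g_t(\bmo)\mid\mathcal{F}_{t-1}]=\bar g(\bmo)$ with $\bmo$ treated as fixed, so none of these is a gap relative to the paper.
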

In particular, based on projection-free algorithms for OCO with no long-term constraint as in \Cref{known-bounds}, we deduce results in \Cref{results1}. The setting where functions are smooth and $O(\sqrt{T})$ per-round cost is allowed and the settings under strong duality are considered in the next sections.

\section{Primal-Dual Meta-Frank-Wolfe for Smooth Functions}

For the setting where the loss and constraint functions are smooth, \Cref{alg1} guarantees $O(T^{3/4+\beta})$ regret and $O(T^{7/8-\beta/2})$ constraint violation.
In this section, we develop \Cref{alg2} which provides $O(T^{1/2+\beta})$ regret and $O(T^{3/4-\beta/2})$ constraint violation, where we use $O(\sqrt{T})$ gradient evaluations per time step.

\cref{alg2} is a combination of Meta-Frank-Wolfe~\citep{meta-frank-wolfe} for projection-free online convex optimization (with no long-term constraint) and the online primal-dual gradient method~\citep{long-term-1,long-term-2}. We refer to \cref{alg2} as Primal-Dual Meta-Frank-Wolfe (PDMFW). In constrast to \Cref{alg1} that updates the dual variable only when a new block starts, \Cref{alg2} updates the dual variable for every time step. 

\begin{algorithm}[tb]
\caption{Primal-Dual Meta-Frank-Wolfe (PDMFW)}
    \label{alg2}
\begin{algorithmic}
\STATE {\bfseries Initialize:} initial iterates $\bm{x_1}\in\mathcal{X}$, $\lambda_1=0$, step size $\mu$, augmentation parameter $\theta$, inner loop length $K$, projection-free linear optimization oracles $\mathcal{E}^1,\ldots, \mathcal{E}^K$, and step sizes $\gamma_1,\ldots, \gamma_K$.
\FOR{$t=1$ {\bfseries to} $T$}
    \STATE Observe $f_t$ and $g_t$.
    \STATE {\bfseries Primal update:} obtain $\bm{x_{t+1}}$ as follows
    \STATE Set $\bm{x_{t+1}^1} = \bm{x_1}$
    \FOR{$k=1$ {\bfseries to} $K$}
        \STATE Use oracle $\mathcal{E}^k$ (\cref{ftpl}) to obtain $\bm{v_{t+1}^k}$.
        
        \STATE Update 
        $$\bm{x_{t+1}^{k+1}} = \bm{x_{t+1}^k} + \gamma_k\left(\bm{v_{t+1}^k}-\bm{x_{t+1}^k}\right)$$
    \ENDFOR
    \STATE Set $$\bm{x_{t+1}}=\bm{x_{t+1}^{K+1}}$$
    \STATE {\bfseries Dual update:}
    $$\lambda_{t+1}=\left[(1-\theta\mu)\lambda_{t} +\mu g_t(\bmc) \right]_+$$
\ENDFOR
\end{algorithmic}
\end{algorithm}
As in~\citep{long-term-1,long-term-2}, PDMFW works over the following \emph{augmented Lagrangian function}. Upon observing $f_t$ and $g_t$ at time $t$, we take 
\begin{equation}\label{lagrangian}
    L_t(\bmx,\lambda)=f_t(\bmx) + \lambda g_t(\bmx)-\frac{\theta}{2}\lambda^2\end{equation}
to compute the next iterate $\bm{x_{t+1}}$. %
At a high level, PDMFW is an online primal-dual framework based on the augmented Lagrangian function~\eqref{lagrangian} that applies a Frank-Wolfe subroutine for the primal update and gradient ascent for the dual update.

The Frank-Wolfe subroutine replaces the projection-based primal update of the online primal-dual gradient method. Starting from $\bm{x_{t+1}^1}=\bm{x_t}$ at time $t$, the Frank-Wolfe procedure runs with $K$ steps and generates $\bm{x_{t+1}^2},\ldots,\bm{x_{t+1}^{K+1}}$. Then we set $\bmn=\bm{x_{t+1}^{K+1}}$. The Frank-Wolfe update at each step $k$ is given by
$\bm{x_{t+1}^{k+1}} = \bm{x_{t+1}^k} + \gamma_k\left(\bm{v_{t+1}^k}-\bm{x_{t+1}^k}\right)$
for some step size $\gamma_k$. Here, the direction $\bm{v_{t+1}^k}$ {would ideally} be a vector $\bm{v}\in\mathcal{X}$ minimizing
$$\grad_{\bmx}L_{t+1}(\bm{x_{t+1}^k},\lambda_{t+1})^\top\bm{v}=\left(\grad f_{t+1}(\bm{x_{t+1}^k})+\lambda_{t+1}\grad g_{t+1}(\bm{x_{t+1}^k})\right)^\top \bm{v}.$$
However, as functions $f_{t+1}$ and $g_{t+1}$ are {only revealed after choosing $\bm{v_{t+1}^k}$, we use a projection-free online linear optimization oracle $\mathcal{E}^k$ to obtain $\bm{v_{t+1}^k}$} based on the history up to $t$.
This idea was first introduced by~\citet{meta-frank-wolfe-0}. For $k=1,\ldots, K$, we set 
$$\gamma_k = \frac{2}{k+1}.$$

For the dual update, we follow the update rule of the online primal-dual gradient method, that is,
$$\lambda_{t+1} =\left[ \lambda_t + \mu\grad_\lambda L_t(\bm{x_t^k},\lambda_t)\right]_+ = \left[(1-\theta\mu)\lambda_{t} +\mu g_t(\bmc) \right]_+$$
where $\mu$ is a step size. For a fixed $\beta\in(0,1/2)$, we set the parameters as follows:
\begin{equation}\label{alg2-parameters}
K=\lfloor T^{\frac{1}{2}+\beta}\rfloor,\quad \theta = \frac{12RD\sqrt{d}}{T^{\frac{1}{2}+\beta}},\quad \mu= \frac{1}{\theta(T+2)}.
\end{equation}
Here, we may set any value between 0 and 1 for $\beta$. We will show that the (expected) regret of PDMFW is $O(T^{\frac{1}{2}+\beta})$ and the (expected) constraint violation is $O(T^{\frac{3}{4}-\frac{\beta}{2}})$. 

For a projection-free online linear optimization oracle, we use the \emph{Follow-The-Perturbed-Leader (FTPL)} algorithm~\citep{Hannan,ftpl}, given as in~\Cref{ftpl}.

\begin{algorithm}[tb]
\caption{Follow-The-Perturbed-Leader for $\mathcal{E}^k$}
\label{ftpl}
\begin{algorithmic}
\STATE Choose a random perturbation vector $\bmp$ from $[0,\delta]^d$ uniformly at random where
$$\delta = \frac{1}{2D\sqrt{d} T^{\frac{1}{2}+\beta}}$$
\FOR{$t=1$ {\bfseries to} $T$}
\STATE Choose 
\[ \bm{v_{t+1}^k} \in \argmin_{\bm{v} \in \mathcal{X}} \left\{ \bmp^\top \bm{v} + \sum_{s=1}^{t-1} h_s^k(\bm{v}) \right\} \]
where
$$h_s^k(\bm{v}) := \left(\grad f_s(\bm{x_s^k}) +\lambda_s \grad g_s(\bm{x_s^k})\right)^\top \bm{v}.$$
\ENDFOR
\end{algorithmic}
\end{algorithm}
The regret of FTPL for online linear optimization has a dependence on $T$ bounded above by $O(\sqrt{T})$~\citep{ftpl}. However, the coefficient $\grad_{\bmx}L_t(\bmc,\lambda_t)=\grad f_t(\bm{x_t^k})+\lambda_t\grad g_t(\bm{x_t^k})$ for time $t$ has dual variable $\lambda_t$, so the regret grows as a function of $\lambda_1,\ldots,\lambda_T$. Therefore, we need a refined regret analysis of FTPL to show how the regret grows as a function of the dual variables $\lambda_1,\ldots,\lambda_T$.

We remark that \cref{alg2} is similar to the OSPHG algorithm of~\citet{Sadeghi1} developed for online DR-submodular maximization, {but modified to be projection-free}.

Throughout this section, we work over the $\ell_1$ norm $\|\cdot\|_1$ in $\mathbb{R}^d$ and its dual $\|\cdot\|_\infty$, the $\ell_\infty$ norm.
\begin{assumption}[Basic assumptions]\label{basic-assumption}
    There are positive constants $D, G, R$ satisfying the following.
    \begin{itemize}
        \item $\|\grad f_t(\bmx)\|_\infty,\|\grad g_t(\bmx)\|_\infty\leq D$ and $g_t(\bmx)\leq G$ for all $t\in[T]$ and $\bmx\in\mathcal{X}$.
        \item $\|\bmx-\bmy\|_1\leq R$ for all $\bmx, \bmy\in\mathcal{X}$.
    \end{itemize}
\end{assumption}

\begin{assumption}[Smoothness]\label{smoothness} There exists a positive constant $L$ such that $$\left\|\grad f_t(\bmx)-\grad f_t(\bmy)\right\|_\infty, \left\|\grad g_t(\bmx)-\grad g_t(\bmy)\right\|_\infty\leq L\left\|\bmx - \bmy\right\|_1$$ for all $t\in[T]$ and $\bmx,\bmy\in\mathcal{X}$.
\end{assumption}

We first provide an upper bound on the expected regret of FTPL (\cref{ftpl}).

\begin{lemma}\label{regret:ftpl}
For each $k=1,\ldots, K$, the expected regret of FTPL (\cref{ftpl}) under linear functions $h_1^k(\bm{v}),\ldots, h_T^k(\bm{v})$ (defined in \cref{ftpl}) is bounded above by 
$$\mathcal{R}^{\mathcal{E}}(T):=RD\sqrt{d} \left(3T^{\frac{1}{2}+\beta} + \frac{1}{T^{\frac{1}{2}+\beta}}\mathbb{E}\left[\sum_{t=1}^T\lambda_t^2\right]\right).$$
\end{lemma}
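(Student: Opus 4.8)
Throughout, fix $k\in[K]$, write $\bm{c_t}:=\grad f_t(\bm{x_t^k})+\lambda_t\grad g_t(\bm{x_t^k})$ so that $h_t^k(\bm v)=\bm{c_t}^\top\bm v$, and let $\bm p$ denote the perturbation vector of $\mathcal E^k$; set $\bm{z_j}:=\argmin_{\bm v\in\mathcal X}\big\{\bm p^\top\bm v+\sum_{s=1}^j\bm{c_s}^\top\bm v\big\}$ (with $\bm{z_0}:=\argmin_{\bm v\in\mathcal X}\bm p^\top\bm v$), so that FTPL plays $\bm{v_{t+1}^k}=\bm{z_{t-1}}$ in round $t$. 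The plan is to run the textbook ``be-the-leader plus perturbation-stability'' analysis of Follow-the-Perturbed-Leader, but to carry the per-round cost magnitude $\|\bm{c_t}\|_\infty$ symbolically throughout rather than replacing it by a worst-case constant; the bound $\|\bm{c_t}\|_\infty\le\|\grad f_t(\bm{x_t^k})\|_\infty+\lambda_t\|\grad g_t(\bm{x_t^k})\|_\infty\le D(1+\lambda_t)$, which holds by \cref{basic-assumption} because $\lambda_t\ge 0$, is substituted only at the very end, and this is what makes the dual variables appear in $\mathcal R^{\mathcal E}(T)$.

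First I would split the regret into a perturbation term and a stability term. Adding $\bm p$ as a round-$0$ cost vector and running the standard ``be-the-leader'' induction yields $\sum_{t=1}^T\bm{c_t}^\top\bm{z_t}-\min_{\bm v\in\mathcal X}\sum_{t=1}^T\bm{c_t}^\top\bm v\le R\|\bm p\|_\infty$, and therefore
$$\sum_{t=1}^T h_t^k(\bm{v_{t+1}^k})-\min_{\bm v\in\mathcal X}\sum_{t=1}^T h_t^k(\bm v)=\sum_{t=1}^T\bm{c_t}^\top\bm{z_{t-1}}-\min_{\bm v\in\mathcal X}\sum_{t=1}^T\bm{c_t}^\top\bm v\ \le\ R\|\bm p\|_\infty+\sum_{t=1}^T\bm{c_t}^\top(\bm{z_{t-1}}-\bm{z_t}).$$
For the prescribed perturbation scale the first term is $O(RD\sqrt d\,T^{1/2+\beta})$, which furnishes the $3RD\sqrt d\,T^{1/2+\beta}$ piece of the claim.

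For the stability term I would bound $\bbE[\bm{c_t}^\top(\bm{z_{t-1}}-\bm{z_t})]$ by conditioning on the function realizations $f_1,g_1,\dots,f_T,g_T$, on the entire dual trajectory $\lambda_1,\dots,\lambda_T$, and on the perturbations of $\mathcal E^1,\dots,\mathcal E^{k-1}$; under this conditioning the cost sequence $\bm{c_1},\dots,\bm{c_T}$ is frozen while $\bm p$ is still uniform on its box, so I may use the identity $\bm{z_j}(\bm p)=\bm{z_{j-1}}(\bm p+\bm{c_j})$. A change of variables then rewrites $\bbE_{\bm p}[\bm{c_t}^\top(\bm{z_{t-1}}-\bm{z_t})]$ as the difference of $\bbE[\bm{c_t}^\top\bm{z_{t-1}}]$ computed under the perturbation box and under the box translated by $\bm{c_t}$; this difference is at most the oscillation of $\bm u\mapsto\bm{c_t}^\top\bm{z_{t-1}}(\bm u)$ over $\mathcal X$, which is $\le R\|\bm{c_t}\|_\infty$, times the total-variation distance between the two boxes, which is $\le\delta\|\bm{c_t}\|_1\le\delta d\|\bm{c_t}\|_\infty$. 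Altogether $\bbE[\bm{c_t}^\top(\bm{z_{t-1}}-\bm{z_t})]\le\delta dR\,\bbE[\|\bm{c_t}\|_\infty^2]$.

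Summing over $t$, $\bbE\big[\sum_t h_t^k(\bm{v_{t+1}^k})-\min_{\bm v\in\mathcal X}\sum_t h_t^k(\bm v)\big]\le R/\delta+\delta dR\sum_{t=1}^T\bbE[\|\bm{c_t}\|_\infty^2]$; inserting $\|\bm{c_t}\|_\infty^2\le D^2(1+\lambda_t)^2\le 2D^2(1+\lambda_t^2)$ and $\delta=1/(2D\sqrt d\,T^{1/2+\beta})$, the $\lambda$-free terms collapse (using $T^{1/2-\beta}\le T^{1/2+\beta}$) to at most $3RD\sqrt d\,T^{1/2+\beta}$ while the $\lambda$-dependent term becomes exactly $\frac{RD\sqrt d}{T^{1/2+\beta}}\bbE[\sum_t\lambda_t^2]$, which is $\mathcal R^{\mathcal E}(T)$; nothing in the argument depended on $k$. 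The be-the-leader induction and the closing arithmetic are routine; the genuinely delicate part is the stability estimate under \emph{adaptive} cost vectors --- $\bm{c_t}$ involves $\lambda_t$ and the Frank-Wolfe iterate $\bm{x_t^k}$, both of which are functions of the algorithm's own randomness since the dual update feeds back through $g_t(\bm{x_t})$ --- so the conditioning must be set up so that $\bm{c_1},\dots,\bm{c_T}$ are frozen while $\bm p$ stays uniform, and one must resist bounding $\|\bm{c_t}\|_\infty$ by $D(1+\max_s\lambda_s)$: retaining the per-round factor $D(1+\lambda_t)$ is precisely what produces $\sum_t\lambda_t^2$ rather than $T\max_t\lambda_t^2$. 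One should also verify that $\|\bm{c_t}\|_\infty$ never exceeds the perturbation radius (needed for the total-variation bound), which is the case as long as the dual variables stay $O(\sqrt T)$.
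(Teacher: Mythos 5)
Your proposal is correct and follows essentially the same route as the paper: a be-the-leader induction with the perturbation contributing $R\|\bm{p}\|_\infty\le R/\delta$, a per-round stability estimate obtained by comparing the uniform distribution on the perturbation box with its translate by the cost vector (giving $\delta d R\,\mathbb{E}[\|\bm{c_t}\|_\infty^2]$), conditioning so the adaptively chosen cost vectors are frozen while $\bm{p}$ stays uniform, and only then substituting $\|\bm{c_t}\|_\infty\le D(1+\lambda_t)$ and the choice of $\delta$. Your closing caveat about $\|\bm{c_t}\|_\infty$ not exceeding the perturbation radius is unnecessary: the overlap/total-variation bound is $\sum_{i}\min\{1,\delta|c_{t,i}|\}\le\delta\|\bm{c_t}\|_1$, which holds unconditionally, so no a priori control of the dual variables is needed at this step.
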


\Cref{cmfwg:regret} gives bounds on the expected regret and the expected long-term constraint violation under~\cref{alg2}, respectively. Let constants $C_1,C_2$ be defined as
\begin{align*}
        C_1&= 4RD+5R^2L +\frac{R(4D+5RL)^2}{4D\sqrt{d}}+3RD\sqrt{d}+\frac{G^2}{12RD\sqrt{d}},\\
C_2&=\sqrt{96RD\sqrt{d}\left(RD+C_1\right)}.
\end{align*}

\begin{theorem}\label{cmfwg:regret}
Suppose that the loss functions $f_1,\ldots, f_T$ and the stochastic constraint functions $g_1,\ldots, g_T$ satisfy Assumptions~\ref{basic-assumption} and~\ref{smoothness}. Then \cref{alg2} with parameters set according to~\eqref{alg2-parameters} guarantees that
$$\mathbb{E}\left[\sum_{t=1}^T f_t(\bm{x_t}) -\min_{x\in \mathcal{X}} \sum_{t=1}^T f_t(\bm{x})\right] \leq C_1 T^{\frac{1}{2}+\beta},\quad \mathbb{E}\left[\sum_{t=1}^T g_t(\bm{x_t})\right] \leq C_2 T^{\frac{3}{4}-\frac{\beta}{2}}$$
where the expectations are taken over the randomness of $\mathcal{E}^1,\ldots,\mathcal{E}^K$ and the randomness of the loss and constraint functions. 
\end{theorem}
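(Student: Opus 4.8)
The plan is to run the standard online primal-dual (augmented Lagrangian) analysis, but with the projection step replaced by the Meta-Frank-Wolfe subroutine, and to carefully track how the FTPL oracle error (which, by \cref{regret:ftpl}, carries the weight $\frac{1}{T^{1/2+\beta}}\mathbb{E}[\sum_t \lambda_t^2]$) feeds back into the primal regret. First I would establish a "primal tracking" bound: for the iterates $\bm{x_t}$ produced by the $K$-step Frank-Wolfe inner loop applied to the linearized augmented Lagrangian, a telescoping argument over the inner loop with step sizes $\gamma_k = 2/(k+1)$ gives that $L_{t}(\bm{x_{t+1}},\lambda_t) - \min_{\bm{x}\in\mathcal{X}} L_t(\bm{x},\lambda_t)$ is controlled by the $L$-smoothness term $O(R^2 L/K)$ plus the average of the $K$ FTPL regrets $\frac{1}{K}\sum_{k=1}^K \mathcal{R}^{\mathcal{E}}_k(T)$; summing over $t$ and using \cref{regret:ftpl} with the choice $K = \lfloor T^{1/2+\beta}\rfloor$ shows the total primal suboptimality against any fixed comparator is $O(T^{1/2+\beta}) + O(\frac{1}{T}\mathbb{E}[\sum_t \lambda_t^2])$ up to constants absorbing $R,D,G,\sqrt{d}$.

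Next I would run the dual-update drift argument. From $\lambda_{t+1} = [(1-\theta\mu)\lambda_t + \mu g_t(\bm{x_t})]_+$, expanding $\lambda_{t+1}^2$ and using $[a]_+^2 \le a^2$ yields a one-step inequality of the form $\frac{\theta}{2}\lambda_t^2 \le \frac{\lambda_t^2 - \lambda_{t+1}^2}{2\mu} - \lambda_t g_t(\bm{x_t}) + \frac{\mu}{2} g_t(\bm{x_t})^2 + (\text{lower order})$. Combining this with the augmented Lagrangian identity $L_t(\bm{x_t},\lambda_t) = f_t(\bm{x_t}) + \lambda_t g_t(\bm{x_t}) - \frac{\theta}{2}\lambda_t^2$ and the primal tracking bound, and telescoping the $\frac{\lambda_t^2-\lambda_{t+1}^2}{2\mu}$ terms, gives a master inequality relating $\sum_t f_t(\bm{x_t}) - \sum_t f_t(\bm{x})$, $\sum_t \lambda_t g_t(\bm{x})$, and $\sum_t \lambda_t^2$. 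Taking $\bm{x} = \bm{x^*}$ the benchmark kills the $\sum_t \lambda_t g_t(\bm{x^*})$ term in expectation (since $g_t(\bm{x^*})$ has nonpositive mean $\bar g(\bm{x^*})\le 0$ and is independent of $\lambda_t$, which depends only on $\bm\omega_1,\ldots,\bm\omega_{t-1}$); this independence is exactly why the $f_t$ being adversarial-but-history-adapted is compatible.

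The crux is the self-referential coupling: the primal bound contains $+\frac{c}{T}\mathbb{E}[\sum_t \lambda_t^2]$, while the dual argument produces a $-\sum_t \frac{\theta}{2}\lambda_t^2$ term, i.e. $-\frac{\theta}{2}\mathbb{E}[\sum_t\lambda_t^2]$. With $\theta = \Theta(T^{-1/2-\beta})$ as in \eqref{alg2-parameters}, we need $\frac{c}{T} \le \frac{\theta}{4}$ roughly, i.e. $T^{-1} \lesssim T^{-1/2-\beta}$ — false for the stated $\beta\in(0,1/2)$ unless the coefficient bookkeeping is done right. The right move is that the FTPL-error weight is $\frac{1}{K}\cdot\frac{1}{T^{1/2+\beta}}\mathbb{E}[\sum\lambda_t^2] = \Theta(T^{-1-2\beta}\mathbb{E}[\sum\lambda_t^2])$ after dividing by $K$ and this is dominated by $\frac{\theta}{2} = \Theta(T^{-1/2-\beta})$; so the absorption works, and one concludes $\mathbb{E}[\sum_t \lambda_t^2] = O(T^{1/2+\beta})$ (equivalently $\mathbb{E}[\lambda_{Q+1}^2]$ or the running maximum is $O(T^{1/2+\beta}\cdot\mu^{-1}\cdot(\ldots))$ — the precise normalization is where the constants $C_1,C_2$ come from). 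This gives the regret bound $\mathbb{E}[\sum_t f_t(\bm{x_t}) - \min_{\bm x}\sum_t f_t(\bm x)] \le C_1 T^{1/2+\beta}$ directly. For the violation bound, I would use a standard "drop the dual, keep the penalty" argument: from $\lambda_{t+1} \ge (1-\theta\mu)\lambda_t + \mu g_t(\bm{x_t})$ one gets $\sum_t g_t(\bm{x_t}) \le \frac{\lambda_{T+1}}{\mu} + \theta\sum_t\lambda_t$, then bound $\mathbb{E}[\lambda_{T+1}] \le \sqrt{\mathbb{E}[\lambda_{T+1}^2]} = O(\sqrt{\theta\cdot(\text{primal regret})/\mu})$-type estimate together with $\mathbb{E}[\sum_t\lambda_t] \le \sqrt{T\cdot\mathbb{E}[\sum_t\lambda_t^2]}$, and plug in $\mu = 1/(\theta(T+2))$, $\theta = \Theta(T^{-1/2-\beta})$: this yields $O(\sqrt{T^{1/2+\beta}}\cdot\sqrt{T}) = O(T^{3/4+\beta/2})$ — wait, the target is $T^{3/4-\beta/2}$, so the dominant term must instead be $\frac{1}{\mu}\mathbb{E}[\lambda_{T+1}] = \theta(T+2)\mathbb{E}[\lambda_{T+1}]$ with $\mathbb{E}[\lambda_{T+1}] = O(T^{(1/2+\beta)/2}\cdot\mu^{1/2}) = O(T^{1/4+\beta/2}\cdot T^{-1/2} \theta^{-1/2})$, giving $\theta T \cdot T^{1/4+\beta/2}T^{-1/2}\theta^{-1/2} = \theta^{1/2}T^{3/4+\beta/2} = T^{-1/4-\beta/2}T^{3/4+\beta/2} = T^{1/2}$?? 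The exact exponent juggling here is the main obstacle and must be done with the explicit constants; the qualitative point is that $\theta$ is tuned precisely so the two contributions to the violation ($\lambda_{T+1}/\mu$ and $\theta\sum_t\lambda_t$) balance at $T^{3/4-\beta/2}$, and I would verify this by substituting $\mathbb{E}[\sum_t\lambda_t^2]=O(T^{1/2+\beta})$ into both and checking the $\beta$-dependence matches, absorbing everything into $C_2 = \sqrt{96RD\sqrt d(RD+C_1)}$.
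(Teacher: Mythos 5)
Your first half follows the same architecture as the paper (a Meta-Frank-Wolfe primal bound whose FTPL error carries $\mathbb{E}[\sum_t\lambda_t^2]$, a gradient-ascent drift bound for the dual update, combination through the augmented Lagrangian, and dropping $\mathbb{E}[\sum_t\lambda_t g_t(\bm{x^*})]\le 0$), but the crucial bookkeeping is wrong. The Frank--Wolfe weights $\gamma_k\prod_{j>k}(1-\gamma_j)$ sum over $k$ to $O(1)$ (at most $3/2$), so the $K$ oracle regrets enter the total primal bound with an $O(1)$ weight, not averaged with an extra $1/K$: the coefficient of $\mathbb{E}[\sum_t\lambda_t^2]$ is $\Theta(T^{-1/2-\beta})$ (the paper gets $2RD\sqrt{d}\,T^{-1/2-\beta}$ in \cref{lemma2:regret1}), not $\Theta(T^{-1-2\beta})$ as you claim. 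Absorption still works, but for a different reason than you give: $\theta=12RD\sqrt{d}\,T^{-1/2-\beta}$ is chosen a constant factor above this coefficient so that $\theta/6+\theta/(T+2)-\theta/2\le 0$; it is a cancellation of same-order terms, not domination of a lower-order one. (Your aside that ``$T^{-1}\lesssim T^{-1/2-\beta}$ is false for $\beta\in(0,1/2)$'' is also backwards.)

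The more serious gap is the second half. Your asserted bound $\mathbb{E}[\sum_t\lambda_t^2]=O(T^{1/2+\beta})$ is never derived and does not follow from the master inequality: retaining a slack of the $-\tfrac{\theta}{2}\sum_t\lambda_t^2$ term and using $\mathrm{regret}\ge -DRT$ only yields $\mathbb{E}[\sum_t\lambda_t^2]=O(T/\theta)=O(T^{3/2+\beta})$. Consequently your ``drop the dual, keep the penalty'' route for the violation, $\sum_t g_t(\bm{x_t})\le \lambda_{T+1}/\mu+\theta\sum_t\lambda_t$, is left without the moment bounds it needs, and as you concede, the exponents do not close. The paper needs no bound on the dual iterates at all: it takes the inequality of \cref{lemma2:regret3}, which holds for every $\lambda\ge 0$, sets $\lambda=\bigl[\mathbb{E}[\sum_t g_t(\bm{x_t})]\bigr]_+/(48RD\sqrt{d}\,T^{1/2-\beta})$ so that the linear-in-$\lambda$ term minus the $24RD\sqrt{d}\,T^{1/2-\beta}\lambda^2$ penalty produces $\tfrac{1}{96RD\sqrt{d}T^{1/2-\beta}}\bigl[\mathbb{E}[\sum_t g_t(\bm{x_t})]\bigr]_+^2$, and then uses $D$-Lipschitzness ($\sum_t(f_t(\bm{x_t})-f_t(\bm{x^*}))\ge -RDT$) to conclude $\bigl[\mathbb{E}[\sum_t g_t(\bm{x_t})]\bigr]_+^2\le 96RD\sqrt{d}\,T^{1/2-\beta}(RDT+C_1T^{1/2+\beta})$, i.e. the violation is at most $C_2T^{3/4-\beta/2}$. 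Without this (or an actual proof of your dual-moment claims), the constraint-violation half of the theorem is not established.
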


\section{Stochastic Loss Functions under Strong Duality}\label{sec:stochastic-loss}

{In this section, we focus on stochastic loss functions, i.e., $f_t(\bmx)=f(\bmx,\bm{\omega_t})$ in addition to stochastic constraint functions $g_t(\bmx) = g(\bmx,\bm{\omega_t})$. Recall that $\bar f(\bmx) = \mathbb{E}\left[f(\bmx,\bmomega)\right]$ and $\bar g(\bmx) = \mathbb{E}\left[g(\bmx,\bmomega)\right]$. We show that we can obtain improved bounds under strong Lagrangian duality of the following optimization problem:
\begin{equation}\label{conservative-opt}
\min_{\bmx\in\mathcal{X}}\left\{\bar f(\bmx):\ \bar g(\bmx)\leq 0\right\}.
\end{equation}
The Lagrangian of this is
$$\bar L(\bmx,\lambda) :=  \bar f(\bmx)  + \lambda \bar g(\bmx)$$
where $\bmx \in \mathcal{X}$ and $\lambda \geq 0$, and we assume that there exist $\bmo \in \mathcal{X}$ and $\lambda^* \geq 0$ such that
\begin{equation}\label{saddle-point}
\bar L(\bmo, \lambda) \leq \bar L(\bmo, \lambda^*)\leq \bar L(\bmx, \lambda^*), \quad \forall \bmx \in \mathcal{X}, \lambda \geq 0,
\end{equation}
i.e., strong duality holds for \eqref{conservative-opt}. This is satisfied, for example, under Slater constraint qualification, when there exists $\bmh\in\mathcal{X}$ with $\bar g(\bmh)=\mathbb{E}\left[g(\bmh,\bm{\omega})\right]<0$ (see \citet[Proposition 5.1.6]{Bertsekas-nonlinear}), but our analysis allows for more general settings where strong duality holds but Slater constraint qualification may not hold.
}

\begin{lemma}\label{expectation0}
Let $\bm{x_1},\ldots, \bm{x_T}$ and $\lambda_1,\ldots, \lambda_Q$ be the decisions and dual variables chosen by~\cref{alg1}. When \eqref{saddle-point} holds we have
\begin{align*}\mathbb{E}\left[\sum_{t=1}^T g_t(\bmc) \right]&\leq \mathbb{E}\left[\sum_{q=1}^Q\sum_{k=1}^K\left(f_{(q-1)K+k}(\bm{x_{(q-1)K+k}})+(\lambda^*+1)g_{(q-1)K+k}(\bm{x_{(q-1)K+k}})\right)\right]\\
&\quad -\mathbb{E}\left[ \sum_{q=1}^Q\sum_{k=1}^K\left(f_{(q-1)K+k}(\bmo)+\lambda_qg_{(q-1)K+k}(\bmo)\right)\right].\end{align*}
\end{lemma}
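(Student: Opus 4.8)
The plan is to reduce the claimed bound to a saddle-point comparison for the offline Lagrangian $\bar L(\cdot,\cdot)$. Subtracting $\bbE[\sum_{t=1}^T g_t(\bmc)]$ from both sides and regrouping the $g$-terms, the assertion of \cref{expectation0} is equivalent to
\begin{equation*}
\bbE\!\left[\sum_{t=1}^T\bigl(f_t(\bmc)+\lambda^* g_t(\bmc)\bigr)\right]\;\ge\;\bbE\!\left[\sum_{t=1}^T f_t(\bmo)\;+\;\sum_{q=1}^Q\lambda_q\sum_{k=1}^K g_{(q-1)K+k}(\bmo)\right].
\end{equation*}
I would prove this by bounding the left side below by $T\,\bar L(\bmo,\lambda^*)$ and the right side above by the same quantity.

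For the left side, let $\mathcal{F}_{t-1}$ be the $\sigma$-field generated by $\bm{\omega_1},\dots,\bm{\omega_{t-1}}$ together with the internal randomness of $\mathcal{E}$ used before step $t$. By the order of operations in \cref{alg1}, $\bmc$ is $\mathcal{F}_{t-1}$-measurable while $\bm{\omega_t}$ is independent of $\mathcal{F}_{t-1}$, so conditioning on $\mathcal{F}_{t-1}$ gives $\bbE[f_t(\bmc)+\lambda^* g_t(\bmc)\mid\mathcal{F}_{t-1}]=\bar f(\bmc)+\lambda^*\bar g(\bmc)=\bar L(\bmc,\lambda^*)$. Since $\bmc\in\mathcal{X}$, the right-hand inequality of~\eqref{saddle-point} yields $\bar L(\bmc,\lambda^*)\ge\bar L(\bmo,\lambda^*)$ pathwise; summing over $t$ and taking expectations gives the stated lower bound.

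For the right side, $\bmo$ is deterministic, hence $\bbE[\sum_t f_t(\bmo)]=T\bar f(\bmo)$. For the dual term, $\lambda_q$ depends only on the functions observed in blocks $1,\dots,q-1$ (and oracle randomness), so it is independent of $\bm{\omega_{(q-1)K+1}},\dots,\bm{\omega_{qK}}$, giving $\bbE[\lambda_q g_{(q-1)K+k}(\bmo)]=\bbE[\lambda_q]\,\bar g(\bmo)$ for every $k$. The left-hand inequality of~\eqref{saddle-point}, read in the limit $\lambda\to\infty$, forces $\bar g(\bmo)\le0$, and $\lambda_q\ge0$ because of the $[\,\cdot\,]_+$ in the dual update, so each such expectation is nonpositive; thus the right side is at most $T\bar f(\bmo)$. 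Finally, taking $\lambda=0$ in the left-hand inequality of~\eqref{saddle-point} gives $\bar f(\bmo)=\bar L(\bmo,0)\le\bar L(\bmo,\lambda^*)$, so $T\bar f(\bmo)\le T\bar L(\bmo,\lambda^*)$, and chaining the three bounds closes the argument.

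The only delicate point is the measurability bookkeeping—making precise that $\bmc$ is chosen before $\bm{\omega_t}$ is realized, and that $\lambda_q$ stays fixed throughout block $q$ and hence is independent of the samples drawn within that block—but both facts are immediate from the structure of \cref{alg1}. Everything else is the routine weak/strong-duality manipulation, so I do not anticipate a genuine obstacle.
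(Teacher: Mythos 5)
Your argument is correct and is essentially the paper's proof in rearranged form: both hinge on conditioning on the filtration to replace $f_t,g_t$ by $\bar f,\bar g$ (using that $\bmc$ and $\lambda_q$ are determined before the relevant samples are drawn) and then invoking the saddle-point inequality \eqref{saddle-point}. The only cosmetic difference is that you sandwich both sides of the rearranged inequality around $T\bar L(\bmo,\lambda^*)$, disposing of the $\lambda_q g_t(\bmo)$ term via $\lambda_q\geq 0$ and $\bar g(\bmo)\leq 0$, whereas the paper writes $\bar g(\bm{x_t})=\bar L(\bm{x_t},\lambda^*+1)-\bar L(\bm{x_t},\lambda^*)$ and applies \eqref{saddle-point} directly with $\lambda=\lambda_q$; these are equivalent uses of the same inequality.
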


\begin{lemma}\label{expectation2}
Let $\bm{x_1},\ldots, \bm{x_T}$ and $\lambda_1,\ldots, \lambda_T$ be the decisions and dual variables chosen by~\cref{alg2}. When \eqref{saddle-point} holds we have
$$\mathbb{E}\left[\sum_{t=1}^T g_t(\bmc) \right]\leq \mathbb{E}\left[\sum_{t=1}^T\left(f_t(\bmc)+(\lambda^*+1)g_t(\bmc)\right) - \sum_{t=1}^T\left(f_t(\bmo)+\lambda_tg_t(\bmo)\right)\right].$$
\end{lemma}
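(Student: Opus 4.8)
The plan is to reduce the claimed bound to an equivalent comparator inequality and then read it off from the two halves of the saddle-point condition~\eqref{saddle-point}. Subtracting $\mathbb{E}\left[\sum_{t=1}^T g_t(\bmc)\right]$ from both sides, the statement of \cref{expectation2} is equivalent to
\[
\mathbb{E}\left[\sum_{t=1}^T\left(f_t(\bmc)+\lambda^* g_t(\bmc)\right)\right]\ \geq\ \mathbb{E}\left[\sum_{t=1}^T\left(f_t(\bmo)+\lambda_t g_t(\bmo)\right)\right],
\]
so it suffices to bound the left-hand side from below, and the right-hand side from above, by the common quantity $T\,\bar L(\bmo,\lambda^*)$.

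The main device is the natural filtration. I would let $\mathcal{F}_{t-1}$ be the $\sigma$-algebra generated by $\bm{\omega_1},\dots,\bm{\omega_{t-1}}$ together with the internal randomness of the oracles $\mathcal{E}^1,\dots,\mathcal{E}^K$, and first check, by unrolling the primal and dual updates of \cref{alg2}, that $\bmc$ and $\lambda_t$ are $\mathcal{F}_{t-1}$-measurable: the FTPL oracle forming $\bmc$ only uses gradients from rounds $1,\dots,t-1$, and $\lambda_t$ is a deterministic function of $\lambda_{t-1}$ and $g_{t-1}(\bm{x_{t-1}})$. Since $\bm{\omega_t}$ is independent of $\mathcal{F}_{t-1}$ and $f_t(\cdot)=f(\cdot,\bm{\omega_t})$, $g_t(\cdot)=g(\cdot,\bm{\omega_t})$, conditioning on $\mathcal{F}_{t-1}$ replaces $f_t,g_t$ by $\bar f,\bar g$ at the now-frozen points, giving $\mathbb{E}\left[f_t(\bmc)+\lambda^* g_t(\bmc)\mid\mathcal{F}_{t-1}\right]=\bar L(\bmc,\lambda^*)$ and $\mathbb{E}\left[f_t(\bmo)+\lambda_t g_t(\bmo)\mid\mathcal{F}_{t-1}\right]=\bar L(\bmo,\lambda_t)$ (here $\bmo$ is a fixed point of $\mathcal{X}$ and $\lambda_t$ can be pulled out of the conditional expectation).

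Then the two halves of~\eqref{saddle-point} finish the argument: $\bar L(\bmo,\lambda^*)\leq\bar L(\bmx,\lambda^*)$ applied at $\bmx=\bmc\in\mathcal{X}$ yields $\mathbb{E}[\bar L(\bmc,\lambda^*)]\geq\bar L(\bmo,\lambda^*)$, while $\bar L(\bmo,\lambda)\leq\bar L(\bmo,\lambda^*)$ applied pathwise at $\lambda=\lambda_t\geq 0$ (nonnegative since the dual update ends with $[\cdot]_+$) yields $\mathbb{E}[\bar L(\bmo,\lambda_t)]\leq\bar L(\bmo,\lambda^*)$. Summing over $t=1,\dots,T$ and combining with the conditional-expectation identities above gives $\mathbb{E}\left[\sum_t(f_t(\bmc)+\lambda^* g_t(\bmc))\right]\geq T\bar L(\bmo,\lambda^*)\geq\mathbb{E}\left[\sum_t(f_t(\bmo)+\lambda_t g_t(\bmo))\right]$, which is the equivalent inequality; adding back $\mathbb{E}\left[\sum_t g_t(\bmc)\right]$ recovers \cref{expectation2}. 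I do not expect a real obstacle: the argument is the per-round analogue of the proof of \cref{expectation0}, with the block index $(q,k)$ replaced by $t$, and the only point needing care is the measurability bookkeeping — confirming that $\bmc$ and $\lambda_t$ genuinely do not depend on $\bm{\omega_t}$, so the conditional expectations legitimately produce $\bar f,\bar g$, and noting $\lambda_t\geq 0$ so the left saddle-point inequality applies.
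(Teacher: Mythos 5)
Your proposal is correct and is essentially the paper's own argument: both condition on $\mathcal{F}_{t-1}$ to replace $f_t,g_t$ by $\bar f,\bar g$ at the $\mathcal{F}_{t-1}$-measurable points $\bmc$ (with $\lambda_t$), and then apply the two halves of \eqref{saddle-point} via $\bar L(\bmc,\lambda^*)\geq\bar L(\bmo,\lambda^*)\geq\bar L(\bmo,\lambda_t)$. Your initial rearrangement is just a reformulation of the paper's identity $\bar g(\bmc)=\bar L(\bmc,\lambda^*+1)-\bar L(\bmc,\lambda^*)$, so the two proofs coincide in substance.
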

Using these bounds, we deduce the following results. 

\begin{theorem}\label{thm:Slater1}
Suppose that the stochastic loss functions $f_1,\ldots,f_T$ and stochastic constraint functions $g_1,\ldots, g_T$ are $D$-Lipschitz and $L$-smooth. Furthermore, \eqref{saddle-point} is satisfied.
If $\mathcal{E}$ is an $(\alpha,C_0,C_1,C_2)$-oracle for some $\alpha\in(0,1)$ and $C_0,C_1,C_2\geq0$, then \cref{alg1} whose parameters are set as in~\eqref{alg1-parameters} with $\beta=0$ guarantees that
$$\mathbb{E}\left[\sum_{t=1}^T f_t(\bm{x_t}) -\min_{x\in \mathcal{X}} \sum_{t=1}^T f_t(\bm{x})\right] =O\left(T^{\frac{2-\alpha}{3-2\alpha}}\right),\quad \mathbb{E}\left[\sum_{t=1}^T g_t(\bm{x_t})\right] =O\left(T^{\frac{2-\alpha}{3-2\alpha}}\right)$$
where the expectations are taken over the randomness of oracle $\mathcal{E}$ and the randomness of the loss and constraint functions.
\end{theorem}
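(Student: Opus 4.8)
The regret bound needs no new argument: it is exactly the $\beta=0$ instance of \cref{thm:alg1}, which already gives $\mathbb{E}\left[\sum_{t}f_t(\bm{x_t})-\min_{\bmx\in\mathcal X}\sum_{t}f_t(\bmx)\right]=O\big(T^{(2-\alpha)/(3-2\alpha)}\big)$ and uses neither stochasticity of the losses nor strong duality. So the entire point is to upgrade the generic violation bound $O\big(T^{(5-3\alpha)/(6-4\alpha)}\big)$ to the matching $O\big(T^{(2-\alpha)/(3-2\alpha)}\big)$.

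For the violation I would start from \cref{expectation0}, which under \eqref{saddle-point} bounds $V:=\mathbb{E}\left[\sum_t g_t(\bm{x_t})\right]$ by a ``shifted Lagrangian regret'': the played iterates are priced by $\lambda^*+1$ and the benchmark $\bmo$ by the running block dual $\lambda_q$. Using $(\lambda^*+1)\sum_q\sum_k g_t(\bm{x_t})=\sum_q\lambda_q\sum_k g_t(\bm{x_t})+\sum_q(\lambda^*+1-\lambda_q)\sum_k g_t(\bm{x_t})$, the right-hand side of \cref{expectation0} splits into: (i) the oracle's per-block Lagrangian regret against $\bmo$, i.e.\ $\sum_q\big[\sum_k(f_t(\bm{x_t})+\lambda_q g_t(\bm{x_t}))-\sum_k(f_t(\bmo)+\lambda_q g_t(\bmo))\big]$; (ii) the benchmark term $-\mathbb{E}\left[\sum_q\lambda_q\sum_k g_t(\bmo)\right]$; and (iii) the dual-mismatch term $\mathbb{E}\left[\sum_q(\lambda^*+1-\lambda_q)\sum_k g_t(\bm{x_t})\right]$.

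Each piece is then handled essentially as in the proof of \cref{thm:alg1}, with one twist. For (i), $h_t=f_t+\lambda_q g_t$ is $D(1+\lambda_q)$-Lipschitz and $L(1+\lambda_q)$-smooth, so the $(\alpha,C_0,C_1,C_2)$-oracle contributes at most $(C_0+(C_1D+C_2L)(1+\lambda_q))K^\alpha$ in block $q$, giving $O\big(QK^\alpha+(C_1D+C_2L)K^\alpha\sum_q\lambda_q\big)$ in total. For (ii), this is where stochastic losses and the saddle-point structure are used: $\bmo$ and $\lambda^*$ are deterministic since they solve the expectation problem \eqref{conservative-opt}, and $\lambda_q$ depends only on $\bm{\omega_1},\dots,\bm{\omega_{(q-1)K}}$, so conditioning on the pre-block-$q$ history yields $\mathbb{E}[\lambda_q g_{(q-1)K+k}(\bmo)]=\mathbb{E}[\lambda_q]\,\bar g(\bmo)\le 0$; hence (ii) is nonpositive and drops. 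For (iii), I would telescope the dual potential via $\lambda_{q+1}^2\le\big((1-\theta\mu)\lambda_q+\mu\sum_k g_t(\bm{x_t})\big)^2$ with $\lambda_1=0$; the twist relative to \cref{thm:alg1} is to carry this estimate out against the comparison dual $\lambda^*+1$ rather than passing through $\mathbb{E}[\lambda_{Q+1}^2]$ and a lossy Cauchy--Schwarz, which leads, after rearrangement (and up to the harmless factor $1-\theta\mu\in(0,1)$), to the terms $\tfrac{(\lambda^*+1)^2}{2\mu}$, $-\tfrac{\theta}{2}\sum_q\lambda_q^2$, $\theta(\lambda^*+1)\sum_q\lambda_q$, and $\tfrac{\mu}{2}\sum_q\big(\sum_k g_t(\bm{x_t})\big)^2$.

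Substituting \eqref{alg1-parameters} with $\beta=0$ (where $\theta=\Theta(K^\alpha)$, $\mu=1/(\theta(Q+1))$, $QK=T$), the surviving positive terms $\tfrac{(\lambda^*+1)^2}{2\mu}=\Theta((\lambda^*+1)^2\theta Q)$, $QK^\alpha$, and $\tfrac{\mu}{2}QK^2\sup_{t,\bmx}|g_t(\bmx)|^2=\Theta(K^2/\theta)$ are each $\Theta\big(T^{(2-\alpha)/(3-2\alpha)}\big)$, while every $\theta\lambda_q$ cross-term from (i) and (iii) is absorbed by $-\tfrac{\theta}{2}\sum_q\lambda_q^2$ via AM--GM and contributes only $O(Q\theta)=O\big(T^{(2-\alpha)/(3-2\alpha)}\big)$. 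The last subtlety is that a copy of $(\lambda^*+1)V$ still appears on the right, and the regret-against-$\bmo$ term sitting there can be negative when the iterates are infeasible; to close the loop I would invoke the saddle point once more — $\bar f(\bmo)\le\bar f(\bmx)+\lambda^*\bar g(\bmx)$ plus Jensen gives $\mathbb{E}[\sum_t f_t(\bmo)]-\mathbb{E}[\sum_t f_t(\bm{x_t})]\le\lambda^*V$ — so that the net coefficient of $V$ on the left is $1+\theta\mu\lambda^*\ge 1$ and can be divided out. I expect this circular bookkeeping to be the main obstacle: $V$ recurs on both sides both through the $\lambda^*+1$ pricing and through a possibly-negative regret, and one must arrange the inequalities so the left coefficient of $V$ stays $\ge 1$ while the dual contributions collapse instead of costing an extra power of $T$ — which is exactly what pins $\theta$ to its value in \eqref{alg1-parameters}.
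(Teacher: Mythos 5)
Your route is essentially the paper's: the regret is the $\beta=0$ case of \cref{thm:alg1}, and the violation is obtained from \cref{expectation0} by bounding its right-hand side through the per-block oracle regret plus a dual-update telescoping against the fixed comparator $\lambda^*+1$, then substituting \eqref{alg1-parameters}. The paper simply packages your steps (i) and (iii) as \cref{alg1:lemma3} evaluated at $\bmx=\bmo$, $\lambda=\lambda^*+1$ (i.e.\ \cref{alg1:lemma1} plus \cref{alg1:lemma2}), so nothing new beyond \cref{expectation0} is proved. Two remarks on your bookkeeping. First, your term (ii) is double-counted: the quantity $-\sum_q\lambda_q\sum_k g_t(\bmo)$ already sits inside your (i) and is absorbed by the oracle's Lagrangian-regret bound, so the separate step $\mathbb{E}[\lambda_q\,g_t(\bmo)]=\mathbb{E}[\lambda_q]\,\bar g(\bmo)\le 0$ is not needed here (it is used only in the proof of \cref{thm:alg1}); since the phantom term is nonpositive this is harmless, but it is not part of the decomposition. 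Second, the ``circular bookkeeping'' you flag as the main obstacle never materializes: once the dual-mismatch term (iii) is telescoped, every occurrence of $g_t(\bm{x_t})$ on the right is controlled either by $G$ (the $\mu\sum_q(\sum_k g_t(\bm{x_t}))^2$ term) or by the $-\theta\sum_q\lambda_q^2$ absorption, so no copy of $V=\mathbb{E}[\sum_t g_t(\bm{x_t})]$ survives on the right-hand side, and there is no need to invoke the saddle point a second time or to divide out a coefficient $1+\theta\mu\lambda^*$. With that simplification your argument reduces exactly to the paper's proof.
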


\begin{theorem}\label{thm:Slater2}
Suppose that the stochastic loss functions $f_1,\ldots, f_T$ and the stochastic constraint functions $g_1,\ldots, g_T$ satisfy Assumptions~\ref{basic-assumption} and~\ref{smoothness}. Furthermore, \eqref{saddle-point} is satisfied. Then
    \cref{alg2} whose parameters are set as in~\eqref{alg2-parameters} with $\beta=0$ guarantees that
$$\mathbb{E}\left[\sum_{t=1}^T f_t(\bm{x_t}) -\min_{x\in \mathcal{X}} \sum_{t=1}^T f_t(\bm{x})\right] \leq C_1 T^{\frac{1}{2}},\quad \mathbb{E}\left[\sum_{t=1}^T g_t(\bm{x_t})\right] \leq C_2 T^{\frac{1}{2}}$$
where the expectations are taken over the randomness of $\mathcal{E}^1,\ldots,\mathcal{E}^K$ and the randomness of the loss and constraint functions. 
\end{theorem}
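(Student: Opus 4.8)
The plan is to get the regret bound for free and put all the work into sharpening the constraint violation. Since the stochastic loss functions $f_1,\dots,f_T$ and the stochastic constraint functions $g_1,\dots,g_T$ are assumed to satisfy Assumptions~\ref{basic-assumption} and~\ref{smoothness}, Theorem~\ref{cmfwg:regret} applies verbatim with $\beta=0$ and already yields $\mathbb{E}\!\left[\sum_{t=1}^T f_t(\bmc)-\min_{\bmx\in\mathcal X}\sum_{t=1}^T f_t(\bmx)\right]\le C_1 T^{1/2}$; the saddle-point hypothesis~\eqref{saddle-point} is merely an extra assumption, which cannot hurt. So the remaining task is to upgrade the $O(T^{3/4})$ violation of Theorem~\ref{cmfwg:regret} --- a bound dictated by the potentially large terminal dual iterate $\lambda_{T+1}/\mu$ --- to $O(T^{1/2})$, and the device for this is Lemma~\ref{expectation2}.

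I would start from Lemma~\ref{expectation2}, whose right-hand side I view as a primal--dual regret in which the \emph{fixed constant} dual $\lambda^*+1$ is attached to the iterates $\bm{x_1},\dots,\bm{x_T}$ while the running dual $\lambda_t$ is attached to the benchmark $\bmo$. With $L_t(\bmx,\lambda)=f_t(\bmx)+\lambda g_t(\bmx)-\frac{\theta}{2}\lambda^2$ as in~\eqref{lagrangian}, I would expand, for any $\lambda\ge 0$,
\begin{align*}
\sum_{t=1}^T\big(f_t(\bmc)+\lambda g_t(\bmc)-f_t(\bmo)-\lambda_t g_t(\bmo)\big)
&=\sum_{t=1}^T\big(L_t(\bmc,\lambda)-L_t(\bmc,\lambda_t)\big)+\sum_{t=1}^T\big(L_t(\bmc,\lambda_t)-L_t(\bmo,\lambda_t)\big)\\
&\qquad+\frac{\theta T}{2}\lambda^2-\frac{\theta}{2}\sum_{t=1}^T\lambda_t^2 .
\end{align*}
The first sum is the regret of the dual gradient-ascent update relative to the comparator $\lambda$: since $\lambda\mapsto L_t(\bmc,\lambda)$ is concave and $\lambda_{t+1}=\big[\lambda_t+\mu\grad_\lambda L_t(\bmc,\lambda_t)\big]_+$ with $\lambda_1=0$, the standard online (super)gradient inequality bounds it by $\frac{\lambda^2}{2\mu}+\frac{\mu}{2}\sum_{t=1}^T\big(g_t(\bmc)-\theta\lambda_t\big)^2$. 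The second sum is the regret of the Meta-Frank-Wolfe primal subroutine (which generates the iterates $\bm{x_1},\dots,\bm{x_T}$) against the comparator $\bmo$ on the realized loss sequence $\{f_t+\lambda_t g_t\}$ (the $\frac{\theta}{2}\lambda_t^2$ cancels because the dual is the same on both sides); this is exactly the object already controlled inside the proof of Theorem~\ref{cmfwg:regret} through the Meta-Frank-Wolfe analysis combined with the FTPL regret bound of Lemma~\ref{regret:ftpl}, so it is at most some $\mathcal R_{\mathrm{MFW}}$ with $\mathbb{E}[\mathcal R_{\mathrm{MFW}}]=O(T^{1/2})$ when $\beta=0$. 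Using $\big(g_t(\bmc)-\theta\lambda_t\big)^2\le 2g_t(\bmc)^2+2\theta^2\lambda_t^2$ and noting that $\mu\theta=\frac{1}{T+2}\le\frac12$, so that $\mu\theta^2\sum_t\lambda_t^2$ is absorbed by $-\frac{\theta}{2}\sum_t\lambda_t^2$, then taking expectations, setting $\lambda=\lambda^*+1$, and chaining with Lemma~\ref{expectation2}, I obtain
$$\mathbb{E}\!\left[\sum_{t=1}^T g_t(\bmc)\right]\le \mathbb{E}[\mathcal R_{\mathrm{MFW}}]+\frac{(\lambda^*+1)^2}{2\mu}+\mu\,\mathbb{E}\!\left[\sum_{t=1}^T g_t(\bmc)^2\right]+\frac{\theta T}{2}(\lambda^*+1)^2 .$$

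It then remains to substitute the parameters~\eqref{alg2-parameters} with $\beta=0$, namely $\theta=12RD\sqrt{d}\,T^{-1/2}$ and $\mu=\frac{1}{\theta(T+2)}$. Then $\frac{1}{2\mu}=\frac{\theta(T+2)}{2}=O(\sqrt T)$ and $\frac{\theta T}{2}=6RD\sqrt{d}\,\sqrt T$, so the two $(\lambda^*+1)^2$ terms are $O\!\big((\lambda^*+1)^2\sqrt T\big)$; Assumption~\ref{basic-assumption} bounds $g_t(\bmc)^2$ by a constant, so $\mu\,\mathbb{E}[\sum_t g_t(\bmc)^2]=O(\mu T)=O(\sqrt T)$; and $\mathbb{E}[\mathcal R_{\mathrm{MFW}}]=O(\sqrt T)$ as above. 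Adding these gives $\mathbb{E}[\sum_{t=1}^T g_t(\bmc)]=O(T^{1/2})$ with a constant (playing the role of $C_2$) depending on $\lambda^*,R,D,G,d$, which together with the regret bound proves Theorem~\ref{thm:Slater2}.

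The step I expect to be the main obstacle is the bound $\mathbb{E}[\mathcal R_{\mathrm{MFW}}]=O(\sqrt T)$ on the primal subroutine's regret. This is not a direct application of Lemma~\ref{regret:ftpl}, because the Meta-Frank-Wolfe regret on the sequence $f_t+\lambda_t g_t$ scales with the Lipschitz and smoothness constants of $f_t+\lambda_t g_t$ and, via Lemma~\ref{regret:ftpl}, with $\mathbb{E}[\sum_t\lambda_t^2]$, so one must simultaneously control the magnitude of the dual iterates. That control is precisely what the proof of Theorem~\ref{cmfwg:regret} carries out; the present argument merely re-uses it, the only new ingredient being that Lemma~\ref{expectation2} lets us work with the \emph{constant} comparator $\lambda=\lambda^*+1$ rather than a violation-dependent one, which is exactly what replaces $T^{3/4}$ by $T^{1/2}$. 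A minor point to check is that $-\frac{\theta}{2}\sum_t\lambda_t^2$ really does dominate the $\mu\theta^2\sum_t\lambda_t^2$ arising from $\big(g_t(\bmc)-\theta\lambda_t\big)^2$, which holds because $\mu\theta=\frac{1}{T+2}\le\frac12$ for the chosen step sizes.
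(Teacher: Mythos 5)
Your route is essentially the paper's: the regret is inherited from \cref{cmfwg:regret} with $\beta=0$, and the violation is bounded by chaining \cref{expectation2} with a primal--dual regret bound evaluated at the \emph{fixed} comparator $\lambda=\lambda^*+1$; your decomposition plus the dual online-gradient inequality is in effect a re-derivation of \cref{lemma2:regret3}. But there is one genuine gap: the claim that the primal term $\mathcal{R}_{\mathrm{MFW}}=\sum_{t=1}^T\bigl(L_t(\bmc,\lambda_t)-L_t(\bmo,\lambda_t)\bigr)$ satisfies $\mathbb{E}[\mathcal{R}_{\mathrm{MFW}}]=O(\sqrt{T})$ on its own. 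The only available bound on this quantity is \cref{lemma2:regret1}, which gives $C_3T^{1/2}+\frac{2RD\sqrt{d}}{T^{1/2}}\sum_{t=1}^T\mathbb{E}[\lambda_t^2]$, and the proof of \cref{cmfwg:regret} never establishes that $\mathbb{E}[\sum_t\lambda_t^2]=O(T)$, so there is no ``control of the dual iterates'' to re-use: that proof sidesteps the issue entirely by cancelling every $\lambda_t^2$-dependent term against the negative augmentation term $-\frac{\theta}{2}\sum_t\lambda_t^2$, via $\frac{2RD\sqrt{d}}{T^{1/2+\beta}}+\mu\theta^2-\frac{\theta}{2}=\frac{\theta}{6}+\frac{\theta}{T+2}-\frac{\theta}{2}\leq 0$. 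The only a priori control on the duals is $\lambda_t\leq G/\theta=O(\sqrt{T})$, which would only give $\frac{2RD\sqrt{d}}{T^{1/2}}\sum_t\mathbb{E}[\lambda_t^2]=O(T^{3/2})$; so, as justified, your $O(\sqrt{T})$ bound on $\mathcal{R}_{\mathrm{MFW}}$ is unsupported, and your acknowledgement of it as ``the main obstacle'' points at the right place but attributes to \cref{cmfwg:regret} a fact it does not prove.

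The repair lives entirely inside your own decomposition. You spent only a $\mu\theta^2=\frac{\theta}{T+2}\leq\frac{\theta}{3}$ fraction of the available $-\frac{\theta}{2}\sum_t\lambda_t^2$ to absorb the dual-update term. Instead of black-boxing $\mathcal{R}_{\mathrm{MFW}}$, substitute the explicit bound of \cref{lemma2:regret1}, i.e.\ $C_3T^{1/2}+\frac{\theta}{6}\sum_t\mathbb{E}[\lambda_t^2]$ (note $\frac{2RD\sqrt{d}}{T^{1/2}}=\frac{\theta}{6}$ for the chosen $\theta$ with $\beta=0$); the leftover slack $\frac{\theta}{2}-\frac{\theta}{T+2}\geq\frac{\theta}{6}$ absorbs it exactly, and you arrive at $\mathbb{E}\bigl[\sum_{t=1}^T g_t(\bmc)\bigr]\leq C_3T^{1/2}+\mu G^2T+\bigl(\tfrac{1}{2\mu}+\tfrac{\theta T}{2}\bigr)(\lambda^*+1)^2=O(\sqrt{T})$. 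This corrected chain is precisely the paper's proof: \cref{expectation2} followed by \cref{lemma2:regret3} with $\bmx=\bmo$ and $\lambda=\lambda^*+1$.
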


\section{Numerical Experiments}
In this section, we present our experimental results to test the numerical performance of our projection-free algorithms, Algorithms~\ref{alg1} and~\ref{alg2}, for online convex optimization with stochastic constraints. For Algorithm~\ref{alg1}, we use the algorithms listed in~\Cref{known-bounds}. 
We consider the online matrix completion problem as in~\citep{hazan-oco,meta-frank-wolfe}. For our experiments, we generate instances with synthetic simulated data. 
Here, to test our framework, we impose stochastic constraints. 

We are given an $m\times n$ matrix $M$, and at each iteration $t$, we observe a subset $B_t\subseteq \{(i,j):1\leq i\leq m, 1\leq j\leq n\}$ of the entries of $M$. Here, $M$ may encode the preferences of users over certain media items, in which case, $B_t$ corresponds to the ratings inputted by some users at time $t$. Therefore, based on the sequence of subsets $B_1,\ldots, B_T$ that we observe over time, we want to infer the underlying matrix $M$. To be specific, we consider the following problem formulation: 
\begin{align*}
\min\quad \frac{1}{2}\sum_{t=1}^{T}\sum_{(i,j)\in B_{t}}(X_{ij}^{t}-M_{ij})^{2}\quad\text{s.t.}\quad \lVert X^{t}\rVert_{*}\leq k\quad \forall t\in[T],\quad \sum_{t=1}^{T}\text{Tr}\left(G^{t}X^{t}\right)\leq 0.
\end{align*}
Here, $X^1,\ldots, X^T$ are the sequence of $m\times n$ matrices we choose online. Constraint $\lVert X^{t}\rVert_{*}\leq k$ where $\lVert\cdot\rVert_{*}$ is the nuclear norm induces that each $X^t$ has a low rank. Matrix $G^t$ is randomly generated, and $\sum_{t=1}^{T}\text{Tr}\left(G^{t}X^{t}\right)\leq 0$ is the long-term constraint that we impose. Furthermore, each $B_t$ consists of $b$ entries of $M$.

For our experiments, we test instances with $(m,n,k,b)=(50,50,5,100)$. For each of the instances, the underlying matrix $M$ is randomly chosen to have nuclear norm $1$, and thus, it is contained in the domain $\mathcal{X}=\{X\in\mathbb{R}^{m\times n}:\lVert X\rVert_{*}\leq k\}$. At each iteration $t$, we sample matrix $G^t$
from the uniform distribution over $[-1,1]^{m\times n}$ and sample subset $B_{t}$ from $\{B\subseteq\{1,\ldots,m\times n\}:|B|=b\}$ uniformly at random. Therefore, loss and constraint functions are stochastic and smooth. We test instances with time horizon $T\in\{10,20,\ldots,90,100,200,\ldots,900,1000\}$, and for each value of $T$, we generate 30 instances. 

\begin{figure}
\centering
\begin{minipage}{0.4\textwidth}
\includegraphics[width=\textwidth]{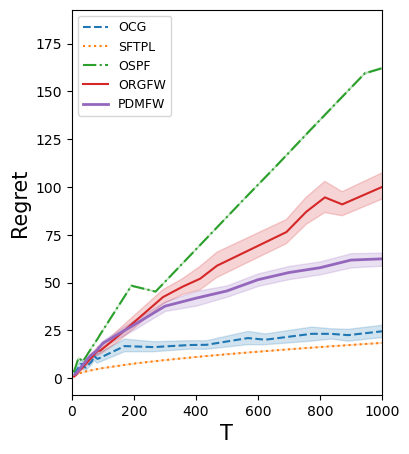}
\subcaption{\label{fig1} }
\end{minipage}
\begin{minipage}{0.4\textwidth}
\includegraphics[width=\textwidth]{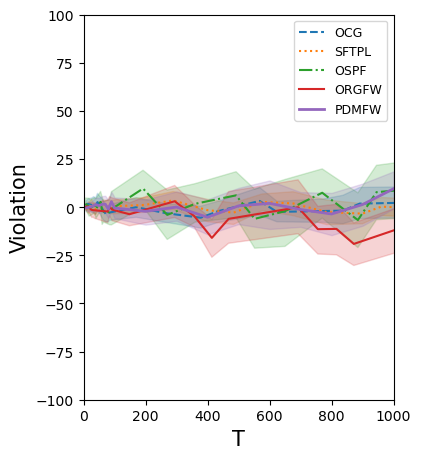}
\subcaption{\label{fig3} }
\end{minipage} 
\caption{\label{figure} Results from testing Algorithms~\ref{alg1} and~\ref{alg2} on  matrix completion instances}
\end{figure}
\Cref{fig1} shows the regret values of the algorithms. PDMFW is \Cref{alg2} while the others correspond to \Cref{alg1} with oracles listed in~\Cref{known-bounds}. 
Note that each algorithm exhibits a sublinear growth in regret, as expected from our theoretical results. In particular, Algorithm~\ref{alg1} with $\mathsf{OCG}$ and Algorithm~\ref{alg1} with $\mathsf{SFTPL}$ achieve lower regret values than the others. 
\Cref{fig3} summarizes the constraint violations of the algorithms. We observe that constraint violation values are centered around 0. In fact, there exists some instances where the constraint violation is below 0. This is possible because the benchmark $\bmo$ is set to a solution with $\bar g(\bmo)\leq 0$. Nevertheless, the results shown in \Cref{fig3} support the theoretical results that Algorithms~\ref{alg1} and~\ref{alg2} attain sublinear constraint violations.

\paragraph{Acknowledgements}
This research is supported, in part, by the KAIST Starting Fund (KAIST-G04220016), the FOUR Brain Korea 21 Program (NRF-5199990113928), the National Research Foundation of Korea (NRF-2022M3J6A1063021).

\bibliographystyle{plainnat} 
\bibliography{mybibfile}

\newpage

\appendix

\section{Performance Analysis of Online Primal-Dual Projection-Free Framework (\cref{alg1})}

Let us define filtration $\{\mathcal{F}_{t}:\ t\geq 0\}$ where $\mathcal{F}_0=\{\emptyset,\Omega\}$ and $\mathcal{F}_{t}=\sigma(\bm{\omega_1},\ldots,\bm{\omega_t})$ being the $\sigma$-algebra generated by the set of random samples $\{\bm{\omega_1},\ldots,\bm{\omega_t}\}$. Note that $\bmc$ is $\mathcal{F}_{t-1}$-measurable for all $t\geq 1$. 

For simplicity, we define vector $\bm{x_k^q}$ and functions $f_k^q, g_k^q, h_k^q,L_k^q$ for $q\in [Q]$ and $k\in [K]$ as
\begin{align*}
\bm{x_k^q} = \bmc,\quad f_k^q(\bmx)  = f_t(\bmx),\quad
g_k^q(\bmx)  = g_t(\bmx),\quad
h_k^q(\bmx) = h_t(\bmx),\quad
L_k^q(\bmx)  = L_t(\bmx,\lambda)
\end{align*}
for any $\bmx\in \mathcal{X}$ and $\lambda\geq 0$
where
$t=k + (q-1)K.$
Then it follows that
\begin{align*}
h_k^q(\bmx) &= f_k^q(\bmx) + \lambda_q g_k^q(\bmx),\\
L_k^q(\bmx,\lambda) &= f_k^q(\bmx) + \lambda_q g_k^q(\bmx)- \frac{\theta}{2K}\lambda^2\\
&=h_k^q(\bmx)- \frac{\theta}{2K}\lambda^2.
\end{align*}
Moreover, note that
\begin{align}\label{alg1:eq1}
\begin{aligned}
\sum_{t=(q-1)K+1}^{qK}\left(L_t(\bmc, \lambda_q)-L_t(\bmx, \lambda_q)\right)&=\sum_{k=1}^{K} \left(L_k^q(\bm{x_k^q}, \lambda_q)-L_k^q(\bmx, \lambda_q)\right)\\
&=\sum_{k=1}^{K} \left(h_k^q(\bm{x_k^q})-h_k^q(\bmx)\right)
\end{aligned}
\end{align}

\begin{lemma}\label{alg1:lemma1}
Let $\bm{x_k^q}$ for $k=1,\ldots, K$ be decisions determined by an $(\alpha, C_0, C_1,C_2)$-oracle. Then
$$
\mathbb{E}\left[\sum_{k=1}^{K} \left(h_k^q(\bm{x_k^q})-h_k^q(\bmx)\right)\mid \mathcal{F}_{(q-1)K}\right]\leq(C_0 +C_1D(1+\lambda_q) +C_2L(1+\lambda_q))T^\frac{\alpha}{3-2\alpha}$$
for any $\bmx\in\mathcal{X}$.
\end{lemma}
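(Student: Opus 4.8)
The plan is to invoke the defining guarantee of the $(\alpha,C_0,C_1,C_2)$-oracle on the $K$ penalized losses $h_1^q,\ldots,h_K^q$ of block $q$, after conditioning on $\mathcal{F}_{(q-1)K}$ so that the dual variable $\lambda_q$ is frozen to a fixed nonnegative scalar.

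First I would record the two properties of $\lambda_q$ that make this work. From the dual update with the $[\cdot]_+$ operator and the initialization $\lambda_1=0$, we have $\lambda_q\ge 0$ for every $q$; and since $\lambda_q$ is computed only from $g_1,\ldots,g_{(q-1)K}$ and $\bm{x_1},\ldots,\bm{x_{(q-1)K}}$, it is $\mathcal{F}_{(q-1)K}$-measurable. Hence, conditionally on $\mathcal{F}_{(q-1)K}$, $\lambda_q$ is a constant, and by the triangle inequality $\|\grad h_k^q(\bmx)\|_2\le\|\grad f_k^q(\bmx)\|_2+\lambda_q\|\grad g_k^q(\bmx)\|_2\le(1+\lambda_q)D$ and similarly $\|\grad h_k^q(\bmx)-\grad h_k^q(\bmy)\|_2\le(1+\lambda_q)L\|\bmx-\bmy\|_2$ for all $\bmx,\bmy\in\mathcal{X}$; so conditionally $h_1^q,\ldots,h_K^q$ is a sequence of convex functions that are $(1+\lambda_q)D$-Lipschitz and $(1+\lambda_q)L$-smooth.

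Next I would apply the oracle guarantee to this conditional instance. Since the oracle used in block $q$ draws fresh internal randomness, and the losses $h_k^q$ depend only on $\lambda_q$ (frozen) together with the samples $\bm{\omega_{(q-1)K+1}},\ldots,\bm{\omega_{qK}}$ and an adversarial choice of $f_k^q$ — which is exactly the kind of loss sequence the oracle guarantee is stated for and which does not disturb $\mathcal{F}_{(q-1)K}$ — the defining inequality of an $(\alpha,C_0,C_1,C_2)$-oracle holds verbatim in the conditional probability space given $\mathcal{F}_{(q-1)K}$ with $D,L$ replaced by $(1+\lambda_q)D,(1+\lambda_q)L$. Combining this with the trivial bound $\sum_{k=1}^K h_k^q(\bmx)\ge\min_{x\in\mathcal{X}}\sum_{k=1}^K h_k^q(x)$, valid for every $\bmx\in\mathcal{X}$, yields
\begin{align*}
\mathbb{E}\left[\sum_{k=1}^K\big(h_k^q(\bm{x_k^q})-h_k^q(\bmx)\big)\ \Big|\ \mathcal{F}_{(q-1)K}\right]
&\le\mathbb{E}\left[\sum_{k=1}^K h_k^q(\bm{x_k^q})-\min_{x\in\mathcal{X}}\sum_{k=1}^K h_k^q(x)\ \Big|\ \mathcal{F}_{(q-1)K}\right]\\
&\le\big(C_0+C_1D(1+\lambda_q)+C_2L(1+\lambda_q)\big)K^\alpha,
\end{align*}
and substituting $K=T^{1/(3-2\alpha)}$, so that $K^\alpha=T^{\alpha/(3-2\alpha)}$, completes the argument.

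The only genuine subtlety I anticipate is this conditioning step: one must be sure that the oracle is (re)started for block $q$ with randomness independent of the past and that the block-$q$ losses — though possibly adversarial through $f_k^q$ and random through $\bm{\omega}$ — fall within the class covered by the oracle's regret guarantee, so that the unconditional bound transfers to a conditional one with the scaled constants $(1+\lambda_q)D$ and $(1+\lambda_q)L$. The Lipschitz/smoothness scaling, the comparison against the unconstrained minimizer over $\mathcal{X}$, and the substitution for $K$ are all routine.
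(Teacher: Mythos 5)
Your proposal is correct and follows essentially the same route as the paper's proof: freeze $\lambda_q$ (which is $\mathcal{F}_{(q-1)K}$-measurable and nonnegative), observe that the penalized losses $h_k^q$ are $(1+\lambda_q)D$-Lipschitz and $(1+\lambda_q)L$-smooth, compare against $\min_{\bmx\in\mathcal{X}}\sum_k h_k^q(\bmx)$, apply the oracle guarantee conditionally, and substitute $K=T^{1/(3-2\alpha)}$. The only difference is that you spell out the conditioning and measurability of $\lambda_q$ more explicitly than the paper does, which is a point in your favor rather than a deviation.
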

\begin{proof}
As we assumed that the loss functions $f_1,\ldots, f_T$ and the constraint functions $g_1,\ldots, g_T$ are $D$-Lipschitz and $L$-smooth, it follows that $h_1,\ldots, h_T$ are $D(1+\lambda_q)$-Lipschitz and $L(1+\lambda_q)$-smooth. Then
\begin{align*}
\mathbb{E}\left[\sum_{k=1}^{K} \left(h_k^q(\bm{x_k^q})-h_k^q(\bmx)\right)\mid \mathcal{F}_{(q-1)K}\right]&\leq \mathbb{E}\left[\sum_{k=1}^{K} \left(h_k^q(\bm{x_k^q})-\min_{\bmx\in\mathcal{X}}h_k^q(\bmx)\right)\mid \mathcal{F}_{(q-1)K}\right]\\
&\leq(C_0 +C_1D(1+\lambda_q) +C_2L(1+\lambda_q))K^\alpha.
\end{align*}
Here, we have $K=T^{\frac{1}{3-2\alpha}}$, as required.
\end{proof}

Next, observe that
\begin{align}\label{alg1:eq2}
\begin{aligned}
\sum_{t=(q-1)K+ 1}^{qK}\left(L_t(\bmc, \lambda)-L_t(\bmc, \lambda_q)\right)&=\sum_{k=1}^{K} \left(L_k^q(\bm{x_k^q}, \lambda)-L_k^q(\bm{x_k^q}, \lambda_q)\right).
\end{aligned}
\end{align}
\begin{lemma}\label{alg1:lemma2}
Let $\lambda_1,\ldots,\lambda_Q$ be the dual variables chosen by \cref{alg1}. Then for any $\lambda\geq 0$,
\begin{align*}
\sum_{q=1}^Q\sum_{k=1}^{K} \left(L_k^q(\bm{x_k^q}, \lambda)-L_k^q(\bm{x_k^q}, \lambda_q)\right)\leq \frac{1}{2\mu}\lambda^2+G^2K^2Q\mu +\theta^2\mu\sum_{q=1}^Q\lambda_q^2.
\end{align*}
\end{lemma}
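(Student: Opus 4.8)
The plan is to read the left-hand side as the cumulative ``ascent regret'' of the dual updates: the map $\lambda\mapsto L_t(\bmc,\lambda)$ is concave (a quadratic with negative leading coefficient), and the update of \cref{alg1} is exactly projected gradient ascent on $\sum_t L_t(\bmc,\cdot)$ over each block, with projection onto $[0,\infty)$. So the target should follow from the standard potential/telescoping argument with potential $\Phi_q=(\lambda_q-\lambda)^2/(2\mu)$, once each block is collapsed to a closed form.

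First I would compute one block's contribution. Using $L_t(\bmx,\lambda)=f_t(\bmx)+\lambda g_t(\bmx)-\tfrac{\theta}{2K}\lambda^2$ together with \eqref{alg1:eq2}, and writing $S_q:=\sum_{t=(q-1)K+1}^{qK} g_t(\bmc)$, the $f_t(\bmc)$ terms cancel and, since each block has exactly $K$ steps,
$$\sum_{k=1}^{K}\Big(L_k^q(\bm{x_k^q},\lambda)-L_k^q(\bm{x_k^q},\lambda_q)\Big)=(\lambda-\lambda_q)S_q-\frac{\theta}{2}\big(\lambda^2-\lambda_q^2\big).$$
I would then use $\lambda^2-\lambda_q^2=(\lambda-\lambda_q)^2+2\lambda_q(\lambda-\lambda_q)$ to regroup the $\theta$-terms:
$$\sum_{k=1}^{K}\Big(L_k^q(\bm{x_k^q},\lambda)-L_k^q(\bm{x_k^q},\lambda_q)\Big)=(\lambda-\lambda_q)\big(S_q-\theta\lambda_q\big)-\frac{\theta}{2}(\lambda-\lambda_q)^2\le(\lambda-\lambda_q)\big(S_q-\theta\lambda_q\big),$$
dropping the nonpositive term. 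Note $S_q-\theta\lambda_q=\sum_t\grad_\lambda L_t(\bmc,\lambda_q)$ is precisely the aggregate dual gradient driving the update $\lambda_{q+1}=[\lambda_q+\mu(S_q-\theta\lambda_q)]_+$.

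The core step is the one-step inequality. Since $\lambda\ge 0$ and projection onto $[0,\infty)$ is nonexpansive, $(\lambda_{q+1}-\lambda)^2\le\big(\lambda_q-\lambda+\mu(S_q-\theta\lambda_q)\big)^2$; expanding and rearranging gives
$$(\lambda-\lambda_q)\big(S_q-\theta\lambda_q\big)\le\frac{(\lambda_q-\lambda)^2-(\lambda_{q+1}-\lambda)^2}{2\mu}+\frac{\mu}{2}\big(S_q-\theta\lambda_q\big)^2.$$
Summing over $q=1,\dots,Q$: the first term telescopes to $\big((\lambda_1-\lambda)^2-(\lambda_{Q+1}-\lambda)^2\big)/(2\mu)\le\lambda^2/(2\mu)$ since $\lambda_1=0$; for the second, I would use $(S_q-\theta\lambda_q)^2\le 2S_q^2+2\theta^2\lambda_q^2$ and the uniform bound $|g_t(\bmx)|\le G$ over $\mathcal{X}$ (hence $|S_q|\le KG$), giving $\tfrac{\mu}{2}\sum_q(S_q-\theta\lambda_q)^2\le\mu K^2G^2Q+\theta^2\mu\sum_q\lambda_q^2$. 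Adding the two contributions yields exactly the claimed bound.

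There is no genuine obstacle here — it is the textbook online gradient-ascent regret bound applied to the dual sequence. The points that need care are: keeping the $\tfrac{\theta}{2K}$ normalization consistent so that summing $K$ per-step terms in a block produces $\tfrac{\theta}{2}\lambda^2$ (and, after squaring, the clean $\theta^2\mu\sum_q\lambda_q^2$ rather than a $1/K$-scaled version); making sure the discarded term is the favorable $-\tfrac{\theta}{2}(\lambda-\lambda_q)^2$; and invoking nonexpansiveness of $[\cdot]_+$ against the nonnegative comparator $\lambda\ge 0$.
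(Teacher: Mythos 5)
Your proposal is correct and follows essentially the same route as the paper: a one-step expansion of $(\lambda_{q+1}-\lambda)^2$ using nonexpansiveness of $[\cdot]_+$, the bound $(S_q-\theta\lambda_q)^2\le 2K^2G^2+2\theta^2\lambda_q^2$, and telescoping with $\lambda_1=0$. The only cosmetic difference is that you expand the block difference exactly and drop $-\tfrac{\theta}{2}(\lambda-\lambda_q)^2$, whereas the paper invokes concavity of $L_k^q(\bm{x_k^q},\cdot)$ — identical steps for this quadratic.
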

\begin{proof}
Note that 
$$\lambda_{q+1} = \left[\lambda_q + \mu\sum_{k=1}^{K}\grad_\lambda L_k^q(\bm{x_k^q},\lambda_q)\right]_+.$$
Then
\begin{align*}
\left(\lambda_{q+1}-\lambda\right)^2 &\leq \left(\lambda_q + \mu\sum_{k=1}^{K}\grad_\lambda L_k^q(\bm{x_k^q},\lambda_q)-\lambda\right)^2\\
&=\left(\lambda_{q}-\lambda\right)^2+\mu^2\left(\sum_{k=1}^{K}g_k^q(\bm{x_k^q}) -\theta\lambda_q\right)^2 + 2\mu \left(\sum_{k=1}^{K}\grad_\lambda L_k^q(\bm{x_k^q},\lambda_q)\right)^\top (\lambda_q-\lambda)\\
&\leq \left(\lambda_{q}-\lambda\right)^2 +  2\mu^2(G^2K^2 +\theta^2\lambda_q^2) +2\mu\sum_{k=1}^{K}\left(L_k^q(\bm{x_k^q},\lambda_q)-L_k^q(\bm{x_k^q},\lambda)\right).
\end{align*}
Therefore, it follows that
\begin{align*}
&\sum_{q=1}^Q\sum_{k=1}^{K}\left(L_k^q(\bm{x_k^q},\lambda_q)-L_k^q(\bm{x_k^q},\lambda)\right)\\&\leq \sum_{q=1}^Q\frac{1}{2\mu}\left((\lambda_q-\lambda)^2-(\lambda_{q+1}-\lambda)^2\right) +G^2K^2Q\mu +\theta^2 \mu\sum_{q=1}^Q\lambda_q^2\\
&=\frac{1}{2\mu}\lambda^2  - \frac{1}{2\mu}(\lambda_{Q+1}-\lambda)^2+G^2K^2Q\mu +\theta^2 \mu\sum_{q=1}^Q\lambda_q^2\\
&\leq \frac{1}{2\mu}\lambda^2 +G^2K^2Q\mu +\theta^2 \mu\sum_{q=1}^Q\lambda_q^2
\end{align*}
where the last inequality holds because $(\lambda_{Q+1}-\lambda)^2\geq 0$. 
\end{proof}

Note that
\begin{align}\label{alg1:eq3}
\begin{aligned}
&\sum_{q=1}^Q\sum_{t=(q-1)K+ 1}^{qK}\left(L_t(\bmc, \lambda)-L_t(\bmx, \lambda_q)\right)\\
&=\sum_{q=1}^Q\sum_{k=1}^{K} \left(L_k^q(\bm{x_k^q}, \lambda)-L_k^q(\bmx, \lambda_q)\right)\\
&=\sum_{q=1}^Q\sum_{k=1}^{K} \left(f_k^q(\bm{x_k^q})-f_k^q(\bmx)\right) +\lambda \sum_{q=1}^Q\sum_{k=1}^{K} g_k^q(\bm{x_k^q})-\sum_{q=1}^Q\lambda_q\sum_{k=1}^{K} g_k^q(\bmx) -\frac{Q\theta}{2}\lambda^2  + \frac{\theta}{2}\sum_{q=1}^Q\lambda_q^2.
\end{aligned}
\end{align}
Based on~\eqref{alg1:eq1},~\eqref{alg1:eq2},~\eqref{alg1:eq3}, Lemmas~\ref{alg1:lemma1} and~\ref{alg1:lemma2}, we deduce the following lemma.

\begin{lemma}\label{alg1:lemma3}
Let $\bm{x_k^q}$ for $k=1,\ldots, K$ and $\lambda_1,\ldots,\lambda_Q$ be the decisions and the dual variable chosen by~\Cref{alg1}. Then for any $\bmx\in\mathcal{X}$ and $\lambda\geq 0$, 
\begin{align*}
&\mathbb{E}\left[\sum_{q=1}^Q\sum_{k=1}^{K} \left(f_k^q(\bm{x_k^q})-f_k^q(\bmx)\right) +\lambda \sum_{q=1}^Q\sum_{k=1}^{K} g_k^q(\bm{x_k^q})-\sum_{q=1}^Q\lambda_q\sum_{k=1}^{K} g_k^q(\bmx)\right]\\
&\leq \frac{1}{2}\left(\frac{1}{\mu} + Q\theta\right)\lambda^2 +G^2K^2Q\mu +Q(C_0+C_1D+C_2L)T^{\frac{\alpha}{3-2\alpha}}+\frac{Q}{2}(C_1D+C_2L)T^{\frac{\alpha}{3-2\alpha} +\beta}.
\end{align*}
\end{lemma}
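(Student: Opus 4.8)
The plan is to add up the three identities \eqref{alg1:eq1}, \eqref{alg1:eq2}, \eqref{alg1:eq3}, feed the two pieces into \cref{alg1:lemma1,alg1:lemma2}, and then absorb the leftover dual-variable terms using the specific parameter choices in \eqref{alg1-parameters}. Nothing here is conceptually new beyond bookkeeping once the right split is chosen.

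\textbf{Step 1 (algebraic reduction).} By \eqref{alg1:eq3}, the quantity inside the expectation on the left-hand side equals
$$\sum_{q=1}^Q\sum_{k=1}^K\left(L_k^q(\bm{x_k^q},\lambda)-L_k^q(\bmx,\lambda_q)\right)+\frac{Q\theta}{2}\lambda^2-\frac{\theta}{2}\sum_{q=1}^Q\lambda_q^2.$$
I would split each summand as $L_k^q(\bm{x_k^q},\lambda)-L_k^q(\bmx,\lambda_q)=\left(L_k^q(\bm{x_k^q},\lambda)-L_k^q(\bm{x_k^q},\lambda_q)\right)+\left(L_k^q(\bm{x_k^q},\lambda_q)-L_k^q(\bmx,\lambda_q)\right)$, i.e.\ a pure dual gap at the realized primal point plus a pure primal gap at the realized dual value.

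\textbf{Step 2 (apply the two lemmas).} Summed over all $q,k$, the first group is exactly the left-hand side of \cref{alg1:lemma2} by \eqref{alg1:eq2}, hence bounded by $\tfrac{1}{2\mu}\lambda^2+G^2K^2Q\mu+\theta^2\mu\sum_{q}\lambda_q^2$. For the second group, \eqref{alg1:eq1} identifies its sum over $k$ in block $q$ with $\sum_{k=1}^K(h_k^q(\bm{x_k^q})-h_k^q(\bmx))$; since $\lambda_q$ is $\mathcal{F}_{(q-1)K}$-measurable I would apply \cref{alg1:lemma1} conditionally on $\mathcal{F}_{(q-1)K}$ and then take the outer expectation (tower rule), giving $\mathbb{E}\big[\sum_{k}(h_k^q(\bm{x_k^q})-h_k^q(\bmx))\big]\le (C_0+(C_1D+C_2L)(1+\mathbb{E}[\lambda_q]))T^{\frac{\alpha}{3-2\alpha}}$. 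Summing over $q$ contributes $Q(C_0+C_1D+C_2L)T^{\frac{\alpha}{3-2\alpha}}+(C_1D+C_2L)T^{\frac{\alpha}{3-2\alpha}}\sum_{q=1}^Q\mathbb{E}[\lambda_q]$.

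\textbf{Step 3 (collect and complete the square).} Putting these together, $\mathbb{E}[\text{LHS}]$ is at most $\tfrac12\left(\tfrac1\mu+Q\theta\right)\lambda^2+G^2K^2Q\mu+Q(C_0+C_1D+C_2L)T^{\frac{\alpha}{3-2\alpha}}$ plus the residual $\left(\theta^2\mu-\tfrac\theta2\right)\sum_{q}\mathbb{E}[\lambda_q^2]+(C_1D+C_2L)T^{\frac{\alpha}{3-2\alpha}}\sum_{q}\mathbb{E}[\lambda_q]$. The first three terms already match the claimed bound, so it remains to show the residual is at most $\tfrac{Q}{2}(C_1D+C_2L)T^{\frac{\alpha}{3-2\alpha}+\beta}$. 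With $\mu=\tfrac{1}{\theta(Q+1)}$ we get $\theta^2\mu-\tfrac\theta2=-\theta c_Q$ where $c_Q=\tfrac12-\tfrac1{Q+1}\ge\tfrac16$ for $Q\ge 2$, so the residual equals $\sum_q\mathbb{E}\big[(C_1D+C_2L)T^{\frac{\alpha}{3-2\alpha}}\lambda_q-\theta c_Q\lambda_q^2\big]\le\tfrac{Q(C_1D+C_2L)^2T^{2\alpha/(3-2\alpha)}}{4\theta c_Q}$ by $a\lambda-b\lambda^2\le a^2/(4b)$. Plugging in $\theta=3(C_1D+C_2L)T^{\frac{\alpha}{3-2\alpha}-\beta}$ turns the last bound into $\tfrac{Q}{12c_Q}(C_1D+C_2L)T^{\frac{\alpha}{3-2\alpha}+\beta}\le\tfrac{Q}{2}(C_1D+C_2L)T^{\frac{\alpha}{3-2\alpha}+\beta}$, which finishes the argument (and $Q=T^{(2-2\alpha)/(3-2\alpha)}\ge 2$ holds for all large $T$).

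\textbf{Main obstacle.} The delicate part is keeping the randomness straight: because $\lambda_q$ is itself a random variable measurable only with respect to the past block boundaries, \cref{alg1:lemma1} can only be invoked conditionally on $\mathcal{F}_{(q-1)K}$, and stitching this together via the tower rule is precisely what produces the $\mathbb{E}[\lambda_q]$ and $\mathbb{E}[\lambda_q^2]$ terms. The second subtlety is that the absorption in Step 3 is not slack—it hinges on the constant $3$ and the exponent $\tfrac{\alpha}{3-2\alpha}-\beta$ baked into $\theta$ in \eqref{alg1-parameters}, together with $\theta\mu=\tfrac1{Q+1}\le\tfrac12$—so I would verify those constants carefully; the remaining manipulations are routine.
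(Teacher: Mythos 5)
Your proposal is correct and follows essentially the same route as the paper: the same decomposition via \eqref{alg1:eq1}--\eqref{alg1:eq3}, the same conditional application of \cref{alg1:lemma1} with the tower rule (producing the $\mathbb{E}[\lambda_q]$ terms), and \cref{alg1:lemma2} for the dual part. The only difference is cosmetic: you absorb the linear $\mathbb{E}[\lambda_q]$ terms by maximizing $a\lambda-b\lambda^2\le a^2/(4b)$, whereas the paper uses the split $\lambda_q T^{\frac{\alpha}{3-2\alpha}}\le\tfrac{1}{2}T^{\frac{\alpha}{3-2\alpha}+\beta}+\tfrac{\lambda_q^2}{2}T^{\frac{\alpha}{3-2\alpha}-\beta}$ and checks that the resulting coefficient of $\sum_q\lambda_q^2$ is nonpositive (using $\theta^2\mu=\theta/(Q+1)$ and $Q\ge 2$), both yielding the identical bound.
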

\begin{proof}
Note that
\begin{align*}
&\sum_{q=1}^Q\sum_{k=1}^{K} \left(f_k^q(\bm{x_k^q})-f_k^q(\bmx)\right) +\lambda \sum_{q=1}^Q\sum_{k=1}^{K} g_k^q(\bm{x_k^q})-\sum_{q=1}^Q\lambda_q\sum_{k=1}^{K} g_k^q(\bmx)\\ 
&=\sum_{q=1}^Q\sum_{k=1}^{K} \left(L_k^q(\bm{x_k^q}, \lambda)-L_k^q(\bmx, \lambda_q)\right)+\frac{Q\theta}{2}\lambda^2  -\frac{\theta}{2}\sum_{q=1}^Q\lambda_q^2\\
&=\sum_{q=1}^Q\sum_{k=1}^{K} \left(h_k^q(\bm{x_k^q})-h_k^q(\bmx)\right)+\sum_{q=1}^Q\sum_{k=1}^{K} \left(L_k^q(\bm{x_k^q}, \lambda)-L_k^q(\bm{x_k^q}, \lambda_q)\right)+\frac{Q\theta}{2}\lambda^2  -\frac{\theta}{2}\sum_{q=1}^Q\lambda_q^2\\
&\leq \sum_{q=1}^Q\sum_{k=1}^{K} \left(h_k^q(\bm{x_k^q})-h_k^q(\bmx)\right)+ \frac{1}{2}\left(\frac{1}{\mu} + Q\theta\right)\lambda^2 +G^2K^2Q\mu +\left(\theta^2\mu - \frac{\theta}{2}\right)\sum_{q=1}^Q \lambda_q^2
\end{align*}
where the first equality is from~\eqref{alg1:eq3}, the second equality is due to~\eqref{alg1:eq1} and~\eqref{alg1:eq2}, and the last inequality follows from~\Cref{alg1:lemma2}. By~\Cref{alg1:lemma1},
\begin{align*}
\mathbb{E}\left[\sum_{q=1}^Q\sum_{k=1}^{K} \left(h_k^q(\bm{x_k^q})-h_k^q(\bmx)\right)\right]&=\sum_{q=1}^Q\mathbb{E}\left[\mathbb{E}\left[\sum_{k=1}^{K} \left(h_k^q(\bm{x_k^q})-h_k^q(\bmx)\right)\mid \mathcal{F}_{(q-1)K}\right]\right]\\
&\leq \sum_{q=1}^Q\mathbb{E}\left[(C_0 +C_1D(1+\lambda_q) +C_2L(1+\lambda_q))T^\frac{\alpha}{3-2\alpha}\right].
\end{align*}

Here, using the fact that $a+b\geq 2\sqrt{ab}$ for any $a,b\geq0$, we obtain
$$\lambda_q T^{\frac{\alpha}{3-2\alpha}}\leq \frac{1}{2}T^{\frac{\alpha}{3-2\alpha}+\beta}+\frac{\lambda_q^2}{2}T^{\frac{\alpha}{3-2\alpha}-\beta}.$$
Therefore, it follows that
\begin{align*}
&\sum_{q=1}^Q(C_0 +C_1D(1+\lambda_q) +C_2L(1+\lambda_q))T^\frac{\alpha}{3-2\alpha}\\
&\leq Q(C_0+C_1D+C_2L)T^{\frac{\alpha}{3-2\alpha}}+\frac{Q}{2}(C_1D+C_2L)T^{\frac{\alpha}{3-2\alpha} +\beta}+\frac{1}{2}(C_1D+C_2L)T^{\frac{\alpha}{3-2\alpha}-\beta}\sum_{q=1}^Q\lambda_q^2\\
&=Q(C_0+C_1D+C_2L)T^{\frac{\alpha}{3-2\alpha}}+\frac{Q}{2}(C_1D+C_2L)T^{\frac{\alpha}{3-2\alpha} +\beta}+\frac{\theta}{6}\sum_{q=1}^Q\lambda_q^2.
\end{align*}
Note that
$$\left(\theta^2\mu - \frac{\theta}{2}\right)\sum_{q=1}^Q \lambda_q^2+ \frac{\theta}{6}\sum_{q=1}^Q\lambda_q^2=\left(\theta^2\mu - \frac{\theta}{3}\right)\sum_{q=1}^Q \lambda_q^2=\left(\frac{\theta}{Q+1} - \frac{\theta}{3}\right)\sum_{q=1}^Q \lambda_q^2.$$
Since $\lambda_1=0$ and $Q+1\geq 3$ for $Q\geq 2$, we have
$$\left(\frac{\theta}{Q+1} - \frac{\theta}{3}\right)\sum_{q=1}^Q \lambda_q^2\leq 0,$$
as required.
\end{proof}

\begin{proof}[\bf Proof of \Cref{thm:alg1}]
By plugging in \eqref{alg1-parameters} to the inequality given in~\Cref{alg1:lemma3}, it follows that
\begin{align*}
&\mathbb{E}\left[\sum_{q=1}^Q\sum_{k=1}^{K} \left(f_k^q(\bm{x_k^q})-f_k^q(\bmx)\right) +\lambda \sum_{q=1}^Q\sum_{k=1}^{K} g_k^q(\bm{x_k^q})-\sum_{q=1}^Q\lambda_q\sum_{k=1}^{K} g_k^q(\bmx)\right]\\
&\leq \frac{1}{2}\left(\frac{1}{\mu} + Q\theta\right)\lambda^2 +G^2K^2Q\mu +Q(C_0+C_1D+C_2L)T^{\frac{\alpha}{3-2\alpha}}+\frac{Q}{2}(C_1D+C_2L)T^{\frac{\alpha}{3-2\alpha} +\beta}\\
&=\frac{1}{2}(2Q+1)\theta\lambda^2 +\frac{G^2K^2Q}{\theta(Q+1)} +Q(C_0+C_1D+C_2L)T^{\frac{\alpha}{3-2\alpha}}+\frac{Q}{2}(C_1D+C_2L)T^{\frac{\alpha}{3-2\alpha} +\beta}\\
&\leq 2Q\theta \lambda^2 + \frac{G^2K^2}{\theta}+Q(C_0+C_1D+C_2L)T^{\frac{\alpha}{3-2\alpha}}+\frac{Q}{2}(C_1D+C_2L)T^{\frac{\alpha}{3-2\alpha} +\beta}\\
&=6(C_1D+C_2L)T^{\frac{2-\alpha}{3-2\alpha}-\beta}\lambda^2+\frac{G^2}{3(C_1D+C_2L)}T^{\frac{2-\alpha}{3-2\alpha}+\beta}\\
&\quad + (C_0+C_1D+C_2L)T^{\frac{2-\alpha}{3-2\alpha}}+\frac{1}{2}(C_1D+C_2L)T^{\frac{2-\alpha}{3-2\alpha}+\beta}\\
&\leq 6(C_1D+C_2L)T^{\frac{2-\alpha}{3-2\alpha}-\beta}\lambda^2+\left(\frac{G^2}{3(C_1D+C_2L)}+\frac{3(C_0+C_1D+C_2L)}{2}\right)T^{\frac{2-\alpha}{3-2\alpha}+\beta}.
\end{align*}
Moreover, as $\bar g(\bmo)\leq 0$,
\begin{align*}
\mathbb{E}\left[\sum_{q=1}^Q\lambda_q\sum_{k=1}^{K} g_k^q(\bmo)\right]&=\sum_{q=1}^Q\mathbb{E}\left[\mathbb{E}\left[\lambda_q\sum_{k=1}^{K} g_k^q(\bmo)\mid \mathcal{F}_{(q-1)K}\right]\right]\\
&=\sum_{q=1}^Q\mathbb{E}\left[\lambda_q\sum_{k=1}^{K}\mathbb{E}\left[ g_k^q(\bmo)\mid \mathcal{F}_{(q-1)K}\right]\right]\\
&=\sum_{q=1}^Q\mathbb{E}\left[\lambda_q\sum_{k=1}^{K}\bar g(\bmo)\right]\\
&\leq 0.
\end{align*}
This implies that
\begin{align*}
&\mathbb{E}\left[\sum_{q=1}^Q\sum_{k=1}^{K} \left(f_k^q(\bm{x_k^q})-f_k^q(\bmo)\right) +\lambda \sum_{q=1}^Q\sum_{k=1}^{K} g_k^q(\bm{x_k^q})\right]\\
&\leq 6(C_1D+C_2L)T^{\frac{2-\alpha}{3-2\alpha}-\beta}\lambda^2+\left(\frac{G^2}{3(C_1D+C_2L)}+\frac{3(C_0+C_1D+C_2L)}{2}\right)T^{\frac{2-\alpha}{3-2\alpha}+\beta}.
\end{align*}
Next, we set 
$$\lambda = \frac{1}{12(C_1D+C_2L)T^{\frac{2-\alpha}{3-2\alpha}-\beta}}\left[\mathbb{E}\left[\sum_{q=1}^Q\sum_{k=1}^{K} g_k^q(\bm{x_k^q})\right]\right]_+.$$
Then we obtain
\begin{align*}
\mathbb{E}\left[\sum_{q=1}^Q\sum_{k=1}^{K} \left(f_k^q(\bm{x_k^q})-f_k^q(\bmo)\right) \right]
&\leq -\frac{1}{24(C_1D+C_2L)T^{\frac{2-\alpha}{3-2\alpha}-\beta}}\left[\mathbb{E}\left[\sum_{q=1}^Q\sum_{k=1}^{K} g_k^q(\bm{x_k^q})\right]\right]_+^2\\&\quad +\left(\frac{G^2}{3(C_1D+C_2L)}+\frac{3(C_0+C_1D+C_2L)}{2}\right)T^{\frac{2-\alpha}{3-2\alpha}+\beta}.
\end{align*}
As a result,
\begin{align*}
\mathbb{E}\left[\sum_{q=1}^Q\sum_{k=1}^{K} \left(f_k^q(\bm{x_k^q})-f_k^q(\bmo)\right) \right]
&\leq \left(\frac{G^2}{3(C_1D+C_2L)}+\frac{3(C_0+C_1D+C_2L)}{2}\right)T^{\frac{2-\alpha}{3-2\alpha}+\beta}.
\end{align*}
Moreover,
\begin{align*}
&\frac{1}{24(C_1D+C_2L)T^{\frac{2-\alpha}{3-2\alpha}-\beta}}\left[\mathbb{E}\left[\sum_{q=1}^Q\sum_{k=1}^{K} g_k^q(\bm{x_k^q})\right]\right]_+^2\\
&\leq -\mathbb{E}\left[\sum_{q=1}^Q\sum_{k=1}^{K} \left(f_k^q(\bm{x_k^q})-f_k^q(\bmo)\right) \right]+\left(\frac{G^2}{3(C_1D+C_2L)}+\frac{3(C_0+C_1D+C_2L)}{2}\right)T^{\frac{2-\alpha}{3-2\alpha}+\beta}\\
&\leq TDR+ \left(\frac{G^2}{3(C_1D+C_2L)}+\frac{3(C_0+C_1D+C_2L)}{2}\right)T^{\frac{2-\alpha}{3-2\alpha}+\beta}
\end{align*}
where the second inequality holds because $f_1,\ldots, f_T$ are $D$-Lipschitz. Then
\begin{align*}
&\mathbb{E}\left[\sum_{q=1}^Q\sum_{k=1}^{K} g_k^q(\bm{x_k^q})\right]\\
&\leq\left[\mathbb{E}\left[\sum_{q=1}^Q\sum_{k=1}^{K} g_k^q(\bm{x_k^q})\right]\right]_+\\
&\leq \sqrt{24(C_1D+C_2L)T^{\frac{2-\alpha}{3-2\alpha}-\beta}\left(TDR+ \left(\frac{G^2}{3(C_1D+C_2L)}+\frac{3(C_0+C_1D+C_2L)}{2}\right)T^{\frac{2-\alpha}{3-2\alpha}+\beta}\right)}\\
&=O\left(T^{\frac{5-3\alpha}{6-4\alpha}-\frac{\beta}{2}}\right).
\end{align*}
Then the result follows as
\begin{align*}
\mathbb{E}\left[\sum_{t=1}^T \left(f_t(\bmc)-f_t(\bmo)\right) \right]&=\mathbb{E}\left[\sum_{q=1}^Q\sum_{k=1}^{K} \left(f_k^q(\bm{x_k^q})-f_k^q(\bmo)\right) \right],\\
\mathbb{E}\left[\sum_{t=1}^T g_t(\bmc) \right]&=\mathbb{E}\left[\sum_{q=1}^Q\sum_{k=1}^{K} g_k^q(\bm{x_k^q}) \right],\\
\end{align*}
as required.
\end{proof}

\section{Performance Analysis of PMFWG (\cref{alg2})}

We provide the proof of~\cref{regret:ftpl} in \cref{ftpl:proof}. Then, in \cref{cmfwg:regret-proof} we show \cref{cmfwg:regret}.

\subsection{Regret of FTPL: Proof of \cref{regret:ftpl}}\label{ftpl:proof}

Recall that our adaptive variant of Follow-The-Perturbed-Leader proceeds as the following setup:
\begin{itemize}
\item Choose a random perturbation vector $\bmp$ from $[0,1/\delta]^d$ for some $\delta>0$ uniformly at random.
\item For $t=1,\ldots,T$:
\begin{itemize}
    \item Choose
    \[ \bmc \in \argmin_{\bmx \in \mathcal{X}} \left( \bmp + \sum_{s=1}^{t-1} \bmws \right)^\top \bmx. \]
    \item Observe $\bmwt$.
\end{itemize}
\end{itemize}

Define $M(\bmw) = \argmin_{\bmx \in \mathcal{X}} \bmw^\top \bmx$ for $\bmw\in\mathbb{R}^d$.

\begin{lemma}\label{lemma:BTLregret-bound-1}
The sequence $\{\bmwt\}_{t\in[T]}$ satisfies the following.
\[ \sum_{t=1}^T M\left( \sum_{s=1}^{t} \bmws \right)^\top \bmwt \leq \sum_{t=1}^T M\left( \sum_{s=1}^{T} \bmws\right)^\top \bmwt. \]
\end{lemma}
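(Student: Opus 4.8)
The plan is to prove this by induction on $T$, which is the classical ``be-the-leader'' argument (in the sense of Kalai--Vempala). The one thing to keep in mind throughout is that, by the definition $M(\bmw) = \argmin_{\bmx\in\mathcal{X}}\bmw^\top\bmx$, we have $M(\bmw)^\top\bmw = \min_{\bmx\in\mathcal{X}}\bmw^\top\bmx$, and more generally $M(\bmw)^\top\bmw \le \bmw^\top\bmx$ for every $\bmx\in\mathcal{X}$; this optimality inequality is the only non-bookkeeping ingredient. Note also that the right-hand side of the claimed inequality is simply $M\big(\sum_{s=1}^{T}\bmws\big)^\top\big(\sum_{t=1}^{T}\bmwt\big) = \min_{\bmx\in\mathcal{X}}\big(\sum_{t=1}^{T}\bmwt\big)^\top\bmx$, i.e.\ the loss of the best fixed point in hindsight, so the lemma says that replaying the running leader does no worse than the best fixed action.

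For the base case $T=1$ both sides equal $M(\bm{w_1})^\top\bm{w_1}$. For the inductive step, assume the statement for $T-1$ applied to the prefix sequence $\bm{w_1},\ldots,\bm{w_{T-1}}$. Peel off the last term to write $\sum_{t=1}^{T} M\big(\sum_{s=1}^{t}\bmws\big)^\top\bmwt = \sum_{t=1}^{T-1} M\big(\sum_{s=1}^{t}\bmws\big)^\top\bmwt + M\big(\sum_{s=1}^{T}\bmws\big)^\top\bm{w_T}$, and use the induction hypothesis to bound the first sum by $\sum_{t=1}^{T-1} M\big(\sum_{s=1}^{T-1}\bmws\big)^\top\bmwt = M\big(\sum_{s=1}^{T-1}\bmws\big)^\top\big(\sum_{s=1}^{T-1}\bmws\big)$. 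Since $M\big(\sum_{s=1}^{T}\bmws\big)\in\mathcal{X}$, the optimality of the prefix-leader gives $M\big(\sum_{s=1}^{T-1}\bmws\big)^\top\big(\sum_{s=1}^{T-1}\bmws\big) \le \big(\sum_{s=1}^{T-1}\bmws\big)^\top M\big(\sum_{s=1}^{T}\bmws\big)$. Adding back the term $M\big(\sum_{s=1}^{T}\bmws\big)^\top\bm{w_T}$ to both sides and recombining yields $\sum_{t=1}^{T} M\big(\sum_{s=1}^{t}\bmws\big)^\top\bmwt \le M\big(\sum_{s=1}^{T}\bmws\big)^\top\big(\sum_{s=1}^{T}\bmws\big) = \sum_{t=1}^{T} M\big(\sum_{s=1}^{T}\bmws\big)^\top\bmwt$, which is the statement for $T$.

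I do not expect a genuine obstacle here; the only points requiring a little care are invoking the induction hypothesis on the \emph{prefix} sequence (not the full one) and applying the optimality of $M\big(\sum_{s=1}^{T-1}\bmws\big)$ against the feasible point $M\big(\sum_{s=1}^{T}\bmws\big)$ rather than against any benchmark in $\mathcal{X}$. Downstream, this lemma is what lets us telescope the ``be-the-leader'' regret in the analysis of FTPL (\cref{regret:ftpl}): the leftover comparison between the perturbed leader (played by \cref{ftpl}) and the running leader is controlled by the perturbation scale $\delta$ together with the magnitudes $\|\bmwt\|$, and it is precisely the latter that carries the dependence on the dual iterates $\lambda_t$ through $h^k_t$.
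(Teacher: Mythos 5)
Your proof is correct and is essentially the same argument as the paper's: induction on the horizon, peeling off the last term, invoking the induction hypothesis on the prefix sequence, and then using the optimality of the prefix-leader $M\bigl(\sum_{s=1}^{T-1}\bmws\bigr)$ against the feasible point $M\bigl(\sum_{s=1}^{T}\bmws\bigr)$ before recombining. The only difference is cosmetic indexing ($T-1\to T$ rather than $T\to T+1$).
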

\begin{proof}
We proceed by induction. The case $T = 1$ is obvious. Now assume it holds for $T \geq 1$, i.e.,
\[ \sum_{t=1}^T M\left( \sum_{s=1}^{t} \bmws \right)^\top \bmwt \leq \sum_{t=1}^T M\left( \sum_{s=1}^{T} \bmws \right)^\top \bmwt. \]
Consider
\begin{align*}
    \sum_{t=1}^{T+1} M\left( \sum_{s=1}^{t} \bmws \right)^\top \bmwt &= \sum_{t=1}^T M\left( \sum_{s=1}^{t} \bmws \right)^\top \bmwt + M\left( \sum_{s=1}^{T+1} \bmws \right)^\top \bm{w_{T+1}}\\
    &\leq M\left( \sum_{s=1}^{T+1} \bmws\right)^\top \bm{w_{T+1}} + \sum_{t=1}^T M\left( \sum_{s=1}^{T} \bmws \right)^\top \bmwt\\
    &= M\left( \sum_{s=1}^{T+1} \bmws \right)^\top \bm{w_{T+1}} + M\left( \sum_{s=1}^{T} \bmws \right)^\top \left( \sum_{t=1}^T \bmwt \right)\\
    &\leq M\left( \sum_{s=1}^{T+1} \bmws \right)^\top \bm{w_{T+1}} + M\left( \sum_{s=1}^{T} \bmws + \bm{w_{T+1}} \right)^\top \left( \sum_{t=1}^T \bmwt \right)\\
    &= \sum_{t=1}^{T+1} M\left( \sum_{s=1}^{T+1} \bmws \right)^\top \bmwt
\end{align*}
where the first inequality comes from the induction hypothesis and the second inequality is because of the definition of $M(\cdot)$, as required.
\end{proof}

\begin{lemma}\label{lemma:BTLregret-bound-2}
Define $\bm{p_0} := \bm{0}$. Then for any sequence $\bm{p_1},\ldots,\bm{p_T}$ we have
\[ \sum_{t=1}^T M\left( \bm{p_t} + \sum_{s=1}^{t} \bmws \right)^\top \bmwt \leq \sum_{t=1}^T M\left( \sum_{s=1}^{T} \bmws \right)^\top \bmwt + R \sum_{t=1}^T \|\bm{p_t} - \bm{p_{t-1}}\|_{\infty}. \]
Consequently, if $\bm{p_t} = \bmp$ for all $t \in [T]$, we have
\[ \sum_{t=1}^T M\left( \bmp + \sum_{s=1}^{t} \bmws \right)^\top \bmwt \leq \sum_{t=1}^T M\left( \sum_{s=1}^{T} \bmws\right)^\top \bmwt + R \|\bmp\|_{\infty}. \]
\end{lemma}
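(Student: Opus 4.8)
The plan is to reduce the perturbed ``be-the-leader'' bound to the unperturbed one, \Cref{lemma:BTLregret-bound-1}, by interleaving the perturbation \emph{increments} into the loss sequence. Write $W_t := \sum_{s=1}^{t}\bmws$, with $W_0 := \bm 0$, and recall $\bm{p_0}=\bm 0$. First I would build the augmented sequence of $2T$ vectors $\bm{u_1},\dots,\bm{u_{2T}}$ with $\bm{u_{2t-1}} := \bm{p_t}-\bm{p_{t-1}}$ and $\bm{u_{2t}} := \bmwt$ for $t=1,\dots,T$. Its partial sums are $\sum_{i\le 2t-1}\bm{u_i}=\bm{p_t}+W_{t-1}$ and $\sum_{i\le 2t}\bm{u_i}=\bm{p_t}+W_t$, and the total sum is $\bm{p_T}+W_T$. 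Since the proof of \Cref{lemma:BTLregret-bound-1} uses nothing about the vectors beyond the definition of $M(\cdot)$, it applies verbatim to $\bm{u_1},\dots,\bm{u_{2T}}$, which gives
\[ \sum_{t=1}^{T}\Bigl(M(\bm{p_t}+W_{t-1})^\top(\bm{p_t}-\bm{p_{t-1}}) + M(\bm{p_t}+W_t)^\top\bmwt\Bigr) \;\le\; M(\bm{p_T}+W_T)^\top(\bm{p_T}+W_T). \]

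Next I would bound the right-hand side by plugging $M(W_T)\in\mathcal X$ into the minimization defining $M(\bm{p_T}+W_T)$: namely $M(\bm{p_T}+W_T)^\top(\bm{p_T}+W_T)\le \bm{p_T}^\top M(W_T)+\sum_{t=1}^{T} M(W_T)^\top\bmwt$. Substituting this in, moving the increment terms to the right, and using the telescoping identity $\bm{p_T}=\sum_{t=1}^{T}(\bm{p_t}-\bm{p_{t-1}})$ yields
\[ \sum_{t=1}^{T} M(\bm{p_t}+W_t)^\top\bmwt \;\le\; \sum_{t=1}^{T} M(W_T)^\top\bmwt + \sum_{t=1}^{T}(\bm{p_t}-\bm{p_{t-1}})^\top\bigl(M(W_T)-M(\bm{p_t}+W_{t-1})\bigr). \]
Finally, bounding each summand on the right by Hölder ($|\bm a^\top\bm b|\le\|\bm a\|_\infty\|\bm b\|_1$, the relevant pairing throughout this section) together with the diameter bound $\|\bmx-\bmy\|_1\le R$ from \Cref{basic-assumption} gives the first inequality. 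The ``consequently'' part is immediate: with $\bm{p_t}=\bmp$ for all $t$ we have $\|\bm{p_1}-\bm{p_0}\|_\infty=\|\bmp\|_\infty$ and $\|\bm{p_t}-\bm{p_{t-1}}\|_\infty=0$ for $t\ge 2$, so the error term collapses to $R\|\bmp\|_\infty$.

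I do not expect a real obstacle, since the algebra is routine; the single idea that makes the argument short is recognizing that the perturbation should enter as the incremental ``loss'' vectors $\bm{p_t}-\bm{p_{t-1}}$ inserted just before each $\bmwt$, so that the perturbed iterates $M(\bm{p_t}+W_t)$ become genuine leaders for the augmented sequence and \Cref{lemma:BTLregret-bound-1} can be invoked as a black box. The only care needed is the index bookkeeping for the interleaved partial sums, and keeping the final error term expressed via the $\ell_\infty$ norm of the increments (matched with the $\ell_1$ diameter $R$); a naive term-by-term comparison of $M(\bm{p_t}+W_t)$ with $M(W_t)$ does not close, which is why the cumulative, interleaved argument is used.
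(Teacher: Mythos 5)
Your proof is correct and takes essentially the same route as the paper: the paper invokes \cref{lemma:BTLregret-bound-1} on the merged length-$T$ sequence $\bm{w_t'}=\bmwt+\bm{p_t}-\bm{p_{t-1}}$ rather than your interleaved $2T$-step sequence, but both then swap in $M\left(\sum_{s=1}^T\bmws\right)$ via the definition of $M(\cdot)$, telescope $\bm{p_T}$ into increments, and bound the residual inner products by $R\|\bm{p_t}-\bm{p_{t-1}}\|_\infty$ using the $\ell_1$-diameter. The only (immaterial) difference is that your error term pairs the increments with $M(\bm{p_t}+W_{t-1})$ instead of $M\left(\bm{p_t}+\sum_{s=1}^{t}\bmws\right)$, yielding the identical bound.
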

\begin{proof}
We invoke \cref{lemma:BTLregret-bound-1} with $\bm{w_t'} = \bmwt + \bm{p_t} - \bm{p_{t-1}}$ to obtain
\begin{align*}
    &\sum_{t=1}^T M\left( \bm{p_t} + \sum_{s=1}^{t} \bmws \right)^\top (\bmwt + \bm{p_t} - \bm{p_{t-1}})\\
    &\leq \sum_{t=1}^T M\left( \bm{p_T} + \sum_{s=1}^{T} \bmws \right)^\top (\bmwt + \bm{p_t} - \bm{p_{t-1}})\\
    &= M\left( \bm{p_T} + \sum_{s=1}^{T} \bmws \right)^\top \left(\bm{p_T}+ \sum_{t=1}^T \bmwt \right) \\
    &\leq M\left( \sum_{s=1}^{T} \bmws \right)^\top \left(  \bm{p_T}+\sum_{t=1}^T \bmwt\right)\\
    &= M\left( \sum_{s=1}^{T} \bmws \right)^\top \left(  \sum_{t=1}^T \bmwt\right)+ M\left( \sum_{s=1}^{T} \bmws \right)^\top \left( \sum_{t=1}^T (\bm{p_t} - \bm{p_{t-1}}) \right)
\end{align*}
where the second inequality follows from the definition of $M(\cdot)$.
This inequality implies that
\begin{align*}
    &\sum_{t=1}^T M\left( \bm{p_t} + \sum_{s=1}^{t} \bmws \right)^\top \bmwt\\
    &\leq M\left( \sum_{s=1}^{T} \bmws \right)^\top \left(  \sum_{t=1}^T \bmwt\right) + \sum_{t=1}^T \left( M\left( \sum_{s=1}^{T} \bmws \right) - M\left( \bm{p_t} + \sum_{s=1}^{t} \bmws\right) \right)^\top \left(  \bm{p_t} - \bm{p_{t-1}} \right)\\
    &\leq M\left( \sum_{s=1}^{T} \bmws\right)^\top \left(  \sum_{t=1}^T \bmwt \right) + R \sum_{t=1}^T \|\bm{p_t} - \bm{p_{t-1}}\|_{\infty},
\end{align*}
as required.
\end{proof}

\begin{lemma}\label{lemma:FTPL-perturbation-bound}
Let $\bmp$ be a vector sampled from $[0,1/\delta]^n$ uniformly at random, and $\bmw,\bm{w'},\bm{w''}$ be three arbitrary fixed vectors. Then
\[ \bbE \left[ \left( M(\bmw+\bmp) - M(\bmw+\bm{w'}+\bmp) \right)^\top \bm{w''} \right] \leq R \|\bm{w''}\|_\infty \sum_{i=1}^{d} \min\left\{1, \delta|w_i'| \right\}. \]
\end{lemma}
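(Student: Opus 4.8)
The plan is a direct \emph{symmetric-difference} computation on the law of $\bmp$, rather than the usual estimate on the probability that the minimizer jumps; the latter is the wrong quantity here, since for a strictly convex $\mathcal{X}$ the minimizer changes with probability one yet the claimed bound still holds. Fix any measurable selection $\bm{x}\mapsto M(\bm{x})$ of the argmin and abbreviate $\phi(\bm{y}):=\bm{w''}^\top M(\bmw+\bm{y})$. Since $M(\cdot)\in\mathcal{X}$, the function $\phi$ takes values in $[m,M]$ with $m=\min_{\bm{x}\in\mathcal{X}}\bm{w''}^\top\bm{x}$ and $M=\max_{\bm{x}\in\mathcal{X}}\bm{w''}^\top\bm{x}$, so that $M-m=\max_{\bm{x},\bm{y}\in\mathcal{X}}\bm{w''}^\top(\bm{x}-\bm{y})\le\|\bm{w''}\|_\infty\max_{\bm{x},\bm{y}\in\mathcal{X}}\|\bm{x}-\bm{y}\|_1\le R\|\bm{w''}\|_\infty$ by H\"older and \cref{basic-assumption}. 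Note that $M(\bmw+\bm{w'}+\bmp)=M(\bmw+(\bmp+\bm{w'}))$, so the left-hand side of the lemma is exactly $\bbE[\phi(\bmp)-\phi(\bmp+\bm{w'})]$.

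Writing $B=[0,1/\delta]^d$, so that $\bmp$ is uniform on $B$ with density $\delta^d$, I would substitute $\bm{y}\mapsto\bm{y}+\bm{w'}$ in the second term to obtain $\bbE[\phi(\bmp)-\phi(\bmp+\bm{w'})]=\delta^d\big(\int_B\phi(\bm{y})\,d\bm{y}-\int_{B+\bm{w'}}\phi(\bm{y})\,d\bm{y}\big)$. Cancelling the common integral over $C:=B\cap(B+\bm{w'})$, and using that $B\setminus C$ and $(B+\bm{w'})\setminus C$ have the same volume $\mathrm{vol}(B)-\mathrm{vol}(C)$ by translation invariance together with the bounds $\phi\le M$ on $B\setminus C$ and $\phi\ge m$ on $(B+\bm{w'})\setminus C$, I get
\[
\bbE\big[\phi(\bmp)-\phi(\bmp+\bm{w'})\big]\ \le\ \delta^d\,(M-m)\big(\mathrm{vol}(B)-\mathrm{vol}(C)\big)\ =\ (M-m)\Big(1-\prod_{i=1}^d\max\{0,\,1-\delta|w_i'|\}\Big),
\]
where the last equality uses $\mathrm{vol}(B)=\delta^{-d}$ and $\mathrm{vol}(C)=\prod_{i=1}^d\mathrm{length}\big([0,1/\delta]\cap[w_i',1/\delta+w_i']\big)=\prod_{i=1}^d\max\{0,1/\delta-|w_i'|\}$.

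To conclude I would invoke the elementary inequality $1-\prod_i a_i\le\sum_i(1-a_i)$ for $a_i\in[0,1]$ (expand as the telescoping sum $\sum_i(1-a_i)\prod_{j<i}a_j\le\sum_i(1-a_i)$) with $a_i=\max\{0,1-\delta|w_i'|\}$, so that $1-a_i=\min\{1,\delta|w_i'|\}$; combined with $M-m\le R\|\bm{w''}\|_\infty$ this gives precisely $R\|\bm{w''}\|_\infty\sum_{i=1}^d\min\{1,\delta|w_i'|\}$, as required. The only genuinely delicate point is the measurability of the selection $M$ so that the integrals are well-defined, which is standard (the argmin is unique off a Lebesgue-null set of objective vectors, or one fixes a lexicographic tie-break); everything else is bookkeeping, and I expect the only real conceptual step to be recognizing that one should compare the two \emph{distributions} $\bmp$ and $\bmp+\bm{w'}$ through the volume of their symmetric difference and the bounded range of $\phi$, instead of trying to control when the linear-minimization oracle changes its output.
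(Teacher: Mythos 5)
Your proof is correct and takes essentially the same route as the paper's: both cancel the contribution on the intersection of the two translated cubes, bound the remaining difference by the $\ell_1$-diameter $R$ times $\|\bm{w''}\|_\infty$ times the (normalized) measure of the leftover region, and then bound that measure coordinatewise, your inequality $1-\prod_i a_i\le\sum_i(1-a_i)$ being exactly the paper's union bound in disguise. The differences are purely presentational (integrals and a change of variables versus conditional distributions and probabilities), plus your extra, reasonable remark on measurability of the argmin selection.
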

\begin{proof}
We consider the hypercubes $C = \bmw + [0,1/\delta]^d$, $C' = \bmw+\bm{w'}+[0,1/\delta]^d$. Note that $\bm{\tilde{p}} := \bmw+\bmp$ is uniformly distributed over $C$ and $\bm{\tilde{p}'} := \bmw+\bm{w'}+\bmp$ is uniformly distributed over $C'$. Let $C_\cap = C \cap C'$ and $C_\circ = (C \cup C') \setminus (C \cap C')$. Now
\begin{align*}
    &\bbE \left[ \left( M(\bm{\Tilde{p}}) - M(\bm{\Tilde{p}'}) \right)^\top \bm{w''} \right] \\&= \bbE \left[  M(\bm{\tilde{p}}) ^\top \bm{w''} \mid \bm{\tilde{p}} \in C_{\cap} \right] \bbP[ \bm{\tilde{p}} \in C_{\cap}] + \bbE \left[  M(\bm{\tilde{p}})^\top \bm{w''} \mid \bm{\tilde{p}} \in C_{\circ} \right] \bbP[ \bm{\tilde{p}} \in C_{\circ}]\\
    &\quad - \bbE \left[  M(\bm{\tilde{p}'}) ^\top \bm{w''} \mid \bm{\tilde{p}'} \in C_{\cap} \right] \bbP[ \bm{\tilde{p}'} \in C_{\cap}] - \bbE \left[  M(\bm{\tilde{p}'}) ^\top \bm{w''} \mid \bm{\tilde{p}'} \in C_{\circ} \right] \bbP[ \bm{\tilde{p}'} \in C_{\circ}].
\end{align*}
Since $\bmp$ is uniform on the hypercube, the distribution of $\bm{\tilde{p}} \mid \bm{\tilde{p}} \in C_{\cap}$ is equal to the distribution of $\bm{\tilde{p}'} \mid \bm{\tilde{p}'} \in C_{\cap}$, and also the probability that each vector is in $C_{\cap}$ is equal. Therefore,
\begin{align*}
    &\bbE \left[ \left( M(\bm{\tilde{p}}) - M(\bm{\tilde{p}'}) \right)^\top \bm{w''} \right] \\
    &= \bbE \left[  M(\bm{\tilde{p}}) ^\top \bm{w''} \mid \bm{\tilde{p}} \in C_{\circ} \right] \bbP[ \bm{\tilde{p}} \in C_{\circ}] - \bbE \left[  M(\bm{\tilde{p}'}) ^\top \bm{w''} \mid \bm{\tilde{p}'} \in C_{\circ} \right] \bbP[ \bm{\tilde{p}'} \in C_{\circ}]. 
\end{align*}
Since $\bbP[\bm{\tilde{p}} \in C_{\cap}] = \bbP[\bm{\tilde{p}'} \in C_{\cap}]$ we have $\bbP[\bm{\tilde{p}} \in C_{\circ}] = \bbP[\bm{\tilde{p}'} \in C_{\circ}]$, hence
\begin{align*}
    &\bbE \left[ \left( M(\bm{\tilde{p}}) - M(\bm{\tilde{p}'}) \right)^\top \bm{w''} \right] \\&= \bbE \left[  M(\bm{\tilde{p}}) ^\top \bm{w''} \mid \bm{\tilde{p}} \in C_{\circ} \right] \bbP[ \bm{\tilde{p}} \in C_{\circ}] - \bbE \left[  M(\bm{\tilde{p}'}) ^\top \bm{w''} \mid \bm{\tilde{p}} \in C_{\circ} \right] \bbP[ \bm{\tilde{p}'} \in C_{\circ}]\\
    &= \bbP[ \bm{\tilde{p}} \in C_{\circ} ] \left( \bbE \left[  M(\bm{\tilde{p}}) \mid \bm{\tilde{p}} \in C_{\circ} \right] - \bbE \left[  M(\bm{\tilde{p}'}) \mid \bm{\tilde{p}'} \in C_{\circ} \right] \right)^\top \bm{w''}\\
    &\leq R \|\bm{w''}\|_\infty \bbP[ \bm{\tilde{p}} \in C_{\circ} ].
\end{align*}
We now estimate $\bbP[ \bm{\tilde{p}} \in C_{\circ} ]$. If $\bm{\tilde{p}} \in C \setminus C'$ then there exists some $i \in [d]$ such that $\tilde{p}_i \in [w_i,w_i + 1/\delta]$ but $\tilde{p}_i \not\in [w_i + w_i', w_i + w_i' + 1/\delta]$. We can compute that
\[ \bbP[ \tilde{p}_i \in [w_i,w_i + 1/\delta] \setminus [w_i+w_i', w_i+w_i'+1/\delta] ] = \begin{cases}
    1, & |w_i'| \geq 1/\delta \\
    \delta{|w_i'|}, &|w_i'| < 1/\delta
\end{cases} = \min\left\{1, \delta{|w_i'|} \right\}. \]
Now observe that
\begin{align*}
    \bbP\left[ \bm{\tilde{p}} \in C_{\circ} \right] &= \bbP\left[ \bm{\tilde{p}} \in C \setminus C' \right]\\
    &= \bbP\left[ \exists i \in [d] \text{ s.t. } \tilde{p}_i \in [w_i,w_i + 1/\delta] \setminus [w_i+w_i', w_i+w_i'+1/\delta] \right]\\
    &= 1 - \bbP\left[ \forall i \in [d], \  \tilde{p}_i \in [w_i,w_i + 1/\delta] \cap [w_i+w_i', w_i+w_i'+1/\delta] \right]\\
    &= 1 - \prod_{i=1}^d \left( 1 - \bbP[ \tilde{p}_i \in [w_i,w_i + 1/\delta] \setminus [w_i+w_i', w_i+w_i'+1/\delta] ] \right)\\
    &= 1 - \prod_{i=1}^d \left( 1 - \min\left\{1, \delta{|w_i'|} \right\} \right)\\
    &= 1 - \prod_{i=1}^d \max\left\{0, 1- \delta{|w_i'|} \right\}\\
    &= \begin{cases}
        1 - \prod_{i=1}^d \left( 1- \delta{|w_i'|} \right), & \|\bm{w'}\|_{\infty} \leq 1/\delta\\
        1, & \|\bm{w'}\|_{\infty} > 1/\delta,
    \end{cases}
\end{align*}
where the fourth equality follows since each $\Tilde{p}_i$ is independent.
A union bound then gives
\begin{align*}
    \bbP\left[ \bm{\bm{\tilde{p}}} \in C_{\circ} \right] &= \bbP\left[ \bm{\bm{\tilde{p}}} \in C \setminus C' \right]\\
    &= \bbP\left[ \exists i \in [d] \text{ s.t. } \tilde{p}_i \in [w_i,w_i + 1/\delta] \setminus [w_i+w_i', w_i+w_i'+1/\delta] \right]\\
    &\leq \sum_{i=1}^{d} \bbP\left[ \tilde{p}_i \in [w_i,w_i + 1/\delta] \setminus [w_i+w_i', w_i+w_i'+1/\delta] \right]\\
    &= \sum_{i=1}^{d} \min\left\{1, \delta{|w_i'|} \right\},
\end{align*}
as required.
\end{proof}

\begin{theorem}[Follow-the-Perturbed-Leader regret bound]\label{thm:FTPL-regret-bound}
Let $\bm{x_1},\ldots,\bm{x_T}$ be the sequence of decisions chosen by FTPL for the sequence of linear vectors $\bm{w_1},\ldots,\bm{w_T}$. Then
\[ \bbE\left[ \sum_{t=1}^T \bmwt^\top \bmc \right] - \min_{\bmx \in \mathcal{X}} \sum_{t=1}^T \bmwt^\top \bmx \leq \frac{R}{\delta} +  \delta d R \sum_{t=1}^T\|\bmwt\|_\infty^2%
\]
where the expectation is taken with respect to the randomness of the random perturbation vectors.
\end{theorem}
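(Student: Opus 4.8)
The plan is to run the classical ``be-the-leader'' analysis of Follow-the-Perturbed-Leader, decomposing the regret into (i) the regret of a look-ahead comparator that is allowed to peek at the current loss vector and (ii) a one-step stability term measuring how much the FTPL iterate moves when the current loss is incorporated. Write $W_t=\sum_{s=1}^t\bmws$ with $W_0=\bm{0}$; then at round $t$ FTPL plays $\bmc=M(\bmp+W_{t-1})$, and since $\sum_{t=1}^T\bmwt^\top\bmx=W_T^\top\bmx$ we have $\min_{\bmx\in\mathcal X}\sum_{t=1}^T\bmwt^\top\bmx=M(W_T)^\top W_T=\sum_{t=1}^T M(W_T)^\top\bmwt$.

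First I would start from the identity
\[
\sum_{t=1}^T\bmwt^\top\bmc=\sum_{t=1}^T M(\bmp+W_t)^\top\bmwt+\sum_{t=1}^T\bigl(M(\bmp+W_{t-1})-M(\bmp+W_t)\bigr)^\top\bmwt,
\]
and handle the first sum with \cref{lemma:BTLregret-bound-2} applied to the constant sequence $\bm{p_t}=\bmp$, which gives $\sum_{t=1}^T M(\bmp+W_t)^\top\bmwt\le\sum_{t=1}^T M(W_T)^\top\bmwt+R\|\bmp\|_\infty$. Taking expectation over $\bmp$ and using that $\bmp$ is uniform on $[0,1/\delta]^d$, so $\|\bmp\|_\infty\le 1/\delta$ deterministically, this term contributes at most $\min_{\bmx\in\mathcal X}\sum_{t=1}^T\bmwt^\top\bmx+R/\delta$.

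For the stability term I would bound each summand in expectation using \cref{lemma:FTPL-perturbation-bound}, invoked with $\bmw=W_{t-1}$, $\bm{w'}=\bmwt$, and $\bm{w''}=\bmwt$ (legitimate since the $\bmwt$ are fixed and $\bmp$ is the only source of randomness). This yields
\[
\bbE\bigl[\bigl(M(\bmp+W_{t-1})-M(\bmp+W_t)\bigr)^\top\bmwt\bigr]\le R\|\bmwt\|_\infty\sum_{i=1}^d\min\bigl\{1,\delta|(\bmwt)_i|\bigr\}\le \delta d R\|\bmwt\|_\infty^2,
\]
where the final inequality uses $\min\{1,\delta|(\bmwt)_i|\}\le\delta|(\bmwt)_i|$ coordinatewise together with $\sum_{i=1}^d|(\bmwt)_i|\le d\|\bmwt\|_\infty$. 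Summing over $t=1,\ldots,T$ and combining with the bound on the first sum produces exactly $\bbE\bigl[\sum_{t=1}^T\bmwt^\top\bmc\bigr]-\min_{\bmx\in\mathcal X}\sum_{t=1}^T\bmwt^\top\bmx\le R/\delta+\delta d R\sum_{t=1}^T\|\bmwt\|_\infty^2$.

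The only point that needs care is the off-by-one bookkeeping between the played iterate $M(\bmp+W_{t-1})$ and the look-ahead leader $M(\bmp+W_t)$, and feeding \cref{lemma:BTLregret-bound-2} and \cref{lemma:FTPL-perturbation-bound} the correct arguments; the rest is routine. No concentration or martingale machinery is needed here, because the loss vectors $\bmwt$ are treated as fixed and a single shared perturbation $\bmp$ is the entire source of randomness.
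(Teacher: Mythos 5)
Your proposal is correct and follows essentially the same route as the paper: the same decomposition into the look-ahead ("be-the-leader") term, handled via \cref{lemma:BTLregret-bound-2} with the constant perturbation sequence and $\|\bmp\|_\infty\leq 1/\delta$, plus the one-step stability term bounded in expectation by \cref{lemma:FTPL-perturbation-bound} with $\bmw=W_{t-1}$, $\bm{w'}=\bm{w''}=\bmwt$ and the estimate $\sum_{i=1}^d\min\{1,\delta|w_{t,i}|\}\leq \delta d\|\bmwt\|_\infty$. No gaps; this matches the paper's argument step for step.
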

\begin{proof}
According to \cref{lemma:BTLregret-bound-2} we have
\begin{align*}
    \sum_{t=1}^T M\left( \bmp + \sum_{s=1}^{t} \bmws \right)^\top \bmwt &\leq \sum_{t=1}^T M\left( \sum_{s=1}^{T} \bmws \right)^\top \bmwt + R \|\bmp\|_{\infty}\\
    &= \min_{\bmx \in \mathcal{X}} \sum_{t=1}^T \bmwt^\top \bmx + R \|\bmp\|_{\infty}\\
    &\leq \min_{\bmx \in \mathcal{X}} \sum_{t=1}^T \bmwt^\top \bmx + \frac{R}{\delta}
\end{align*}
where the final inequality follows since $\bmp\in[0,1/\delta]^d$. Remember that $\bmc = M\left( \bmp + \sum_{s=1}^{t-1} \bmws \right)$. Therefore,
\begin{align*}
    \sum_{t=1}^T \bmwt^\top \bmc &\leq \min_{\bmx \in \mathcal{X}} \sum_{t=1}^T \bmwt^\top \bmx + \frac{R}{\delta} + \sum_{t=1}^T \left( M\left( \bmp + \sum_{s=1}^{t-1} \bmws \right) - M\left( \bmp + \sum_{s=1}^{t} \bmws \right) \right)^\top \bmwt.
\end{align*}
Take expectations of both sides to get
\begin{align*}
    &\bbE_{\bmp}\left[ \sum_{t=1}^T \bmwt^\top \bmc \right]\\ &\leq \min_{\bmx \in \mathcal{X}} \sum_{t=1}^T \bmwt^\top \bmx + \frac{R}{\delta} + \sum_{t=1}^T \bbE_{\bmp}\left[ M\left( \bmp + \sum_{s=1}^{t-1} \bmws\right) - M\left( \bmp + \sum_{s=1}^{t} \bmws \right) \right]^\top \bmwt.
\end{align*}
We now apply \cref{lemma:FTPL-perturbation-bound} to each term in the last sum to get
\begin{align*}
    \bbE_{\bmp}\left[ \sum_{t=1}^T \bmwt^\top \bmc \right] - \min_{\bmx \in \mathcal{X}} \sum_{t=1}^T \bmwt^\top \bmx &\leq \frac{R}{\delta} + \sum_{t=1}^T R \|\bmwt\|_\infty \sum_{i=1}^{d} \min\left\{1, \delta |w_{t,i}| \right\}\\
    &\leq \frac{R}{\delta} + \sum_{t=1}^T R \|\bmwt\|_\infty \sum_{i=1}^{d} \delta |w_{t,i}| \\
    &\leq \frac{R}{\delta} + \delta dR \sum_{t=1}^T\|\bmwt\|_\infty^2,
\end{align*}
as required.
\end{proof}

Having proved \cref{thm:FTPL-regret-bound}, we can prove \cref{regret:ftpl}. Let us define filtration $\{\mathcal{F}_{t}:\ t\geq 0\}$ where $\mathcal{F}_0=\{\emptyset,\Omega\}$ and $\mathcal{F}_{t}=\sigma(\bm{\omega_1},\ldots,\bm{\omega_t})$ being the $\sigma$-algebra generated by the set of random samples $\{\bm{\omega_1},\ldots,\bm{\omega_t}\}$. Note that $\bmc$ and $\lambda_t$ are $\mathcal{F}_{t-1}$-measurable for all $t\geq 1$.
\begin{proof}[\bf Proof of \cref{regret:ftpl}]
Note that the expected regret of FTPL under the sequence of linear functions $h_1^k(\bm{v}),\ldots,h_T^k(\bm{v})$ can be bounded using \cref{thm:FTPL-regret-bound}, as follows. 
\begin{align}\label{ftpl:eq1}
    \begin{aligned}
        \mathbb{E}\left[\sum_{t=1}^T h_t^k(\bm{v_t^k})-\min_{\bm{v}\in\mathcal{X}}\sum_{t=1}^T h_t^k(\bm{v})\right]&=\mathbb{E}\left[\mathbb{E}\left[\sum_{t=1}^T h_t^k(\bm{v_t^k})\mid \mathcal{F}_T\right]-\min_{\bm{v}\in\mathcal{X}}\sum_{t=1}^T h_t^k(\bm{v})\right]\\
        &\leq \mathbb{E}\left[\frac{R}{\delta} + \delta dR\sum_{t=1}^T \|\grad f_t(\bm{x_t^k})+\lambda_t\grad g_t(\bm{x_t^k})\|_\infty ^2\right]\\
        &\leq \mathbb{E}\left[\frac{R}{\delta} + 2\delta dR\sum_{t=1}^T\left(\|\grad f_t(\bm{x_t^k})\|_\infty^2 + \lambda_t^2\|\grad g_t(\bm{x_t^k})\|_\infty^2\right)\right]
    \end{aligned}
\end{align}
where the first equality holds due to the tower rule, the first inequality follows from~\cref{thm:FTPL-regret-bound}, and the second inequality holds because $\|\bmx + \bmy\|_\infty^2\leq (\|\bmx\|_\infty + \|\bmy\|_\infty)^2\leq 2\|\bmx\|_\infty^2 + 2\|\bmy\|_\infty^2$ for any $\bmx,\bmy\in\mathbb{R}^d$. Note that
\begin{align}\label{ftpl:eq2}
    \begin{aligned}
        &\mathbb{E}\left[\frac{R}{\delta} + 2\delta dR\sum_{t=1}^T\left(\|\grad f_t(\bm{x_t^k})\|_\infty^2 + \lambda_t^2\|\grad g_t(\bm{x_t^k})\|_\infty^2\right)\right]\\
        &=\frac{R}{\delta} +2\delta dR\sum_{t=1}^T\left( \|\grad f_t(\bm{x_t^k})\|_\infty^2 + \mathbb{E}\left[\lambda_t^2\|\grad g_t(\bm{x_t^k})\|_\infty^2\right]\right)\\
        &\leq\frac{R}{\delta} + 2\delta dR\sum_{t=1}^T\left(D^2 + D^2\mathbb{E}\left[\lambda_t^2\right]\right)
    \end{aligned}
\end{align}
where the first inequality follows from~\cref{basic-assumption}. Since $$\delta = \frac{1}{2D\sqrt{d}T^{\frac{1}{2}+\beta}},$$ it follows from~\eqref{ftpl:eq1} that
\begin{align*}
    \mathbb{E}\left[\sum_{t=1}^T h_t^k(\bm{v_t^k})-\min_{\bm{v}\in\mathcal{X}}\sum_{t=1}^T h_t^k(\bm{v})\right]&\leq \frac{R}{\delta}+{2R\delta d}\sum_{t=1}^T(D^2 + D^2\mathbb{E}\left[\lambda_t^2\right])\\
    &=RD\sqrt{d} \left(3T^{\frac{1}{2}+\beta} +   \frac{1}{T^{\frac{1}{2}+\beta}}\mathbb{E}\left[\sum_{t=1}^T\lambda_t^2\right]\right),
\end{align*}
as required.
\end{proof}

\subsection{Regret: Proof of \cref{cmfwg:regret}}\label{cmfwg:regret-proof}

\begin{lemma}\label{product}
Let $\gamma_k=2/(k+1)$ for $k\geq 1$. Then for any $\ell\leq K+1$,
$$
    \prod_{k=\ell}^{K}(1-\gamma_k)\leq\left(\frac{\ell+1}{K+2}\right)^2.$$
\end{lemma}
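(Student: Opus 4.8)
The plan is to evaluate the product in closed form by telescoping, and then compare the result termwise against the claimed bound. Since $\gamma_k = 2/(k+1)$, each factor simplifies to $1-\gamma_k = (k-1)/(k+1)$. Hence for $1\le \ell\le K$,
\[
\prod_{k=\ell}^{K}(1-\gamma_k)=\prod_{k=\ell}^{K}\frac{k-1}{k+1}=\frac{\prod_{j=\ell-1}^{K-1}j}{\prod_{j=\ell+1}^{K+1}j}=\frac{(\ell-1)\ell}{K(K+1)},
\]
where the factors $\ell+1,\ell+2,\dots,K-1$ cancel between numerator and denominator. For $\ell=K+1$ the product is empty and equals $1$, which is exactly the value of $(\ell-1)\ell/(K(K+1))$ at $\ell=K+1$, so the identity $\prod_{k=\ell}^{K}(1-\gamma_k)=(\ell-1)\ell/(K(K+1))$ holds for all $1\le\ell\le K+1$ (and when $\ell=1$ both sides are $0$).

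Next I would split the closed form into two ratios and bound each separately. Writing $\frac{(\ell-1)\ell}{K(K+1)}=\frac{\ell-1}{K}\cdot\frac{\ell}{K+1}$, it suffices to verify
\[
\frac{\ell-1}{K}\le\frac{\ell+1}{K+2}\qquad\text{and}\qquad \frac{\ell}{K+1}\le\frac{\ell+1}{K+2}.
\]
Cross-multiplying, the first is equivalent to $(\ell-1)(K+2)\le(\ell+1)K$, i.e. $2\ell-2\le 2K$, i.e. $\ell\le K+1$; the second is equivalent to $\ell(K+2)\le(\ell+1)(K+1)$, i.e. $\ell\le K+1$. Both hold by hypothesis, and since all four quantities are nonnegative, multiplying the two inequalities yields
\[
\prod_{k=\ell}^{K}(1-\gamma_k)=\frac{\ell-1}{K}\cdot\frac{\ell}{K+1}\le\left(\frac{\ell+1}{K+2}\right)^{2},
\]
which is the claim.

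The argument is entirely elementary, so there is no genuine obstacle; the only care needed is in the telescoping bookkeeping and in checking the edge cases $\ell=1$ and $\ell=K+1$. An alternative would be a short downward induction on $\ell$ (or on $K$), but the closed-form route above is cleaner and additionally records the exact value $\frac{(\ell-1)\ell}{K(K+1)}$ of the product, which could be reused elsewhere in the Frank-Wolfe analysis.
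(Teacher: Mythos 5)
Your proof is correct, but it takes a genuinely different route from the paper. You exploit the special form of the step size to telescope the product exactly: $1-\gamma_k=(k-1)/(k+1)$ gives the closed form $\prod_{k=\ell}^{K}(1-\gamma_k)=\frac{(\ell-1)\ell}{K(K+1)}$ (valid for $1\le\ell\le K+1$, with the empty-product and $\ell=1$ edge cases checked), and then compare the factors $\frac{\ell-1}{K}$ and $\frac{\ell}{K+1}$ termwise against $\frac{\ell+1}{K+2}$ via cross-multiplication, both reducing to $\ell\le K+1$. The paper instead handles $\ell=K+1$ separately and, for $\ell\le K$, uses the one-line estimate $1-\gamma_k\le e^{-\gamma_k}$ followed by the integral comparison $\sum_{k=\ell}^{K}\frac{2}{k+1}\ge\int_{\ell+1}^{K+2}\frac{2}{x}\,dx$, which yields exactly $\bigl(\frac{\ell+1}{K+2}\bigr)^2$. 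Your argument is more elementary (no exponential/logarithm) and actually sharper, since it records the exact value $\frac{(\ell-1)\ell}{K(K+1)}\le\bigl(\frac{\ell+1}{K+2}\bigr)^2$ of the product; the paper's exponential-plus-integral trick is less tied to the specific choice $\gamma_k=2/(k+1)$ and would survive step-size sequences for which the product does not telescope. The only care needed in your version, which you take, is the implicit restriction $\ell\ge 1$ (true in the lemma's use, where $\ell=k+1\ge 2$) so that all factors in the final multiplication are nonnegative.
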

\begin{proof}
If $\ell= K+1$, then both sides are equal to 1, so the inequality is satisfied. Assume that $\ell\leq K$. Then
$$
\prod_{k=\ell}^{K}(1-\gamma_k)\leq \exp\left(-\sum_{k=\ell}^{K}\frac{2}{k+1}\right)\leq\exp\left(-\int_{x=\ell+1}^{K+2}\frac{2}{x}dx\right)=\left(\frac{\ell+1}{K+2}\right)^2,$$
as required.
\end{proof}

First, we prove the following lemma, which holds because $f_t$ and $g_t$ are smooth (\cref{smoothness}). We closely follow the proof of~\citet[Theorem 1]{meta-frank-wolfe}. We define a constant $C_3$ as $$C_3= 4RD+5R^2L +\frac{R(4D+5RL)^2}{4D\sqrt{d}}+3RD\sqrt{d}.$$
\begin{lemma}\label{lemma2:regret1}
    Let $\gamma_k = 2/(k+1)$. Then for any $\bmx\in\mathcal{X}$,
    \begin{equation*}
        \mathbb{E}\left[\sum_{t=1}^T L_t(\bmc, \lambda_t)- \sum_{t=1}^T L_t(\bmx, \lambda_t)\right]\leq C_3T^{\frac{1}{2}+\beta}+ \frac{2RD\sqrt{d}}{T^{\frac{1}{2}+\beta}}\sum_{t=1}^T\mathbb{E}\left[\lambda_t^2\right].
    \end{equation*}
\end{lemma}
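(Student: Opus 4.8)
The plan is to run the Meta-Frank-Wolfe argument of \citet[Theorem 1]{meta-frank-wolfe}, viewing the augmented Lagrangian $L_t(\cdot,\lambda_t)$ as the loss that the inner Frank-Wolfe loop minimizes ``online'' at round $t$ and feeding its linearizations $h_t^k$ to the FTPL oracles as in \cref{ftpl}. First I would record what Assumptions~\ref{basic-assumption} and~\ref{smoothness} give: for each $t$ and each fixed $\lambda_t\ge 0$, the map $\bmx\mapsto L_t(\bmx,\lambda_t)$ is convex, its gradient $\grad_{\bmx}L_t(\bmx,\lambda_t)=\grad f_t(\bmx)+\lambda_t\grad g_t(\bmx)$ has $\ell_\infty$-norm at most $D(1+\lambda_t)$, and it is $L(1+\lambda_t)$-smooth with respect to $\|\cdot\|_1$; moreover this gradient is exactly the linear coefficient defining $h_t^k$, so $h_t^k(\bm{v})=\grad_{\bmx}L_t(\bm{x_t^k},\lambda_t)^\top\bm{v}$. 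Throughout I write $\bm{x_t^1},\ldots,\bm{x_t^{K+1}}$ for the inner iterates generated at round $t$, with $\bm{x_t^{K+1}}=\bmc$, and $\bm{v_t^k}$ for the FTPL output used at inner step $k$.

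Next comes the per-step descent inequality. Applying the $\|\cdot\|_1$-smoothness descent lemma to $\bm{x_t^{k+1}}=\bm{x_t^k}+\gamma_k(\bm{v_t^k}-\bm{x_t^k})$, using $\|\bm{v_t^k}-\bm{x_t^k}\|_1\le R$, then adding and subtracting $\grad_{\bmx}L_t(\bm{x_t^k},\lambda_t)^\top\bmx$ and invoking convexity to bound $\grad_{\bmx}L_t(\bm{x_t^k},\lambda_t)^\top(\bm{x_t^k}-\bmx)\ge L_t(\bm{x_t^k},\lambda_t)-L_t(\bmx,\lambda_t)$, I obtain
\begin{equation*}
L_t(\bm{x_t^{k+1}},\lambda_t)-L_t(\bmx,\lambda_t)\le(1-\gamma_k)\bigl(L_t(\bm{x_t^k},\lambda_t)-L_t(\bmx,\lambda_t)\bigr)+\gamma_k\bigl(h_t^k(\bm{v_t^k})-h_t^k(\bmx)\bigr)+\tfrac{\gamma_k^2R^2L(1+\lambda_t)}{2}.
\end{equation*}
Summing over $t\in[T]$, taking expectations, and writing $\rho_k:=\mathbb{E}\bigl[\sum_{t=1}^T\bigl(L_t(\bm{x_t^k},\lambda_t)-L_t(\bmx,\lambda_t)\bigr)\bigr]$, I would use \cref{regret:ftpl} — together with the pathwise inequality $\min_{\bm v\in\mathcal{X}}\sum_t h_t^k(\bm v)\le\sum_t h_t^k(\bmx)$ — to replace $\mathbb{E}\bigl[\sum_t(h_t^k(\bm{v_t^k})-h_t^k(\bmx))\bigr]$ by $\mathcal{R}^{\mathcal{E}}(T)$, obtaining the scalar recursion
\begin{equation*}
\rho_{k+1}\le(1-\gamma_k)\,\rho_k+\gamma_k\,\mathcal{R}^{\mathcal{E}}(T)+\tfrac{\gamma_k^2R^2L}{2}\Bigl(T+\mathbb{E}\bigl[\textstyle\sum_{t=1}^T\lambda_t\bigr]\Bigr),\qquad k=1,\ldots,K.
\end{equation*}

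Then I would unroll this recursion. Since $\gamma_1=1$, the $\rho_1$ term is killed by $\prod_{k=1}^K(1-\gamma_k)=0$; for the rest, \cref{product} (in fact the exact telescoping $\prod_{j=k+1}^K(1-\gamma_j)=\prod_{j=k+1}^K\frac{j-1}{j+1}=\frac{k(k+1)}{K(K+1)}$) shows that the weights multiplying $\mathcal{R}^{\mathcal{E}}(T)$ sum to $\sum_{k=1}^K\frac{2k}{K(K+1)}=1$ while those multiplying the smoothness error are $O(1/K)$, so $\rho_{K+1}\le \mathcal{R}^{\mathcal{E}}(T)+\frac{cR^2L}{K}\bigl(T+\mathbb{E}[\sum_t\lambda_t]\bigr)$ for an absolute constant $c$. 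Now I substitute $K=\lfloor T^{1/2+\beta}\rfloor$, so $1/K=O(T^{-1/2-\beta})$, and the bound $\mathcal{R}^{\mathcal{E}}(T)=RD\sqrt d\bigl(3T^{1/2+\beta}+T^{-1/2-\beta}\,\mathbb{E}[\sum_t\lambda_t^2]\bigr)$ from \cref{regret:ftpl}; using $T^{1/2-\beta}\le T^{1/2+\beta}$ and the weighted Young inequality $\lambda_t\le\frac a2\lambda_t^2+\frac1{2a}$, with $a$ chosen so that the residual $T^{-1/2-\beta}\mathbb{E}[\sum_t\lambda_t]$ term contributes precisely the missing $RD\sqrt d$ to the coefficient of $T^{-1/2-\beta}\mathbb{E}[\sum_t\lambda_t^2]$, everything collapses to $C_3T^{1/2+\beta}+\frac{2RD\sqrt d}{T^{1/2+\beta}}\mathbb{E}[\sum_t\lambda_t^2]$; the residual constants, after also accounting for the Lipschitz bound on $\grad_{\bmx}L_t$, are exactly what makes up $C_3$. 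Finally $\rho_{K+1}=\mathbb{E}\bigl[\sum_t(L_t(\bmc,\lambda_t)-L_t(\bmx,\lambda_t))\bigr]$ since $\bm{x_t^{K+1}}=\bmc$, which is the claimed bound.

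I expect the main difficulty to be bookkeeping rather than ideas: one must carry the $\lambda_t$-dependence through every estimate (the inner objective $L_t(\cdot,\lambda_t)$ has Lipschitz and smoothness moduli proportional to $1+\lambda_t$, and $\mathcal{R}^{\mathcal{E}}(T)$ itself already carries an $\mathbb{E}[\sum_t\lambda_t^2]$ term from \cref{regret:ftpl}), and then the split in Young's inequality must be calibrated against the $RD\sqrt d$ already produced by $\mathcal{R}^{\mathcal{E}}(T)$ so that the coefficient of $T^{-1/2-\beta}\mathbb{E}[\sum_t\lambda_t^2]$ lands \emph{exactly} on $2RD\sqrt d$ while every leftover term stays $O(T^{1/2+\beta})$ — this is precisely what pins down the value of $C_3$. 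A secondary point requiring care is the index alignment between the outer loop and the inner Frank-Wolfe/FTPL iterates together with the measurability bookkeeping, so that \cref{regret:ftpl}, whose bound is already stated in expectation, may be applied separately for each fixed $k$.
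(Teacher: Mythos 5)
Your proposal is correct and takes essentially the same route as the paper's proof: the per-step Frank--Wolfe descent inequality for the augmented Lagrangian (the paper writes it separately for $f_t$ and $\lambda_t g_t$, which is equivalent since the $-\tfrac{\theta}{2}\lambda_t^2$ terms cancel), the application of \cref{regret:ftpl} for each fixed inner index $k$ after interchanging the sums over $t$ and $k$ (your scalar recursion in $k$ performs exactly this bookkeeping), and the same Young-inequality calibration that yields the $2RD\sqrt{d}$ coefficient on $T^{-(1/2+\beta)}\sum_t\mathbb{E}[\lambda_t^2]$. The only deviations are cosmetic: you drop the initial term via $\gamma_1=1$ and use the exact weights summing to $1$, whereas the paper bounds these via \cref{product} and the $DR(1+\lambda_t)$ Lipschitz estimate, so your resulting constant is slightly smaller than $C_3$ and the stated bound follows a fortiori.
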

\begin{proof}
    Observe first that
    \begin{equation}\label{eq2:regret1}
        \sum_{t=1}^T L_t(\bmc, \lambda_t)- \sum_{t=1}^T L_t(\bmx, \lambda_t)= \sum_{t=1}^T \left(f_t(\bmc)-f_t(\bmx)\right)+\sum_{t=1}^T\lambda_t\left( g_t(\bmc)-g_t(\bmx)\right).
    \end{equation}
    
    Let $t\geq 1$. Note that for any $1\leq k\leq K$,
    \begin{align}\label{eq2:regret2}
        \begin{aligned}
            &f_t(\bm{x_t^{k+1}}) - f_t(\bmx)\\
            &=f_t(\bm{x_t^k}+\gamma_k(\bm{v_t^k}-\bm{x_t^k})) - f_t(\bmx)\\
            &\leq f_t(\bm{x_t^k}) - f_t(\bmx) + \gamma_k \grad f_t(\bm{x_t^k})^\top (\bm{v_t^k}-\bm{x_t^k}) + \frac{L\gamma_k^2R^2}{2}\\
            &=f_t(\bm{x_t^k}) - f_t(\bmx) + \gamma_k \grad f_t(\bm{x_t^k})^\top (\bmx-\bm{x_t^k}) + \gamma_k \grad f_t(\bm{x_t^k})^\top (\bm{v_t^k}-\bmx)+\frac{L\gamma_k^2R^2}{2}\\
            &\leq (1-\gamma_k)\left(f_t(\bm{x_t^k}) - f_t(\bmx)\right) + \gamma_k \grad f_t(\bm{x_t^k})^\top (\bm{v_t^k}-\bmx)+\frac{L\gamma_k^2R^2}{2}
        \end{aligned}
    \end{align}
    where the first inequality holds because $f_t$ is $L$-smooth and $\|\bm{v_t^k}-\bm{x_t^k}\|_1\leq R$ whereas the second inequality follows from the convexity of $f_t$. Then it follows from~\eqref{eq2:regret2} that
    \begin{align}\label{eq2:regret3}
        \begin{aligned}
            f_t(\bmc) - f_t(\bmx)
            &=f_t(\bm{x_t^{K+1}}) - f_t(\bmx)\\
            &\leq \prod_{k=1}^{K}(1-\gamma_k)\left(f_t(\bm{x_t^1}) - f_t(\bmx)\right) \\
            &\quad + \sum_{k=1}^{K}\gamma_k\left( \grad f_t(\bm{x_t^k})^\top (\bm{v_t^k}-\bmx)+\frac{L\gamma_kR^2}{2}\right)\prod_{j=k+1}^{K}(1-\gamma_j)\\
            &=\prod_{k=1}^{K}(1-\gamma_k)\left(f_t(\bm{x_{t-1}}) - f_t(\bmx)\right) \\
            &\quad + \sum_{k=1}^{K}\gamma_k\left( \grad f_t(\bm{x_t^k})^\top (\bm{v_t^k}-\bmx)+\frac{L\gamma_kR^2}{2}\right)\prod_{j=k+1}^{K}(1-\gamma_j)\\
            &\leq \prod_{k=1}^{K}(1-\gamma_k)DR \\
            &\quad + \sum_{k=1}^{K}\gamma_k\left( \grad f_t(\bm{x_t^k})^\top (\bm{v_t^k}-\bmx)+\frac{L\gamma_kR^2}{2}\right)\prod_{j=k+1}^{K}(1-\gamma_j)
        \end{aligned}
    \end{align}
    where the last inequality holds because $f_t$ is $D$-Lipschitz.
    Similarly, we deduce that
    \begin{align}\label{eq2:regret4}
        \begin{aligned}
            &\mathbb{E}\left[\lambda_t\left(g_t(\bmc) - g_t(\bmx)\right)\right]\\
            &\leq\prod_{k=1}^{K}(1-\gamma_k)DR\mathbb{E}\left[\lambda_t\right] \\
            &\quad + \sum_{k=1}^{K}\gamma_k\left(\mathbb{E}\left[ \lambda_t\grad g_t(\bm{x_t^k})^\top (\bm{v_t^k}-\bmx)\right]+\frac{L\gamma_kR^2}{2}\mathbb{E}\left[\lambda_t\right]\right)\prod_{j=k+1}^{K}(1-\gamma_j).
        \end{aligned}
    \end{align}
    Summing up~\eqref{eq2:regret3} and~\eqref{eq2:regret4} for $t=1,\ldots, T$, we obtain 
    \begin{align}\label{eq2:regret5}
        \begin{aligned}
            &\mathbb{E}\left[\sum_{t=1}^T L_t(\bmc, \lambda_t)- \sum_{t=1}^T L_t(\bmx, \lambda_t)\right]\\
            &\leq \sum_{t=1}^T\prod_{k=1}^{K}(1-\gamma_k)DR\left(1+ \mathbb{E}\left[\lambda_t\right]\right)\\
            &\quad + \sum_{t=1}^T\sum_{k=1}^{K}\gamma_k \mathbb{E}\left[\left(\grad f_t(\bm{x_t^k})+\lambda_t\grad g_t(\bm{x_t^k})\right)^\top (\bm{v_t^k}-\bmx)\right]\prod_{j=k+1}^{K}(1-\gamma_j)\\
            &\quad + \sum_{t=1}^T\sum_{k=1}^{K}\frac{L\gamma_k^2R^2}{2}\left(1+\mathbb{E}\left[\lambda_t\right]\right)\prod_{j=k+1}^{K}(1-\gamma_j).
        \end{aligned}
    \end{align}
    Here, 
        \begin{align}\label{eq2:regret6}
            \begin{aligned}
                & \sum_{t=1}^T\prod_{k=1}^{K}(1-\gamma_k)DR\left(1+ \mathbb{E}\left[\lambda_t\right]\right) +\sum_{t=1}^T\sum_{k=1}^{K}\frac{L\gamma_k^2R^2}{2}\left(1+\mathbb{E}\left[\lambda_t\right]\right)\prod_{j=k+1}^{K}(1-\gamma_j)\\
                &\leq \sum_{t=1}^T \frac{4RD}{(K+2)^2}\left(1+\mathbb{E}\left[\lambda_t\right]\right)+ \sum_{t=1}^T \sum_{k=1}^{K}\frac{2R^2L}{(k+1)^2}\left(\frac{k+2}{K+2}\right)^2\left(1+\mathbb{E}\left[\lambda_t\right]\right)\\
                &\leq \sum_{t=1}^T \frac{4RD}{K+2}\left(1+\mathbb{E}\left[\lambda_t\right]\right) + \sum_{t=1}^T \frac{9R^2LK}{2\left(K+2\right)^2}\left(1+\mathbb{E}\left[\lambda_t\right]\right)\\
                &\leq \sum_{t=1}^T \frac{4RD}{T^{\frac{1}{2}+\beta}}\left(1+\mathbb{E}\left[\lambda_t\right]\right) + \sum_{t=1}^T \frac{5R^2L}{T^{\frac{1}{2}+\beta}}\left(1+\mathbb{E}\left[\lambda_t\right]\right)
        \end{aligned}
        \end{align}
    where the first inequality follows from~\cref{product}, the second inequality holds because $K+2 \geq 1$ and $2(k+2)\leq 3(k+1)$, and the third inequality is obtained using $K+2 \geq T^{\frac{1}{2}+\beta}$ and $K\leq T^{\frac{1}{2}+\beta}$. 	Furthermore,
    \begin{align}\label{eq2:regret7}
        \begin{aligned}
            &\sum_{t=1}^T\sum_{k=1}^{K}\gamma_k\prod_{j=k+1}^{K}(1-\gamma_j)\mathbb{E}\left[\left(\grad f_t(\bm{x_t^k})+\lambda_t\grad g_t(\bm{x_t^k})\right)^\top (\bm{v_t^k}-\bmx)\right]\\
            &=\sum_{k=1}^{K}\gamma_k\prod_{j=k+1}^{K}(1-\gamma_j)\sum_{t=1}^T\mathbb{E}\left[\left(\grad f_t(\bm{x_t^k})+\lambda_t\grad g_t(\bm{x_t^k})\right)^\top (\bm{v_t^k}-\bmx)\right]\\
            &=\sum_{k=1}^{K}\gamma_k\prod_{j=k+1}^{K}(1-\gamma_j)\mathbb{E}\left[\sum_{t=1}^Th_t^k(\bm{v_t^k})-\sum_{t=1}^Th_t^k(\bmx)\right]\\
              &\leq\sum_{k=1}^{K}\gamma_k\prod_{j=k+1}^{K}(1-\gamma_j)\mathcal{R}^{\mathcal{E}}(T)\\
            &\leq \sum_{k=1}^{K}\frac{2}{(k+1)}\left(\frac{k+2}{K+2}\right)^2\mathcal{R}^{\mathcal{E}}(T)\\
             &\leq \sum_{k=1}^{K}\frac{3(k+2)}{(K+2)^2}\mathcal{R}^{\mathcal{E}}(T)\\
             &\leq \frac{3}{2}\mathcal{R}^{\mathcal{E}}(T)
        \end{aligned}
    \end{align}
    where the second inequality is from \cref{product}. By~\cref{regret:ftpl},~\eqref{eq2:regret5},~\eqref{eq2:regret6}, and~\eqref{eq2:regret7},
    \begin{align}\label{eq2:regret8}
        \begin{aligned}
            &\mathbb{E}\left[\sum_{t=1}^T L_t(\bmc, \lambda_t)- \sum_{t=1}^T L_t(\bmx, \lambda_t)\right]\\
            &\leq \sum_{t=1}^T \frac{4RD+5R^2L}{T^{\frac{1}{2}+\beta}}\left(1+\mathbb{E}\left[\lambda_t\right]\right) + RD\sqrt{d} \left(3T^{\frac{1}{2}+\beta} +  \frac{1}{T^{\frac{1}{2}+\beta}}\sum_{t=1}^T\mathbb{E}[\lambda_t^2]\right).
        \end{aligned}
    \end{align}
    Observe that
    \begin{align}\label{eq2:regret9} 
\frac{4RD+5R^2L}{T^{{\frac{1}{2}+\beta}}}\lambda_t \leq \frac{RD\sqrt{d}}{T^{\frac{1}{2}+\beta}}\lambda_t^2 +\frac{(4RD+5R^2L)^2}{4RD\sqrt{d}T^{{\frac{1}{2}+\beta}}}=\frac{RD\sqrt{d}}{T^{\frac{1}{2}+\beta}}\lambda_t^2 +\frac{R(4D+5RL)^2}{4D\sqrt{d}T^{{\frac{1}{2}+\beta}}}.
    \end{align}
as $2\sqrt{pq}\leq p+q$ for any $p,q\geq 0$. 
    Based on~\eqref{eq2:regret8} and~\eqref{eq2:regret9},
    \begin{align*}
            \mathbb{E}\left[\sum_{t=1}^T L_t(\bmc, \lambda_t)- \sum_{t=1}^T L_t(\bmx, \lambda_t)\right]
            &\leq \sum_{t=1}^T\left( 4RD+5R^2L +\frac{R(4D+5RL)^2}{4D\sqrt{d}}\right)\frac{1}{T^{\frac{1}{2}+\beta}}\\
            &\quad +3RD\sqrt{d} T^{\frac{1}{2}+\beta} +\frac{2RD\sqrt{d}}{T^{\frac{1}{2}+\beta}}\sum_{t=1}^T\mathbb{E}\left[\lambda_t^2\right]\\
            &\leq C_3T^{\frac{1}{2}+\beta} + \frac{2RD\sqrt{d}}{T^{\frac{1}{2}+\beta}}\sum_{t=1}^T\mathbb{E}\left[\lambda_t^2\right],
    \end{align*}
as required.
\end{proof}

Next, based on the concavity of $L_t(\bmc,\lambda)$ with respect to $\lambda$, we show the following lemma. 

\begin{lemma}\label{lemma2:regret2}
    
    For any $\lambda\geq0$,
    \begin{equation*}
        \mathbb{E}\left[\sum_{t=1}^T L_t(\bmc, \lambda)- \sum_{t=1}^T L_t(\bmc, \lambda_t)\right]\leq  \frac{1}{2\mu}\lambda^2+G^2T\mu +\mu\theta^2\sum_{t=1}^T\mathbb{E}\left[\lambda_t^2\right].
    \end{equation*}
\end{lemma}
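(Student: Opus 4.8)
The plan is to mirror the proof of \Cref{alg1:lemma2} almost verbatim, but now for the per-time-step dual update of \Cref{alg2}: the argument is entirely deterministic (valid for any fixed realization $\bm{\omega_1},\ldots,\bm{\omega_T}$), and the expectation is only taken at the very last step. Recall that the dual update reads $\lambda_{t+1}=\left[\lambda_t+\mu\grad_\lambda L_t(\bmc,\lambda_t)\right]_+$ with $\grad_\lambda L_t(\bmc,\lambda)=g_t(\bmc)-\theta\lambda$. Since the projection $[\,\cdot\,]_+$ onto $\mathbb{R}_{\ge 0}$ is nonexpansive and $\lambda\ge 0$, for every $t$ we have
$$(\lambda_{t+1}-\lambda)^2\le \left(\lambda_t+\mu\grad_\lambda L_t(\bmc,\lambda_t)-\lambda\right)^2=(\lambda_t-\lambda)^2+2\mu\grad_\lambda L_t(\bmc,\lambda_t)(\lambda_t-\lambda)+\mu^2\left(\grad_\lambda L_t(\bmc,\lambda_t)\right)^2.$$

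Next I would invoke the concavity of $\lambda\mapsto L_t(\bmc,\lambda)$ (the term $-\tfrac{\theta}{2}\lambda^2$ is concave and the rest is affine in $\lambda$), which gives $\grad_\lambda L_t(\bmc,\lambda_t)(\lambda_t-\lambda)\le L_t(\bmc,\lambda_t)-L_t(\bmc,\lambda)$. Substituting and rearranging,
$$L_t(\bmc,\lambda)-L_t(\bmc,\lambda_t)\le\frac{1}{2\mu}\left((\lambda_t-\lambda)^2-(\lambda_{t+1}-\lambda)^2\right)+\frac{\mu}{2}\left(g_t(\bmc)-\theta\lambda_t\right)^2.$$
Summing over $t=1,\ldots,T$, the first term on the right telescopes to $\tfrac{1}{2\mu}\left((\lambda_1-\lambda)^2-(\lambda_{T+1}-\lambda)^2\right)\le\tfrac{1}{2\mu}\lambda^2$, using $\lambda_1=0$ and $(\lambda_{T+1}-\lambda)^2\ge 0$. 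For the quadratic term I would use $(g_t(\bmc)-\theta\lambda_t)^2\le 2g_t(\bmc)^2+2\theta^2\lambda_t^2\le 2G^2+2\theta^2\lambda_t^2$ by \Cref{basic-assumption}, so that $\tfrac{\mu}{2}\sum_{t=1}^T(g_t(\bmc)-\theta\lambda_t)^2\le G^2T\mu+\mu\theta^2\sum_{t=1}^T\lambda_t^2$. This produces the pathwise bound $\sum_{t=1}^T\left(L_t(\bmc,\lambda)-L_t(\bmc,\lambda_t)\right)\le\tfrac{1}{2\mu}\lambda^2+G^2T\mu+\mu\theta^2\sum_{t=1}^T\lambda_t^2$; taking expectations over the randomness of $\mathcal{E}^1,\ldots,\mathcal{E}^K$ and the functions yields the claim.

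There is no real obstacle here: this is the classical Zinkevich online-gradient bound applied to the concave dual variable, and the dual update of \Cref{alg2} is structurally identical to that of \Cref{alg1} with block size $K=1$. The only points requiring mild care are the sign bookkeeping in the concavity inequality and checking that splitting $(g_t(\bmc)-\theta\lambda_t)^2$ gives exactly the constant $G^2$ matching the $G^2T\mu$ term in the statement (which is where \Cref{basic-assumption} enters).
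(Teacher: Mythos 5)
Your proposal is correct and follows essentially the same route as the paper: nonexpansiveness of the projection onto $\mathbb{R}_{\geq 0}$, the (strong) concavity of $\lambda\mapsto L_t(\bmc,\lambda)$, the splitting $(g_t(\bmc)-\theta\lambda_t)^2\leq 2G^2+2\theta^2\lambda_t^2$, telescoping with $\lambda_1=0$, and taking expectations at the end. The only (immaterial) difference is that the paper invokes $\theta$-strong concavity and then discards the resulting extra term $-\tfrac{\theta}{2}(\lambda_t-\lambda)^2\leq 0$, whereas you use plain concavity, so the final bounds coincide.
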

\begin{proof}
    Note that
    \begin{align}\label{eq2-2:1} 
        \begin{aligned}
            &(\lambda_{t+1}-\lambda)^2 \\
            &\leq (\lambda_t+\mu \grad_\lambda L_t(\bmc,\lambda_t)-\lambda)^2\\
            &=(\lambda_t-\lambda)^2 +\mu^2(g_t(\bmc) - \theta\lambda_t)^2 + 2\mu \grad_\lambda L_t(\bmc,\lambda_t)^\top(\lambda_{t}-\lambda)\\
            &\leq(\lambda_t-\lambda)^2 +\mu^2(g_t(\bmc) - \theta\lambda_t)^2 - 2\mu \left(L_t(\bmc, \lambda) - L_t(\bmc, \lambda_t)\right)- {\mu\theta}(\lambda_t-\lambda)^2
        \end{aligned}
    \end{align}
    where the first inequality is because $\lambda_{t+1}$ is the projection of $\lambda_t+\mu \grad_\lambda L_t(\bmc,\lambda_t)$, the equality is because $\grad_\lambda L_t(\bmc,\lambda_t)=g_t(\bmc) - \theta\lambda_t$, the second inequality holds because $L_t$ is $\theta$-strongly concave with respect to $\lambda$.
    Note that
    \begin{align}\label{eq2-2:2} 
        \begin{aligned}
            \mathbb{E}\left[(g_t(\bmc) - \theta\lambda_t)^2\right]&\leq \mathbb{E}\left[2(g_t(\bmc))^2 +2 \theta^2\lambda_t^2\right]\\
            &=2\mathbb{E}\left[\mathbb{E}\left[(g_t(\bmc))^2 +2 \theta^2\lambda_t^2\mid\mathcal{F}_{t-1}\right]\right]
            \\&
            \leq 2G^2+2 \theta^2\mathbb{E}\left[\lambda_t^2\right].
        \end{aligned}
    \end{align}
    Combining~\eqref{eq2-2:1} and~\eqref{eq2-2:2},
    \begin{align}\label{eq2-2:3} 
        \begin{aligned}
            &\mathbb{E}\left[(\lambda_{t+1}-\lambda)^2 \right]\\
            &\leq\bbE\left[(\lambda_t-\lambda)^2\right] +\mu^2(2G^2+2\theta^2\lambda_t^2) - \mathbb{E}\left[2\mu \left(L_t(\bmc, \lambda) - L_t(\bmc, \lambda_t)\right)\right]-{\mu\theta}(\lambda_t-\lambda)^2.
        \end{aligned}
    \end{align}
    Then it follows from~\eqref{eq2-2:3} that
    \begin{align*}
        \begin{aligned}
            &\mathbb{E}\left[\sum_{t=1}^T L_t(\bmc, \lambda)- \sum_{t=1}^T L_t(\bmc, \lambda_t)\right]\\
            &\leq\mathbb{E}\left[\sum_{t=1}^T\frac{1}{2\mu}\left((\lambda_t-\lambda)^2-(\lambda_{t+1}-\lambda)^2\right)+\sum_{t=1}^T{\mu}(G^2+\theta^2\lambda_t^2)-\sum_{t=1}^T \frac{\theta}{2}(\lambda_t-\lambda)^2\right]\\
            &\leq\frac{1}{2\mu}(\lambda_1-\lambda)^2+
            G^2T\mu +\sum_{t=1}^T\mu\theta^2\mathbb{E}\left[\lambda_t^2\right]\\
            &=\frac{1}{2\mu}\lambda^2+G^2T\mu +\sum_{t=1}^T\mu\theta^2\mathbb{E}\left[\lambda_t^2\right]
        \end{aligned}
    \end{align*}
    where the last inequality holds due to $\lambda_1=0$. Therefore, the assertion of the lemma holds, as required.
\end{proof}

Based on Lemmas~\ref{lemma2:regret1} and~\ref{lemma2:regret2}, we show the following lemma. 
Recall that 
$$C_1= 4RD+5R^2L +\frac{R(4D+5RL)^2}{4D\sqrt{d}}+3RD\sqrt{d}+\frac{G^2}{12RD\sqrt{d}}= C_3 +\frac{G^2}{12RD\sqrt{d}}.$$

\begin{lemma}\label{lemma2:regret3}
    Let $\gamma_k=2/(k+1)$. Then for any $\bmx\in\mathcal{X}$ and $\lambda\geq 0$,
    \begin{align*}
        &\mathbb{E}\left[\left(\sum_{t=1}^T f_t(\bmc) - \sum_{t=1}^T f_t(\bmx)\right) +\left(\sum_{t=1}^T \lambda g_t(\bmc)-\sum_{t=1}^T\lambda_tg_t(\bmx)\right)\right]\\
        &\leq C_1T^{\frac{1}{2}+\beta} + 24RD\sqrt{d}T^{\frac{1}{2}-\beta}\lambda^2.
    \end{align*}

\end{lemma}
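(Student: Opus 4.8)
The plan is to pass everything through the augmented Lagrangian $L_t(\bmx,\lambda)=f_t(\bmx)+\lambda g_t(\bmx)-\frac{\theta}{2}\lambda^2$ from~\eqref{lagrangian} and then invoke the primal and dual regret bounds already proved. The starting point is the elementary identity, obtained by expanding the definition of $L_t$,
$$f_t(\bmc)-f_t(\bmx)+\lambda g_t(\bmc)-\lambda_tg_t(\bmx)=\left(L_t(\bmc,\lambda)-L_t(\bmx,\lambda_t)\right)+\frac{\theta}{2}\lambda^2-\frac{\theta}{2}\lambda_t^2.$$
Summing over $t\in[T]$, taking expectations, and splitting $L_t(\bmc,\lambda)-L_t(\bmx,\lambda_t)=\left(L_t(\bmc,\lambda)-L_t(\bmc,\lambda_t)\right)+\left(L_t(\bmc,\lambda_t)-L_t(\bmx,\lambda_t)\right)$, I would bound the first piece with~\Cref{lemma2:regret2} and the second with~\Cref{lemma2:regret1}. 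Collecting terms, the left-hand side of the lemma is at most
$$C_3T^{\frac{1}{2}+\beta}+\frac{1}{2\mu}\lambda^2+G^2T\mu+\frac{T\theta}{2}\lambda^2+\left(\frac{2RD\sqrt{d}}{T^{\frac{1}{2}+\beta}}+\mu\theta^2-\frac{\theta}{2}\right)\sum_{t=1}^T\mathbb{E}\left[\lambda_t^2\right].$$

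The next step is to substitute the parameter values from~\eqref{alg2-parameters}, namely $\theta=12RD\sqrt{d}/T^{\frac{1}{2}+\beta}$ and $\mu=1/(\theta(T+2))$. The crucial observation is that $2RD\sqrt{d}/T^{\frac{1}{2}+\beta}=\theta/6$ and $\mu\theta^2=\theta/(T+2)$, so the coefficient of $\sum_t\mathbb{E}[\lambda_t^2]$ equals $\theta\left(\frac{1}{6}+\frac{1}{T+2}-\frac{1}{2}\right)=\theta\left(\frac{1}{T+2}-\frac{1}{3}\right)$, which is $\le 0$ whenever $T\ge 1$; hence that whole term can be discarded. For the remaining terms, $1/(2\mu)=\theta(T+2)/2$ gives $\frac{1}{2\mu}\lambda^2+\frac{T\theta}{2}\lambda^2=\theta(T+1)\lambda^2\le 2\theta T\lambda^2=24RD\sqrt{d}\,T^{\frac{1}{2}-\beta}\lambda^2$ (using $T\ge1$), while $G^2T\mu\le G^2/\theta=\frac{G^2}{12RD\sqrt{d}}T^{\frac{1}{2}+\beta}$. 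Adding these to $C_3T^{\frac{1}{2}+\beta}$ and recalling $C_1=C_3+\frac{G^2}{12RD\sqrt{d}}$ yields precisely the claimed inequality.

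There is no real obstacle here beyond careful bookkeeping: both~\Cref{lemma2:regret1} and~\Cref{lemma2:regret2} inject positive multiples of $\sum_t\mathbb{E}[\lambda_t^2]$ into the bound, and it is the $-\frac{\theta}{2}\sum_t\mathbb{E}[\lambda_t^2]$ term arising from the $-\frac{\theta}{2}\lambda^2$ regularizer in $L_t$ that must absorb them. The parameter calibration --- $\theta$ chosen large relative to $RD\sqrt{d}/T^{\frac{1}{2}+\beta}$ and $\mu$ small relative to $1/(\theta T)$ --- is exactly what forces the net coefficient of $\sum_t\mathbb{E}[\lambda_t^2]$ to be non-positive, and once that cancellation is secured the rest is direct substitution of the step sizes.
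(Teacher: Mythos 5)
Your proposal is correct and follows essentially the same route as the paper: it combines Lemmas~\ref{lemma2:regret1} and~\ref{lemma2:regret2} through the augmented Lagrangian identity, uses the parameter choices in~\eqref{alg2-parameters} to make the net coefficient of $\sum_{t}\mathbb{E}[\lambda_t^2]$ equal to $\theta(\tfrac{1}{6}+\tfrac{1}{T+2}-\tfrac{1}{2})\leq 0$, and then bounds the remaining $\lambda^2$ and $G^2T\mu$ terms exactly as the paper does to arrive at $C_1T^{\frac{1}{2}+\beta}+24RD\sqrt{d}\,T^{\frac{1}{2}-\beta}\lambda^2$.
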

\begin{proof}
    Adding the two inequalities proved in Lemmas~\ref{lemma2:regret1} and~\ref{lemma2:regret2}, we obtain
    \begin{align}\label{eq2-3:1}
        \begin{aligned}
            &\mathbb{E}\left[\sum_{t=1}^T L_t(\bmc, \lambda)- \sum_{t=1}^T L_t(\bmx, \lambda_t)\right]\\
            &\leq C_3T^{\frac{1}{2}+\beta} + \frac{2RD\sqrt{d}}{T^{\frac{1}{2}+\beta}}\sum_{t=1}^T\mathbb{E}\left[\lambda_t^2\right] +\frac{1}{2\mu}\lambda^2+G^2T\mu +\sum_{t=1}^T\mu\theta^2\mathbb{E}\left[\lambda_t^2\right].
        \end{aligned}
    \end{align}
    By~\eqref{eq2-3:1},
    \begin{align*}
        &\mathbb{E}\left[\left(\sum_{t=1}^T f_t(\bmc) - \sum_{t=1}^T f_t(\bmx)\right) +\left(\sum_{t=1}^T \lambda g_t(\bmc)-\sum_{t=1}^T\lambda_tg_t(\bmx)\right)\right]\\
        &\leq C_3T^{\frac{1}{2}+\beta} + G^2T \mu + \left(\frac{1}{2}T\theta+\frac{1}{2\mu}\right)\lambda^2+\sum_{t=1}^T\left(\frac{2RD\sqrt{d}}{T^\beta}+\mu\theta^2 - \frac{\theta}{2}\right)\mathbb{E}\left[\lambda_t^2\right]\\
        &\leq C_3T^{\frac{1}{2}+\beta}+ G^2T \mu + \left(\frac{1}{2}T\theta+\frac{1}{2\mu}\right)\lambda^2\\
&\leq C_3T^{\frac{1}{2}+\beta} + \frac{G^2}{12RD\sqrt{d}}T^{\frac{1}{2}+\beta} + 24RD\sqrt{d}T^{\frac{1}{2}-\beta}\lambda^2\\
&=C_1T^{\frac{1}{2}+\beta} + 24RD\sqrt{d}T^{\frac{1}{2}-\beta}\lambda^2
    \end{align*}
    where the second inequality holds because $\lambda_1=0$ and for $T\geq1$, 
$$\frac{2RD\sqrt{d}}{T^{\frac{1}{2}+\beta}}+\mu\theta^2 - \frac{\theta}{2}=\frac{\theta}{6} + \frac{\theta}{T+2}-\frac{\theta}{2}\leq \frac{\theta}{6} + \frac{\theta}{3}-\frac{\theta}{2}=0,$$
as required.
\end{proof}

Lastly, we need the following lemma.

\begin{lemma}\label{expectation1}
    For any $\{\lambda_t\}_{t=1}^T\subseteq \mathbb{R}_+$ such that $\lambda_t$ is $\mathcal{F}_{t-1}$-measurable for $t\geq 1$, then
    $$\mathbb{E}\left[ \sum_{t=1}^T\lambda_tg_t(\bmo)\right]\leq 0$$
\end{lemma}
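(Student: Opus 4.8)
The plan is a short computation via the law of iterated expectations, relying on only three ingredients: that each $\lambda_t$ is $\mathcal{F}_{t-1}$-measurable (part of the hypothesis), that $g_t(\bmx)=g(\bmx,\bm{\omega_t})$ with $\bm{\omega_t}$ an i.i.d.\ draw independent of $\mathcal{F}_{t-1}$, and that the benchmark $\bmo$ is feasible for~\eqref{problem}, i.e.\ $\bar g(\bmo)\le 0$. No auxiliary lemma is needed.

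First I would split the sum by linearity of expectation, reducing the claim to showing $\mathbb{E}[\lambda_t g_t(\bmo)]\le 0$ for each fixed $t\in[T]$. Conditioning on $\mathcal{F}_{t-1}$, I would write $\mathbb{E}[\lambda_t g_t(\bmo)] = \mathbb{E}\big[\mathbb{E}[\lambda_t g_t(\bmo)\mid\mathcal{F}_{t-1}]\big]$, pull the $\mathcal{F}_{t-1}$-measurable factor $\lambda_t$ outside the inner conditional expectation, and evaluate $\mathbb{E}[g_t(\bmo)\mid\mathcal{F}_{t-1}] = \mathbb{E}[g(\bmo,\bm{\omega_t})\mid\mathcal{F}_{t-1}] = \bar g(\bmo)$, using that $\bm{\omega_t}$ is independent of $\mathcal{F}_{t-1}$ and that $\bmo$ is fixed before $\bm{\omega_t}$ is realized. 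This gives $\mathbb{E}[\lambda_t g_t(\bmo)] = \mathbb{E}[\lambda_t\,\bar g(\bmo)]\le 0$ since $\lambda_t\ge 0$ and $\bar g(\bmo)\le 0$; summing over $t\in[T]$ finishes. I expect to present this as two short displayed chains of (in)equalities.

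The one step requiring care — and the place the argument could break — is the identity $\mathbb{E}[g(\bmo,\bm{\omega_t})\mid\mathcal{F}_{t-1}] = \bar g(\bmo)$, which needs $\bmo$ to behave like a constant given $\mathcal{F}_{t-1}$, i.e.\ to be independent of $\bm{\omega_t}$. This holds because $\bmo$ is an optimal solution of the offline problem~\eqref{problem}, whose feasible region $\{\bmx\in\mathcal{X}:\bar g(\bmx)\le 0\}$ is nonrandom and whose objective does not depend on the constraint samples $\bm{\omega_1},\dots,\bm{\omega_T}$; in particular $\bmo$ may be taken measurable with respect to $\mathcal{F}_0\subseteq\mathcal{F}_{t-1}$, so the conditional expectation collapses to the unconditional $\bar g(\bmo)$. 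Everything else is immediate: feasibility $\bar g(\bmo)\le 0$ comes directly from the definition of $\bmo$, and $\lambda_t\ge 0$ is part of the hypothesis, so no further estimates are involved.
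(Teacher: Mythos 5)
Your proposal is correct and follows essentially the same route as the paper's proof: the tower rule with conditioning on $\mathcal{F}_{t-1}$, pulling out the $\mathcal{F}_{t-1}$-measurable factor $\lambda_t$, evaluating $\mathbb{E}[g_t(\bmo)\mid\mathcal{F}_{t-1}]=\bar g(\bmo)$ by independence of $\bm{\omega_t}$ from the past, and concluding from $\lambda_t\geq 0$ and $\bar g(\bmo)\leq 0$. Your extra remark justifying why $\bmo$ can be treated as a constant (the feasible set $\{\bmx\in\mathcal{X}:\bar g(\bmx)\leq 0\}$ being nonrandom) is a reasonable elaboration of a step the paper states without comment.
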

\begin{proof}
    Note that
    \begin{align*}
        \mathbb{E}[\lambda_tg_t(\bmo)]&=\mathbb{E}\left[\mathbb{E}[\lambda_tg_t(\bmo)\mid\mathcal{F}_{t-1}]\right]\\
        &=\mathbb{E}\left[\lambda_t\mathbb{E}[g_t(\bmo)\mid\mathcal{F}_{t-1}]\right]\\
        &=\mathbb{E}\left[\lambda_t\bar g(\bmo)\right]\\
        &=\mathbb{E}\left[\lambda_t\right]\bar g(\bmo) \\
        &\leq 0
    \end{align*}
    where the first equality comes from the tower rule, the second equality holds because $\lambda_t$ is $\mathcal{F}_{t-1}$-measurable, the third equality follows as $g_t$ is independent of $\{\bm{\omega_1},\ldots,\bm{\omega_{t-1}}\}$, the fourth equality holds because $\bar g(\bmo)$ is a constant, and the last inequality follows from $\lambda_t\geq 0$ and $\bar g(\bmo)\leq 0$. Then
    $$\mathbb{E}\left[ \sum_{t=1}^T\lambda_tg_t(\bmo)\right]=\sum_{t=1}^T\mathbb{E}[\lambda_tg_t(\bmo)]\leq 0,$$
    as required.
\end{proof}

Now we are ready to prove \cref{cmfwg:regret}. 

\begin{proof}[\bf Proof of \cref{cmfwg:regret}]
 By \cref{lemma2:regret3},
    \begin{align}\label{eq2-4:1}
        \begin{aligned}
            &\mathbb{E}\left[\left(\sum_{t=1}^T f_t(\bmc) - \sum_{t=1}^T f_t(\bmx)\right) +\left(\sum_{t=1}^T \lambda g_t(\bmc)-\sum_{t=1}^T\lambda_tg_t(\bmx)\right)\right]\\
            &\leq C_1T^{\frac{1}{2}+\beta} + 24RD\sqrt{d}T^{\frac{1}{2}-\beta}\lambda^2.
        \end{aligned}
    \end{align}
    Here, we set
    $$\lambda = \left(48RD\sqrt{d}T^{\frac{1}{2}-\beta}\right)^{-1}\left[\mathbb{E}\left[\sum_{t=1}^T g_t(\bmc)\right]\right]_+$$
    where $(p)_+=\max\{p,0\}$. Then
    $$\mathbb{E}\left[\sum_{t=1}^T \lambda g_t(\bmc)\right]=\left(48RD\sqrt{d}T^{{\frac{1}{2}-\beta}}\right)^{-1}\left[\mathbb{E}\left[\sum_{t=1}^T g_t(\bmc)\right]\right]_+^2=48RD\sqrt{d}T^{{\frac{1}{2}-\beta}}\lambda^2.$$
    Together with~\eqref{eq2-4:1}, this implies
    \begin{align}\label{eq2-4:2}
\begin{aligned}
    &	\mathbb{E}\left[\sum_{t=1}^T f_t(\bmc) - \sum_{t=1}^T f_t(\bmx)\right]\\
&\leq C_1T^{\frac{1}{2}+\beta}  + \mathbb{E}\left[\sum_{t=1}^T\lambda_tg_t(\bmx)\right]-\frac{1}{96RD\sqrt{d}T^{{\frac{1}{2}-\beta}}} \left[\mathbb{E}\left[\sum_{t=1}^T g_t(\bmc)\right]\right]_+^2.
\end{aligned}
    \end{align}
    In particular, we consider $\bmx = \bmo$. Note that $\lambda_t$ is $\mathcal{F}_{t-1}$-measurable for all $t\geq 1$. Then by Lemma~\ref{expectation1},
    \begin{equation}\label{eq2-4:3}
        \mathbb{E}\left[\sum_{t=1}^T\lambda_tg_t(\bmo)\right]\leq 0
    \end{equation}Then it follows from~\eqref{eq2-4:2} and~\eqref{eq2-4:3} that 
    $$\mathbb{E}\left[\sum_{t=1}^Tf_t(\bmc) - \sum_{t=1}^T f_t(\bmo)\right]\leq C_1T^{\frac{1}{2}+\beta} .$$
    as required. Moreover,
Since $f_1,\ldots, f_T$ are $D$-Lipschitz, it follows from~\eqref{eq2-4:2} that
\begin{align*}
\left[\mathbb{E}\left[\sum_{t=1}^T g_t(\bmc)\right]\right]_+^2\leq 96RD\sqrt{d}T^{{\frac{1}{2}-\beta}}\left(RDT + C_1T^{\frac{1}{2}+\beta} \right).
\end{align*}
Therefore,
\begin{align*}
\mathbb{E}\left[\sum_{t=1}^T g_t(\bmc)\right]\leq C_2 T^{{\frac{3}{4}-\frac{\beta}{2}}}
\end{align*}
where $$C_2=\sqrt{96RD\sqrt{d}\left(RD+C_1\right)},$$
as required.
\end{proof}

\section{Analysis for Stochastic Loss Functions under Strong Duality (\cref{sec:stochastic-loss})}

We first prove Lemmas~\ref{expectation0} and~\ref{expectation2}. Then based on these lemmas, we prove Theorems~\ref{thm:Slater1} and~\ref{thm:Slater2}.

\begin{proof}[\bf Proof of \cref{expectation0}]
Note that
\begin{align*}
    \mathbb{E}\left[\sum_{t=1}^T g_t(\bmc) \right]&=\mathbb{E}\left[\sum_{q=1}^Q\sum_{k=1}^K g_{(q-1)K+k}(\bm{x_{(q-1)K+k}}) \right]\\
    &=\sum_{q=1}^Q\sum_{k=1}^K\mathbb{E}\left[ \mathbb{E}\left[g_{(q-1)K+k}(\bm{x_{(q-1)K+k}}) \mid \mathcal{F}_{(q-1)K+k-1}\right] \right]\\
    &=\mathbb{E}\left[\sum_{q=1}^Q\sum_{k=1}^K\bar g(\bm{x_{(q-1)K+k}})\right]\\
    &=\mathbb{E}\left[\sum_{q=1}^Q\sum_{k=1}^K \bar L(\bm{x_{(q-1)K+k}}, \lambda^*+1) - \sum_{q=1}^Q\sum_{k=1}^K \bar L(\bm{x_{(q-1)K+k}}, \lambda^*)\right]\\
    &\leq\mathbb{E}\left[ \sum_{q=1}^Q\sum_{k=1}^K\bar L(\bm{x_{(q-1)K+k}}, \lambda^*+1) - \sum_{q=1}^Q\sum_{k=1}^K \bar L(\bmo, \lambda_{q})\right]
\end{align*}
where the last inequality follows from~\eqref{saddle-point}.
Moreover, note that
\begin{align*}
&\mathbb{E}\left[\sum_{q=1}^Q\sum_{k=1}^K\left(f_{(q-1)K+k}(\bm{x_{(q-1)K+k}})+(\lambda^*+1)g_{(q-1)K+k}(\bm{x_{(q-1)K+k}})\right)\right]\\
&=\sum_{q=1}^Q\sum_{k=1}^K\mathbb{E}\left[\mathbb{E}\left[f_{(q-1)K+k}(\bm{x_{(q-1)K+k}})+(\lambda^*+1)g_{(q-1)K+k}(\bm{x_{(q-1)K+k}})\mid \mathcal{F}_{(q-1)K+k-1}\right]\right]\\
&=\mathbb{E}\left[\sum_{q=1}^Q\sum_{k=1}^K\left(\bar f(\bm{x_{(q-1)K+k}})+(\lambda^*+1)\bar g(\bm{x_{(q-1)K+k}})\right)\right]\\
&=\mathbb{E}\left[ \sum_{q=1}^Q\sum_{k=1}^K\bar L(\bm{x_{(q-1)K+k}}, \lambda^*+1) \right].
\end{align*}
Similarly, 
\begin{align*}
\mathbb{E}\left[\sum_{q=1}^Q\sum_{k=1}^K\left(f_{(q-1)K+k}(\bmo)+\lambda_qg_{(q-1)K+k}(\bmo)\right)\right]
&=\mathbb{E}\left[ \sum_{t=1}^T \bar L(\bmo, \lambda_q) \right].
\end{align*}
Therefore, the assertion follows, as required.
\end{proof}

\begin{proof}[\bf Proof of \cref{expectation2}]
Note that
\begin{align*}
    \mathbb{E}\left[\sum_{t=1}^T g_t(\bmc) \right]&=\sum_{t=1}^T\mathbb{E}\left[ \mathbb{E}\left[g_t(\bmc)\mid \mathcal{F}_{t-1}\right] \right]\\
    &=\mathbb{E}\left[\sum_{t=1}^T\bar g(\bmc)\right]\\
    &=\mathbb{E}\left[\sum_{t=1}^T \bar L(\bmc, \lambda^*+1) - \sum_{t=1}^T \bar L(\bmc, \lambda^*)\right]\\
    &\leq\mathbb{E}\left[ \sum_{t=1}^T \bar L(\bmc, \lambda^*+1) - \sum_{t=1}^T \bar L(\bmo, \lambda_t)\right]
\end{align*}
where the last inequality follows from~\eqref{saddle-point}.
Moreover, note that
\begin{align*}
\mathbb{E}\left[\sum_{t=1}^T\left(f_t(\bmc)+(\lambda^*+1)g_t(\bmc)\right)\right]&=\sum_{t=1}^T\mathbb{E}\left[\mathbb{E}\left[f_t(\bmc)+(\lambda^*+1)g_t(\bmc)\mid \mathcal{F}_{t-1}\right]\right]\\
&=\mathbb{E}\left[\sum_{t=1}^T\left(\bar f(\bmc)+(\lambda^*+1)\bar g(\bmc)\right)\right]\\
&=\mathbb{E}\left[ \sum_{t=1}^T \bar L(\bmc, \lambda^*+1) \right].
\end{align*}
Similarly, 
\begin{align*}
\mathbb{E}\left[\sum_{t=1}^T\left(f_t(\bmo)+\lambda_tg_t(\bmo)\right)\right]
&=\mathbb{E}\left[ \sum_{t=1}^T \bar L(\bmo, \lambda_t) \right].
\end{align*}
Therefore, the assertion follows, as required.
\end{proof}

As we have shown Lemmas~\ref{expectation0} and~\ref{expectation2}, we are ready to prove Theorems~\ref{thm:Slater1} and~\ref{thm:Slater2}.

\begin{proof}[\bf Proof of \cref{thm:Slater1}]
By~\Cref{thm:alg1}, we know that 
$$\mathbb{E}\left[\sum_{t=1}^T f_t(\bmc) -\sum_{t=1}^T f_t(\bmo)\right]=O\left(T^{\frac{2-\alpha}{3-2\alpha}}\right).$$
Therefore, it suffices to prove the upper bound on the constraint violation.
Note that
\begin{align*}
&\mathbb{E}\left[\sum_{t=1}^T g_t(\bmc) \right]\\
&\leq \mathbb{E}\left[\sum_{q=1}^Q\sum_{k=1}^K\left(f_{(q-1)K+k}(\bm{x_{(q-1)K+k}})+(\lambda^*+1)g_{(q-1)K+k}(\bm{x_{(q-1)K+k}})\right)\right]\\
&\quad -\mathbb{E}\left[ \sum_{q=1}^Q\sum_{k=1}^K\left(f_{(q-1)K+k}(\bmo)+\lambda_qg_{(q-1)K+k}(\bmo)\right)\right]\\
&\leq \frac{1}{2}\left(\frac{1}{\mu} + Q\theta\right)\lambda^{*2} +G^2K^2Q\mu +Q(C_0+C_1D+C_2L)T^{\frac{\alpha}{3-2\alpha}}+\frac{Q}{2}(C_1D+C_2L)T^{\frac{\alpha}{3-2\alpha}}\\
&\leq O\left(T^{\frac{2-\alpha}{3-2\alpha}}\right)
\end{align*}
where the first inequality is due to~\Cref{expectation0}, the second inequality is from~\Cref{alg1:lemma3}, and the last inequality holds because of~\eqref{alg1-parameters}.
\end{proof}

\begin{proof}[\bf Proof of \cref{thm:Slater2}]
By~\Cref{cmfwg:regret}, we know that 
$$\mathbb{E}\left[\sum_{t=1}^T f_t(\bmc) -\sum_{t=1}^T f_t(\bmo)\right]=O\left(\sqrt{T}\right).$$
Therefore, it suffices to prove the upper bound on the constraint violation.
Note that
\begin{align*}
&\mathbb{E}\left[\sum_{t=1}^T g_t(\bmc) \right]\\
&\leq \mathbb{E}\left[\sum_{t=1}^T\left(f_t(\bmc)+(\lambda^*+1)g_t(\bmc)\right) - \sum_{t=1}^T\left(f_t(\bmo)+\lambda_tg_t(\bmo)\right)\right]\\
&\leq C_1T^{\frac{1}{2}+\beta} + 24RD\sqrt{d}T^{\frac{1}{2}-\beta}\lambda^{*2}\\
&\leq O(\sqrt{T})
\end{align*}
where the first inequality is due to~\Cref{expectation2}, the second inequality is from~\Cref{lemma2:regret3}, and the last inequality holds because of~\eqref{alg2-parameters}.
\end{proof}

\end{document}